\documentclass[a4paper,oneside]{amsart}

\def\Wdn#1\wdn{\marginpar{\tiny #1}}
\long\def\WDN#1\wdn{[WDN: #1]\Wdn[Comment]\wdn}


\usepackage{amssymb}
\usepackage{amsmath}
\usepackage{amsfonts}
\usepackage{amsthm}
\usepackage{mathrsfs}
\usepackage{fix-cm}
\usepackage{graphicx}
\usepackage{amscd}

\usepackage{soul}
\sodef\spred{}{.2em}{.9em plus.4em}{1em plus.1em minus.1em}

\usepackage[latin1]{inputenc}
\usepackage[T1]{fontenc}

\usepackage[english]{babel}

\usepackage{bbm}

\usepackage[all]{xy}

\newbox\mybox
\def\overtag#1#2#3{\setbox\mybox\hbox{$#1$}\hbox to
  0pt{\vbox to 0pt{\vglue-#3\vglue-\ht\mybox\hbox to \wd\mybox
      {\hss$\ss#2$\hss}\vss}\hss}\box\mybox}
\def\undertag#1#2#3{\setbox\mybox\hbox{$#1$}\hbox to 0pt{\vbox to
    0pt{\vglue#3\vglue\ht\mybox\hbox to \wd\mybox
      {\hss$\ss#2$\hss}\vss}\hss}\box\mybox}
\def\lefttag#1#2#3{\hbox to 0pt{\vbox to 0pt{\vss\hbox to
      0pt{\hss$\ss#2$\hskip#3}\vss}}#1}
\def\righttag#1#2#3{\hbox to 0pt{\vbox to 0pt{\vss\hbox to
      0pt{\hskip#3$\ss#2$\hss}\vss}}#1}
\let\ss\scriptstyle

\def\Dot{\lower.2pc\hbox to 2.5pt{\hss$\bullet$\hss}}
\def\Circ{\lower.2pc\hbox to 2.5pt{\hss$\circ$\hss}}
\def\Vdots{\raise5pt\hbox{$\vdots$}}
\def\splicediag#1#2{\xymatrix@R=#1pt@C=#2pt@M=0pt@W=0pt@H=0pt}

\newcommand\lineto{\ar@{-}}
\newcommand\dashto{\ar@{--}}
\newcommand\dotto{\ar@{.}}

\newtheorem{thm}{Theorem}[section]
\newtheorem{lemma}[thm]{Lemma}
\newtheorem{prop}[thm]{Proposition}
\newtheorem{cor}[thm]{Corollary}

\newtheorem{thm*}{Theorem}

\theoremstyle{definition} 
\newtheorem{defn}[thm]{Definition} 


\newcommand{\Q}{\mathbbm{Q}}

\newcommand{\Z}{\mathbbm{Z}}


\newcommand{\num}[1]{\lvert #1 \rvert}


\newcommand{\lk}{\operatorname{lk}}

\newcommand{\im}{\operatorname{Im}}

\newcommand{\sign}{\operatorname{sign}}
\newcommand{\lcm}{\operatorname{lcm}}



\newcommand{\morf}[4][\to]{ #2 \colon #3 #1 #4}

\newcommand{\inv}{^{-1}}

\renewcommand{\phi}{\varphi}
\renewcommand{\epsilon}{\varepsilon}

\begin{document}

\bibliographystyle{alpha}

\nocite{*}


\title[Splice diagram, singularity links and universal
abelian covers]
{Splice diagram determining singularity links and universal
abelian covers}
\author{Helge M\o{}ller Pedersen}
\address{Department of Mathematics\\ Columbia University
\\ New York, NY 10027}
\email{helge@math.columbia.edu}
\keywords{surface singularity,
rational homology sphere,
abelian cover, orbifolds}
\subjclass[2008]{32S28, 32S50, 57M10, 57M27}
\begin{abstract}
  To a rational homology sphere graph manifold one can associate a
  weighted tree invariant called splice diagram. In this article we
  prove a sufficient numerical condition on the splice diagram for a
  graph manifold to be a singularity link. We also show that if two
  manifolds have the same splice diagram, then their universal abelian
  covers are homeomorphic. To prove the last theorem we have to
  generalize our notions to orbifolds.
\end{abstract}
\maketitle

\section{Introduction}

For prime $3$-manifolds $M$ one has several decomposition theorems, like
the geometric decomposition which cuts $M$ along embedded tori and
Klein bottles into geometric pieces, or the JSJ decomposition which
cut $M$ along embedded tori into simple and Seifert fibered pieces. A
graph manifold is a manifold that does not have any hyperbolic pieces in
its geometric decomposition, or equivalently only has Seifert fibered
pieces in its JSJ decomposition. To a graph manifold one can associate
several graph invariants, and in this paper we are going to show
properties of one of these called the splice diagram.

Splice diagrams were original introduced in \cite{EisenbudNeumann} and
\cite{Siebenmann} only for manifolds that are integer homology
spheres. Splice diagrams were then generalized to rational homology
spheres in \cite{NeumannWahl3} and used extensively in
\cite{neumannandwahl1}  and \cite{neumannandwahl2}. Our splice diagrams
differs from the ones in \cite{EisenbudNeumann} in that we do not allow
negative weights on edges, and from the ones in \cite{neumannandwahl1},
\cite{neumannandwahl2} and \cite{NeumannWahl3} in that we have
decorations on the nodes, but it is shown in
\cite{neumannandwahl2} that in the case of singularity links their splice
diagrams are the same as ours.   

Now it has long been known that the link of a isolated complex surface
singularity is a graph manifold who has a plumbing diagram with only
orientable base surfaces and negative definite
intersection form. Grauert showed that the inverse is also true. It is
shown in appendix 2 of \cite{neumannandwahl2} that a rational homology
sphere singularity link
has a splice diagram without any 
negative decorations at nodes and has all edge determinants positive.     
We here prove the other direction to get the following theorem.

\begin{thm*}\label{thm1}
Let $M$ be a rational homology sphere graph manifold with splice
diagram $\Gamma$. Then $M$ is a singularity link if and only if
$\Gamma$ has no negative decorations at nodes and all edge
determinants are positive.
\end{thm*}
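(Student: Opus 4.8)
One implication is already available: by Appendix~2 of \cite{neumannandwahl2}, recalled in the introduction, if $M$ is a singularity link then $\Gamma$ has no negative decorations at nodes and all edge determinants positive. The content of the theorem is the converse, and my plan is to reduce it to a purely combinatorial statement about negative-definite plumbing trees and then prove that statement by induction on the number of nodes of $\Gamma$.

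By Grauert's characterization recalled in the introduction, $M$ is a singularity link if and only if it admits a negative-definite plumbing graph, the base surfaces being orientable of genus zero, which is automatic for a rational homology sphere. Negative-definiteness is invariant under Neumann's plumbing calculus on connected plumbing trees: a blow-up inserts a $(-1)$-vertex, and the blow-down of a $(-1)$-vertex of valence at most $2$ is the Schur complement with respect to that vertex; both operations preserve negative-definiteness, and the remaining reduction moves only apply in the presence of a vertex of weight $\ge 0$, which a negative-definite tree cannot contain. Hence $M$ is a singularity link if and only if its Neumann normal-form plumbing tree $G$ is negative definite, and the splice diagram of $G$ is $\Gamma$. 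It therefore suffices to prove: a normal-form plumbing tree of genus zero is negative definite if and only if its splice diagram has no negative node decorations and all edge determinants positive.

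I would prove this by induction on the number of nodes. With no nodes, $M$ is a lens space, a link of a cyclic quotient singularity, and the conditions are vacuous. With one node, $G$ is star-shaped and $M$ Seifert fibered over $S^2$: there are no inter-node edges, the splice weights at the central node are the orders of the singular fibres, the arms in normal form are strings of weights $\le -2$ and hence negative definite, and a direct computation shows that $G$ is then negative definite if and only if the orbifold Euler number $\hat e$ of $M$ is negative, which, with the paper's sign conventions, is equivalent to the decoration at the node being not negative (the value $\hat e = 0$ being impossible since $M$ is a rational homology sphere). For the inductive step I would choose a node $v$ adjacent to exactly one other node $w$, with connecting edge $e = \overline{vw}$, and split $G$ accordingly into the star $S_v$ of $v$, the chain $C$ joining $v$ to $w$, and the remainder containing $w$. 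Since $S_v$ is a full subgraph of $G$, it must be negative definite if $G$ is, and by the one-node case this happens exactly when the decoration at $v$ is not negative. The crux is then a determinant identity for a plumbing tree cut along an edge, $\det(A) = \det(A_1)\det(A_2) - \det(A_1\setminus a)\det(A_2\setminus b)$, applied at the two ends of $C$ and iterated along $C$ and the arms of $v$; rewritten in terms of the splice weights $d_{ve}$, $d_{we}$ and the vertex-deleted determinants that occur, it should yield that, given $S_v$ negative definite, $G$ is negative definite if and only if the edge determinant of $e$ is positive and a certain plumbing tree with one fewer node, realizing the truncation $\Gamma'$ of $\Gamma$ at $e$, is negative definite. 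Applying the inductive hypothesis to the latter completes the argument, and hence the theorem.

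The step I expect to be the main obstacle is exactly this last determinant bookkeeping: showing that, among all the subgraph determinants produced by iterating the cut identity, it is precisely the positivity of the edge determinant of $e$ that decides whether attaching the negative-definite star $S_v$ to the rest through the chain $C$ preserves negative-definiteness; this requires understanding the arithmetic of the determinants of the branch $C\cup S_v$, in effect when that branch can be replaced by a single chain carrying the same data, which is where the edge-determinant hypothesis is genuinely used, together with the degenerate cases where $C$ is short or empty. A secondary, more routine point is to verify that negative-definiteness is preserved under every normalization move, including the absorption of vertices of valence at most $2$ into continued-fraction strings, so that the initial passage to the normal form is legitimate.
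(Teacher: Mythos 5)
Your forward direction and the reduction to ``some plumbing tree for $M$ is negative definite'' are fine, but the heart of your argument --- the inductive step --- is not carried out, and the plan as stated does not obviously close. You propose to decide negative definiteness of $G$ from the cut identity $\det(A)=\det(A_1)\det(A_2)-\det(A_1\setminus a)\det(A_2\setminus b)$, but that identity only controls the top determinant, whereas negative definiteness requires control of all leading principal minors; knowing that the star $S_v$ and a truncated tree are each negative definite and that one more determinant has the right sign does not by itself give negative definiteness of $G$, because the coupling through the chain $C$ enters every intermediate minor. What is actually needed is a Schur-complement elimination of the whole branch $C\cup S_v$ together with an identification of the resulting correction at $w$ with splice-diagram data. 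You yourself flag this as ``the main obstacle'' and leave it unproved, so as it stands this is an outline with a genuine gap at the decisive step. (A secondary omission: your formulas implicitly divide by edge weights, so you need the observation --- which the paper makes first --- that the hypotheses force all edge weights to be nonzero, since a zero weight on an internal edge would give edge determinant $-N_0N_1<0$.)

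This is exactly where the paper takes a different and more economical route. It invokes Theorem 3.1 of \cite{commensurability}, which replaces the full plumbing matrix by the \emph{reduced} plumbing (decomposition) matrix --- one row per Seifert piece, diagonal entries the rational Euler numbers $e(v)$, off-diagonal entries $1/p$ --- and asserts that $M$ is a singularity link iff this small matrix is negative definite. Sections 2--3 of the paper then show that $e(v)$ and the fiber intersection numbers $p$ are computable from $\Gamma$ and $\num{H_1(M)}$ (Proposition \ref{eulernumber} and Corollary \ref{edgedeterminanteq}), after which the induction is a single explicit Schur complement at an end node: the formula $e(v)=-d\epsilon s/(DN)$ makes the sign of the pivot visibly governed by the node sign $\epsilon$ and the edge determinant $D$, and the complemented matrix is recognized as the reduced plumbing matrix of the cut-and-filled manifold $M'$ with one fewer node. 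To make your version work you would either have to reprove the equivalence between negative definiteness of the full plumbing matrix and of the reduced one (essentially the content of the cited theorem), or complete the full-matrix Sylvester bookkeeping along $C$ and the arms of $v$ that you only sketch; until one of these is done the converse direction is not established.
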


Another interesting thing in the study of $3$ manifolds is knowledge
of the abelian covers of the manifold. Since our manifolds are rational
homology spheres their universal abelian covers are finite covers, and
we show that the splice diagram of a manifold determines its universal
abelian cover, more precisely we prove.

\begin{thm*}\label{thm2}
Let $M$ and $M'$ be rational homology sphere graph manifolds with the
same splice diagram $\Gamma$. Then $M$ and $M'$ have isomorphic
universal abelian covers.
\end{thm*}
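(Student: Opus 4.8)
The plan is to build the universal abelian cover $\widetilde{M}\to M$ explicitly, piece by piece along the canonical (JSJ) decomposition, and then to check that every ingredient of the construction is read off from the combinatorics of $\Gamma$ alone, hence is insensitive to the data that distinguishes different manifolds sharing the splice diagram $\Gamma$. Recall that $M$ cuts along its canonical tori into Seifert pieces $M_v$ indexed by the nodes $v$ of $\Gamma$, glued along tori $T_e$ indexed by the edges $e$ joining two nodes; here $M_v$ is Seifert fibred over an orbifold with boundary $\mathcal{O}_v$ whose boundary circles correspond to the edges at $v$ pointing toward another node, and whose cone points have orders the leaf-weights $d_{ve}$ of $\Gamma$ at $v$, while the node decoration records the genus (or non-orientability) of $\mathcal{O}_v$. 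The part of the Seifert data of the pieces that is \emph{not} recorded by $\Gamma$ consists of the "obstruction" invariants of the $M_v$ and of the gluing parameters along the $T_e$ beyond the linking numbers $d_{ve}$; these are precisely the quantities that may differ between $M$ and $M'$.

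First I would pin down the covering data over each piece. Writing $H=H_1(M)$ for the deck group, the restriction $p^{-1}(M_v)\to M_v$ is the (in general disconnected) abelian cover classified by $\pi_1(M_v)\to\pi_1(M)\to H$, and $p^{-1}(T_e)\to T_e$ is classified similarly. Using a van Kampen / Mayer--Vietoris analysis of the splice decomposition together with the known presentation of the first homology of a splice-diagram manifold, I would show that the relevant data --- the image of $\pi_1(M_v)$ in $H$, the classes in $H$ of the regular fibre of $M_v$ and of the meridians of its boundary tori, and the images of the $\pi_1(T_e)$ --- are all functions of $\Gamma$. This in turn identifies, in $\Gamma$-terms, the induced orbifold covering $\widetilde{\mathcal{O}}_v\to\mathcal{O}_v$: its degree, its branch index over each cone point and over each boundary circle, and hence the number of components upstairs together with their genera and numbers of boundary circles.

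The key point, and the step I expect to be the main obstacle, is to show that each $p^{-1}(M_v)$, as a Seifert orbifold over $\widetilde{\mathcal{O}}_v$, is determined by $\Gamma$ up to fibre-preserving homeomorphism, even though the Seifert invariants of $M_v$ itself are not. The mechanism should be that in the abelian cover the "obstruction" terms of $M_v$ fall inside the sublattice over which one is quotienting, so they do not affect the covering space: after passing to the cover one is left with the \emph{unobstructed} Seifert orbifold over $\widetilde{\mathcal{O}}_v$ whose Euler data is pinned down by the splice-diagram weights. Making this precise is exactly where the generalisation to orbifolds is needed --- one must develop abelian (orbifold) covers of Seifert fibrations over orbifolds with boundary, keep track of the class of the regular fibre and of a section, and carry out the Euler-class bookkeeping uniformly over all nodes and over all components of the possibly disconnected covers. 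The interpretation of the splice weights $d_{ve}$ as linking numbers is what forces this computation to come out independently of the chosen $M$.

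Finally I would reassemble. The cover $\widetilde{M}$ is obtained from the pieces $p^{-1}(M_v)$ by gluing along the tori of the $p^{-1}(T_e)$, and the gluing maps are again dictated by $\Gamma$: the weights $d_{ve}$ and $d_{we}$ on an internal edge $e=\{v,w\}$ prescribe the intersection numbers on $T_e$ between fibre curves and section curves of the two sides, hence, after lifting, the gluing on each component of $p^{-1}(T_e)$ up to an ambiguity that is absorbed by fibre-preserving self-homeomorphisms of the pieces. Since applying these four steps to $M'$, which has the same splice diagram $\Gamma$, produces Seifert pieces and gluing tori combinatorially identical to those obtained from $M$, the two resulting graph manifolds are homeomorphic; that is, $\widetilde{M}\cong\widetilde{M'}$.
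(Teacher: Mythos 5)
Your overall architecture (restrict the universal abelian cover to the JSJ pieces, show each restricted cover and the regluing data are read off from $\Gamma$) is in the right spirit, but there is a genuine gap at the very first step, and it propagates. You propose to show that ``the image of $\pi_1(M_v)$ in $H=H_1(M)$, the classes of the fibre and meridians, and the images of the $\pi_1(T_e)$ are all functions of $\Gamma$.'' They cannot be, because $H_1(M)$ itself is \emph{not} determined by the splice diagram: two manifolds with the same $\Gamma$ can have first homology of different orders, and quantities such as the fibre intersection number on $T_e$ (which equals $\num{D}/\num{H_1(M)}$ by the edge determinant equation) and the rational Euler numbers $e_v$ genuinely depend on $d=\num{H_1(M)}$. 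The content of the theorem is that the \emph{induced covers} are nevertheless $\Gamma$-determined because every occurrence of $d$ cancels; establishing this requires identifying the relevant subgroup orders with the Neumann--Wahl ideal generators (which are splice-diagram invariants) and verifying cancellations like $\widetilde{e}_v=\frac{\lambda_v}{d_v\,d}\,e_v$ with $e_v$ equal to $d$ times a $\Gamma$-determined number. Your proposal has no mechanism for this; as written, the ``classification by the map to $H$'' is circular.

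The second gap is the reassembly. Saying the gluing is determined ``up to an ambiguity that is absorbed by fibre-preserving self-homeomorphisms of the pieces'' is precisely the assertion that needs proof: gluings of graph-manifold pieces along tori are not generally absorbed this way, and different identifications of curves on $p\inv(T_e)$ can produce non-homeomorphic total spaces. The paper resolves this by computing the rational Euler number of each lifted Seifert piece via the covering formula $\widetilde{e}=\tfrac{b}{f^2 \cdot (\text{components})}\cdot(\cdots)$, showing it depends only on $\Gamma$, and using it to single out a specific section curve $C_i$ in each lifted boundary torus so that the gluing is ``fibre of one side to $C$ of the other.'' Note also that the paper's actual proof is an induction on the number of nodes: it cuts along a single separating torus and caps the two halves with solid tori, which is exactly why the orbifold generalization is forced (the capped-off spaces need not be manifolds) and why Neumann's Brieskorn-complete-intersection theorem can serve as the base case; your all-pieces-at-once scheme avoids capping but thereby loses both the induction hypothesis and the closed-manifold theorems that make the one-node case computable.
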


In \ref{pre} we define our splice diagrams and prove
several facts we need about them, among other how one gets a
splice diagram from a plumbing diagram of the manifold. In \ref{det}
we show some important lemmas for our proofs, and that the splice
diagram together with the order of the first homology group determines
the decomposition graph of \cite{commensurability}. In \ref{prof} we
then prove the first theorem above. In \ref{graphorbifold} we define
graph orbifolds and show some of their properties, since we need them
in the proof of the second theorem which we prove in \ref{proof2}.   

The author wish to thank Walter Neumann for advice with the article.

\section{Preliminaries on Splice Diagrams}\label{pre}

A splice diagram is a tree which has vertices of valence one, which we
call leaves, and vertices of valence greater than or equal to $3$,
which are called nodes. The end of an edge adjacent to a node of the splice
diagram is decorated with a non negative integer, and each node is
decorated with either a plus or a minus sign; in general one only
writes the minus signs. Here is an example:   

$$\splicediag{8}{30}{
  \Circ&&&&\Circ&\\
  &\oplus  \lineto[ul]_(.25){3}
  \lineto[dl]^(.25){5}
  \lineto[rr]^(.25){22}^(.75){10}&& \ominus
  \lineto[ur]^(.25){7}
  \lineto[dr]_(.25){2}^(.75){6}&&\Circ\\
  \Circ&&&&\oplus\lineto[ur]^(.25){3}\lineto[dr]_(.25){2}&\\
&&&&&\Circ
\hbox to 0 pt{~,\hss} }$$

Given a rational homology sphere graph manifold $M$, one makes the
splice diagram $\Gamma(M)$ by taking a node for each Seifert fibered
piece. Then one attaches an edge between two nodes if they are glued
along a torus, and one adds leaves (vertex attached to a node along a
edge) to each node, one for each singular fiber of the Seifert fibered
piece corresponding to the node.

 The decoration $d_{ve}$ on edge $e$ at node $v$ one gets by
cutting the manifold along the torus corresponding to $e$, gluing
a solid torus in the piece not containing the Seifert fibered piece
corresponding to $v$, by
identifying a meridian of the solid torus with the fiber of in the Seifert
fibered piece and a longitude with a simple closed curve, which is
given by the JSJ decomposition. Then the $d_{ve}$
is the order of the first homology of this 
new manifold. We assign $0$ if the homology is infinite. 

For the decorations on nodes, we need the following definition.
\begin{defn}
Let $L_0,L_1\subset M$ be two knots in a rational homology sphere. Let
$C_1\subset M$ be a submanifold, such that $\partial C_1=d_1L_1$, for
some integer $d_1$. Then
the linking number of $L_0$ with $L_1$ is defined to be
$\lk(L_0,L_1)=\tfrac{1}{d_0}L_0\bullet C_1$. Where $\bullet$ denotes
the intersection product in $M$.
\end{defn} 
To see that $\lk(L_0,L_1)$ is well defined, we need to show that if
$C'_1\subset M$ is a submanifold such that $\partial C_1'=d'_1 L_1$
then $\tfrac{1}{d_0}L_0\bullet C_1=\tfrac{1}{d'_0}L_0\bullet
C'_1$, a $C_1$ always exist since $M$ is a rational homology sphere.
 Since $\partial C'_1=d'_1L_1$ we have that $\partial
(d_1C'_1)=d_1d'_1L_1$, in the same way we have that $\partial
(d'_1C_1)=d_1d'_1L_1$. We can thus form a closed submanifold
$N=d_1C'_1\bigcup_{d_1d_1'L_0}-d'_1C_1$. Since $M$ is a rational
homology sphere then the homology class of $N$ is $0$, so $L_0\bullet
N=0$. But then $0=L_0\bullet
N=L_0\bullet(d_1C'_1\bigcup_{d_1d_1'L_0}-d'_1C_1)=L_0\bullet d_1C'_1-L_0\bullet
d'_1C_1$. Since the intersection product is bilinear we get the
result by dividing by $d_1d'_1$.

To show that $\lk(L_0,L_1)=\lk(L_1,L_0)$, we will define another
notion of linking number to show that this is symmetric and equal to our
first definition.
\begin{defn}
Let $L_0,L_1\subset M$ be knots in a rational homology sphere, let $X$
be a compact $4$-manifold such that $M=\partial X$. Let
$A_0,A_1\subset X$ be submanifolds such that $\partial A_i=d_iL_i$ for
some integers $d_0,d_1$, and such that $A_1$ has zero intersection
with any $2$-cycle in $X$. Then let
$\widetilde{\lk}(L_0,L_1)=\tfrac{1}{d_0d_1}A_0\cdot A_1$, where $\cdot$
denotes the intersection product in $X$.
\end{defn}

$A_i$ exists since $M$ is a rational homology sphere, and we can the
choose $A_i\subset M$. That $A_0$ can be chosen so that it does not
intersect any closed cycles of $X$ follows because a collar
neighbourhood of $M$ has zero second homology, since $H_2((0,1]\times
M)\cong H_2(M)=\{0\}$, and hence we can just choose $A_0\subset
(0,1]\times M$. To show $\widetilde{\lk}(L_0,L_1)$ is well defined we
start by showing that if $A_1'\subset X$ is such that $\partial
A_1'=d_1'L_1$ then
$\tfrac{1}{d_0d_1}A_0\cdot A_1=\tfrac{1}{d_0d'_1}A_0\cdot A'_1$. We
form $N=(d_1'A_1\bigcup_{d_1d_1'L_1}-d_1A_1')$, then $A_0\cdot N=0$
since $N$ is a closed $2$-cycle, and it follows that
$\tfrac{1}{d_0d_1}A_0\cdot A_1=\tfrac{1}{d_0d'_1}A_0\cdot A'_1$ as
above. Now assume $A_0'\subset X$ is such that $\partial A_0'=d_0'L_0$
and $A_0'$ has zero intersection with all $2$-cycles in $X$. To show that
$\tfrac{1}{d_0d_1}A_0\cdot A_1=\tfrac{1}{d'_0d_1}A'_0\cdot A_1$ we can
choose a $A_1\subset X$ such that $A_1$ has zero intersection with any
$2$-cycles, since changing $A_1$ does not change
$\widetilde{\lk}(L_0,L_1)$ as just shown. Then form
$N'=(d_0'A_0\bigcup_{d_0d_0'L_0}-d_0A_0')$. Now $N$ is a $2$-cycle
and by our choice of $A_1$, $N\cdot A_1=0$ it follows that
$\tfrac{1}{d_0d_1}A_0\cdot A_1=\tfrac{1}{d'_0d_1}A'_0\cdot A_1$ and
therefor that $\widetilde{\lk}(L_0,L_1)$ is well defined. 

That $\widetilde{\lk}(L_0,L_1)$ is symmetric is clear since the
intersection product of $2$ dimensional submanifolds of a $4$
dimensional manifold is symmetric, and that it is the first of the two
that has zero intersection with the $2$-cycles does not matter since
we could have chosen both $A_0$ and $A_1$ to have zero intersection
with any $2$-cycles, making the definition symmetric. 
   
\begin{prop}
$\lk(L_0,L_1)=\widetilde{\lk}(L_0,L_1)$
\end{prop}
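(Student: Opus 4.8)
The plan is to show the two linking numbers agree by choosing a convenient $4$-manifold $X$ with $\partial X = M$ and exhibiting submanifolds that compute both invariants simultaneously. Since $\widetilde{\lk}$ is independent of all the choices (as just verified) and is computed using any such $X$, I am free to take $X = (0,1] \times M$ with $M$ identified with $\{1\} \times M$, or more conveniently a manifold $X$ with $H_2(X) = 0$ so that the "zero intersection with $2$-cycles" condition on $A_1$ becomes vacuous. For instance one can cap off the collar: but the cleanest choice is to take $X$ so that $H_2(X;\Q) = 0$, e.g. a rational homology ball bounding $M$ — however such need not exist, so instead I would simply work with $X = (0,1]\times M$ and keep $A_1 \subset (0,1]\times M$ pushed off.

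**First I would** set up the following picture inside $X = (0,1]\times M$. Let $C_1 \subset M = \{1\}\times M$ be a submanifold with $\partial C_1 = d_1 L_1$, as in the first definition; push its interior slightly into $X$ to get a submanifold $A_1$ of $X$ with $\partial A_1 = d_1 L_1$ (meeting $\partial X$ only along $d_1 L_1$). Since $H_2((0,1]\times M) \cong H_2(M) = 0$, this $A_1$ automatically has zero intersection with every $2$-cycle in $X$, so it is a legal choice in the second definition. Next choose $A_0 \subset (0,1]\times M$ a submanifold with $\partial A_0 = d_0 L_0$ — for instance the trace of a rational nullhomology of $L_0$, or again a pushed-in copy of a Seifert-type surface. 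Then by definition $\widetilde{\lk}(L_0,L_1) = \tfrac{1}{d_0 d_1} A_0 \cdot A_1$ computed in $X$.

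**The key step** is then to identify this $4$-dimensional intersection number with the $3$-dimensional one $\tfrac{1}{d_0} L_0 \bullet C_1$ of the first definition. The idea: arrange $A_0$ to be transverse to $\{1\}\times M$ and to meet it exactly in $d_0 L_0$ near the boundary, and arrange $A_1 = $ (a slight push-in of $C_1$) so that the intersections $A_0 \cap A_1$ all occur in a neighborhood of $\{1\}\times M \subset X$; there each transverse intersection point of the surfaces $A_0, A_1$ in the $4$-manifold corresponds bijectively, with matching sign, to a transverse intersection point of the $1$-manifold $L_0 = \tfrac{1}{d_0}\partial A_0$ with the $3$-chain... — more carefully, since $A_1$ is a push-off of $C_1$ and $\partial A_0 = d_0 L_0$ lies in $M$, a standard collar argument shows $A_0 \cdot A_1 = (d_0 L_0) \bullet C_1 = d_0\,(L_0 \bullet C_1)$ in $M$. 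Dividing by $d_0 d_1$ gives $\widetilde{\lk}(L_0,L_1) = \tfrac{1}{d_0}(L_0\bullet C_1) = \lk(L_0,L_1)$.

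**The main obstacle** I expect is making the collar/transversality reduction precise: one must choose the submanifolds so that \emph{all} intersections are confined to the collar $(1-\epsilon,1]\times M$ and then match the $4$-manifold intersection signs with the $3$-manifold ones. This is where care is needed — in particular one should check that pushing $C_1$ off the boundary does not create or cancel intersection points with $A_0$ in the interior, which is exactly guaranteed by the vanishing $H_2((0,1]\times M) = 0$ (so that $A_0$ may also be taken inside the collar and homologically "follows" $L_0$). Once the reduction is set up, the sign-matching is the routine local computation comparing the orientation conventions for $\bullet$ in $M^3$ and $\cdot$ in $X^4$, and well-definedness of both sides (already established) lets us ignore the specific choices made.
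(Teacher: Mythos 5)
Your proposal is correct and is essentially the paper's own argument: the paper also takes $A_0$ to coincide with the cylinder $(0,1]\times d_0L_0$ over $L_0$ in a collar of $\partial X$ and takes $A_1$ to be (a push-in of) $C_1\subset M$, so that the four-dimensional intersection localizes to the collar and equals $d_0\,(L_0\bullet C_1)$. (Only note that $(0,1]\times M$ is not itself a compact $4$-manifold bounding $M$; your construction should be read, as you in effect do, as taking place inside the collar of an arbitrary compact $X$.)
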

\begin{proof}
We choose $A_0$ such that in the collar neighborhood $(0,1]\times M$
we have that $A_0\cap(0,1]\times M=(0,1]\times d_0L_0$, and we choose
$A_1$ such that $A_1\subset M$. Then we get that
$\widetilde{\lk}(L_0,L_1)=\tfrac{1}{d_0d_1}(0,1]\times d_0L_0\cdot
A_1=\tfrac{1}{d_0d_1}\{1\}\times d_0L_0\cdot
A_1=\tfrac{d_0}{d_0d_1}L_0\bullet
A_1=\lk(L_0,L_1)$
\end{proof}
Hence $\lk(L_0,L_1)$ is symmetric.

To finish the decoration of the splice diagram we add at a node $v$ a
sign $\epsilon_v$ corresponding to the sign of the linking number of
two non singular fibers in the Seifert fibration. So by this we can
get that the splice diagram for the Poincare homology sphere is the
following
$$\splicediag{8}{30}{
  \circ&&\circ\\
  &\oplus\lineto[ul]_(.25){2}
  \lineto[ddd]^(.25){3}
   \lineto[ur]^(.25){5}&\\
&&\\
&&\\
  &\circ&\hbox to 0 pt{~,\hss} }$$

An edge between two nodes look like 
$$\splicediag{8}{30}{
  &&&&\\
  \Vdots&\overtag\Circ {v_0} {8pt}\lineto[ul]_(.5){n_{01}}
  \lineto[dl]^(.5){n_{0k_0}}
  \lineto[rr]^(.25){r_0}^(.75){r_1}&& \overtag\Circ{v_1}{8pt}
  \lineto[ur]^(.5){n_{11}}
  \lineto[dr]_(.5){n_{1k_1}}&\Vdots\\
  &&&&\hbox to 0 pt{~,\hss} }$$
and to such edges we assign a number called the edge determinant.

\begin{defn}
The edge determinant $D$ associated to a edge between nodes $v_0$ and
$v_1$ is defined by the following equation. 
\begin{align}\label{defedgedet}
 D=r_0r_1-\epsilon_0\epsilon_1 N_0 N_1.
\end{align}    
where $N_i$ is the product of all the edge weights adjacent to
$v_i$, except the one on the edge between $v_0$ and $v_1$, $r_i$ is
the edge weight adjacent to $v_i$ on the edge between $v_0$ and $v_1$,
and $\epsilon_i$ is the sign on the node $v_i$, if we interpret a
plus sign as $1$ and a minus sign as $-1$.
\end{defn}

One way of getting a splice diagram for a manifold is from a plumbing
diagram for the manifold. Suppose $M$ has a plumbing diagram $\Delta(M)$
which we assume is in normal form, which means that all base surfaces
are orientable and the decorations on strings are less that or equal
to $-2$. This is not quite the same normal form as in \cite{plumbing}
but using the plumbing calculus of that article we can get from
Neumann's normal form to the one we use see \cite{neumann88}. 

We then get a splice diagram $\Gamma(M)$ by taking one node for each
node in $\Delta(D)$, i.e. a vertex with more than 3 edges or genus
$\neq 0$. Since we are only working with rational homology spheres,
every vertex of the  plumbing diagram has genus $= 0$. Connect two
nodes in $\Gamma(D)$ if there is a string between the corresponding
nodes in $\Delta(M)$, and add a leaf at a node in $\Gamma(M)$ for
each string starting at that node in $\Delta(M)$ and not ending at any
node. 

If $\Delta(M)$ is a plumbing of a manifold we denote the intersection
matrix by $A(\Delta(M))$ and let $\det(\Delta(M))=\det(-A(\Delta(M)))$.  

\begin{lemma}
Let $v$ be a node in $\Gamma(M)$, and $e$ be a edge on that node. We
get the weight $d_{ve}$ on that edge by
$d_{ve}=\num{\det(\Delta(M_{ve}))}$, where $M_{ve}$ is the manifold
which has plumbing diagram corresponding to the piece not containing
$v$ if one cuts $\Delta(M)$ just after $v$ on the string corresponding
to $e$.
\end{lemma}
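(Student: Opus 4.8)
The plan is to reduce the statement to the classical fact that the first homology of a plumbed rational homology sphere is presented by the intersection matrix. Recall that if $\Delta$ is a plumbing tree with all base surfaces of genus $0$, then the plumbed $4$-manifold $X(\Delta)$ has $H_2(X(\Delta))\cong\Z^{V}$ with intersection form $A(\Delta)$, and the long exact sequence of the pair $(X(\Delta),\partial X(\Delta))$, together with $\partial X(\Delta)=M(\Delta)$, gives $H_1(M(\Delta))\cong\operatorname{coker}\bigl(A(\Delta)\colon\Z^{V}\to\Z^{V}\bigr)$. Hence $|H_1(M(\Delta))|=|\det(-A(\Delta))|=\det(\Delta)$, with the convention that this number is $0$ exactly when $H_1(M(\Delta))$ is infinite; this is precisely the convention used in the definition of $d_{ve}$.

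Granting this, it suffices to show that the closed $3$-manifold $\widehat{M}$ built in the definition of $d_{ve}$ is homeomorphic to $M_{ve}=M(\Delta(M_{ve}))$. Here $\widehat{M}$ is obtained by cutting $M$ along the torus $T_e$ corresponding to $e$, keeping the side $M^{-}$ not containing the Seifert piece of $v$, and Dehn filling its torus boundary by a solid torus whose meridian is the fibre of $v$'s Seifert piece and whose longitude is the canonical section curve coming from the JSJ decomposition; and $\Delta(M_{ve})$ is the subtree obtained by cutting $\Delta(M)$ just after $v$ in the direction of $e$. Once $\widehat M\cong M(\Delta(M_{ve}))$ is established, we get $d_{ve}=|H_1(\widehat M)|=|H_1(M(\Delta(M_{ve})))|=\det(\Delta(M_{ve}))$, which is the claim.

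To identify $\widehat M$ with $M_{ve}$ I would trace through Neumann's plumbing construction. Cutting $\Delta(M)$ just after $v$ in the direction of $e$ corresponds, in $M=M(\Delta(M))$, to cutting along the plumbing torus $T_e$ incident to the node $v$, which writes $M=M^{+}\cup_{T_e}M^{-}$ with $M^{-}$ the piece carrying the subtree $\Delta(M_{ve})$. By construction $M^{-}$ is exactly $M(\Delta(M_{ve}))$ with an open solid torus removed, namely a fibred neighbourhood of the $S^1$-fibre over a small disk in the base of the first vertex of the string beyond $v$; so the slope along which one must Dehn fill $M^{-}$ to recover $M(\Delta(M_{ve}))$ is the meridian of that neighbourhood, a section curve over the boundary of the disk. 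The point is then the elementary computation built into the definition of plumbing two disk bundles — identify base and fibre disks via the coordinate swap — that this section curve is carried by the plumbing to the $S^1$-fibre of $v$'s Seifert piece, while the fibre of the removed neighbourhood is carried to the section curve singled out by the JSJ decomposition. Hence the canonical cap-off of $M^{-}$ agrees with the one prescribed in the definition of $d_{ve}$, and $\widehat M = M(\Delta(M_{ve})) = M_{ve}$.

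The main obstacle is exactly this last matching of slopes: one must be careful about orientation conventions, about the continued-fraction twisting that occurs along a multi-vertex string between two nodes, and about checking that "the torus corresponding to $e$" may be taken to be any plumbing torus along the string adjacent to $v$ without changing $\widehat M$. Once Neumann's plumbing calculus is set up so that cutting a plumbing diagram just after a node literally corresponds to this Dehn filling, what remains is the homological computation of the first paragraph, which is routine.
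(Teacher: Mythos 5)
Your proposal is correct and follows essentially the same two-step argument as the paper: identify $|\det(-A(\Delta))|$ with $|H_1|$ of the plumbed boundary, and identify the Dehn-filled manifold from the definition of $d_{ve}$ with $M(\Delta(M_{ve}))$. The paper simply asserts the second identification, whereas you spell out the slope-matching through the plumbing construction; that extra care is welcome but not a different route.
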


\begin{proof}
This follows since the absolute value of the determinant of the
intersection matrix of a rational homology sphere graph manifold is
the order of the first homology group. And that the manifold $M_{ve}$
is the manifold corresponding to the manifold one gets by gluing in a
solid torus to the boundary of the piece not containing $v$ after
cutting along the edge corresponding to $e$, by the gluing described
above. 
\end{proof}

\begin{lemma}\label{linkingsign}
Let $v$ be a node in $\Gamma(M)$. Then the sign $\epsilon$ at $v$ , is
$\epsilon=-\sign(a_{vv})$, where $a_{vv}$ is the entry of 
$A(\Delta(M))\inv$ corresponding to the node $v$. 
\end{lemma}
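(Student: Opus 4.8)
The plan is to rewrite $\epsilon$ as the sign of the linking number of two parallel generic fibers of the Seifert piece at $v$, and then to compute that linking number inside the plumbed $4$-manifold using $\widetilde{\lk}$ and the Proposition above. Let $X$ be the $4$-manifold obtained by plumbing according to $\Delta(M)$, so that $M=\partial X$; let $A=A(\Delta(M))$ be its intersection matrix, which is symmetric and, since $M$ is a rational homology sphere, invertible, and write $a_{vw}$ for the entries of $A\inv$. For a vertex $w$ let $\hat E_w\subset X$ be the corresponding plumbing surface (of genus $0$) and let $D_w$ be the transverse normal disc with $\partial D_w=\mu_w$, the meridian of $\hat E_w$. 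The geometric input I would use is the standard fact that for a node $v$ a generic fiber of the Seifert fibration of the corresponding piece of $M$ is isotopic in $M$ to $\mu_v$: it is the $S^1$-fiber of the circle bundle $\partial N_v\to\hat E_v$, hence bounds a fiber of the normal disc bundle $N_v$. The relevant intersection numbers in $X$ are $\hat E_v\cdot\hat E_w=A_{vw}$, $\hat E_v\cdot D_w=\delta_{vw}$ and $D_v\cdot D_w=0$, and the classes $[\hat E_w]$ generate $H_2(X)$.

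Next I would build the bounding submanifolds explicitly. Choose an integer $d$ with $d\,A\inv e_v\in\Z^n$ (for instance $d=\det A$, so that $d\,A\inv e_v$ is a column of the adjugate of $A$), and put $c_w=d\,a_{wv}$. Set
\[
A_1=d\,D_v-\sum_w c_w\,\hat E_w,\qquad A_0=d\,D_v',
\]
where $D_v'$ is a parallel push-off of $D_v$ over a nearby point of $\hat E_v$. Since the $\hat E_w$ are closed, $\partial A_1=d\,\mu_v$ and $\partial A_0=d\,\mu_v'$, with $\mu_v'$ a second generic fiber parallel to $\mu_v$. Moreover, using that $A$ (hence $A\inv$) is symmetric,
\[
A_1\cdot\hat E_u=d\,\delta_{vu}-\sum_w c_w A_{wu}=d\,\delta_{vu}-d\,\delta_{vu}=0\qquad\text{for every }u,
\]
so $A_1$ has zero intersection with every $2$-cycle in $X$. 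Then, by the definition of $\widetilde{\lk}$ together with $D_v'\cdot D_v=0$ and $D_v'\cdot\hat E_w=\delta_{vw}$,
\[
\widetilde{\lk}(\mu_v',\mu_v)=\frac{1}{d^{2}}\,A_0\cdot A_1=\frac{1}{d^{2}}\Bigl(d^{2}\,(D_v'\cdot D_v)-d\sum_w c_w\,(D_v'\cdot\hat E_w)\Bigr)=-\frac{c_v}{d}=-a_{vv}.
\]

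By the Proposition, $\lk(\mu_v',\mu_v)=\widetilde{\lk}(\mu_v',\mu_v)=-a_{vv}$, and since $\mu_v'$ and $\mu_v$ are two non-singular fibers of the Seifert piece at $v$, this gives $\epsilon=\sign\lk(\mu_v',\mu_v)=-\sign(a_{vv})$, as claimed. The point that needs care is the first step: pinning down the identification of a generic Seifert fiber with the meridian $\mu_v$ and fixing orientation conventions so that $\hat E_v\cdot D_v=+1$, which is what guarantees the final sign is $-\sign(a_{vv})$ rather than its opposite. Once that is settled, everything after is linear algebra with $A$ and $A\inv$.
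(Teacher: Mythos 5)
Your proof is correct and takes essentially the same approach as the paper's: both compute $\widetilde{\lk}$ of two nonsingular fibers (identified with meridians of the plumbing surfaces) in the plumbed $4$-manifold, correcting the transverse disc $D_v$ by the combination $\sum_w \det(\Delta(M))\,a_{wv}E_w$ so that it has zero intersection with all $2$-cycles, and then reading off $-a_{vv}$ from the remaining term. The only cosmetic difference is that the paper carries out the computation for a general pair of nodes $v,w$ (obtaining $\widetilde{\lk}(L_v,L_w)=-a_{vw}$, which it reuses later), whereas you specialize to the diagonal case, which is all this lemma needs.
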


\begin{proof}
To prove this we calculate $\widetilde{\lk}(L_v,L_w)$ where $L_v$ is a
nonsingular fiber at the $v$'th node and $L_w$ is a non singular fiber at
the $w$'th node. Let $X$ be the plumbed $4$-manifold given by
$\Delta(M)$. Then each vertex of $\Delta(M)$ corresponds to a circle
bundle over a $2$-manifold in the plumbing so the $i$'th node gives us
a $2$-cycle $E_i$ in $X$, and the collection of all the $E_i$'s
generate $H_2(X)$. The intersection matrix $A(\Delta(M))$ is the
matrix representation for the intersection form on $H_2(X)$ in this
generating set. So to construct a $A_0$ such that it has zero
intersection with all $2$-cycles, we just need that $A_0\cdot E_i=0$
for all $i$. Let $D_v$ and $D_w$ be transverse disk to $E_v$ and $E_w$
with boundaries $L_v$ and $L_w$, if $v=w$ choose them disjoint. Set
$A_0=\det(\Delta(M))D_v-\sum_i\det(\Delta(M))(a_{vi})E_i$, where
$a_{ij}$ is the $ij$'th entry of $A(\Delta(M))\inv$, and choose
$A_1=D_w$. Then $A_0\cdot E_i=0$ for all $i$ and
$\widetilde{\lk}(L_v,L_w)=\tfrac{1}{\det(\Delta(M))}A_0\cdot
D_w=-\tfrac{1}{\det(\Delta(M))}\det(\Delta(M))(a_{vw})E_w\cdot
D_w=-a_{vw}$, since $E_i\cdot D_w=0$ if $i\neq w$ and $E_w\cdot D_w=1$. 
\end{proof}
The proof here is the same as given for proposition 9.1 in
\cite{neumannandwahl4}.

This way of obtaining a splice diagram shows that no edge weight on a
edge to a leaf is $0$, since we assumed that our plumbing diagram
is in normal form, especially that all weights on strings are $\leq -2$,
so a weight on an edge to a leaf is the determinant of a matrix on the form
\begin{align*}
\begin{pmatrix}
b_{11}&-1&0&\dots&0\\
-1&b_{22}&-1&\dots&0\\
0&-1&b_{33}&\dots&0\\
\vdots&\vdots&\vdots&\ddots&\vdots\\
0&0&0&\dots&b_{nn}\\
\end{pmatrix}
\end{align*}
where $b_{ii}\leq 2$, and determinants of such matrices are never $0$.

To prove the theorem we have to introduce another diagram, which we
will call the unnormalized splice diagram $\widetilde{\Gamma}(M)$. 

\begin{defn}
The unnormalized splice diagram $\widetilde{\Gamma}(M)$ is a tree,
with the same graph structure as the splice diagram $\gamma(N)$, but
it has no signs at nodes, and the weights at edges are defined to be
$\tilde{d}_{ve}=\det(\Delta(M_{ve}))$. 
\end{defn}

It is clear that one constructs the unnormalized splice diagram from
the plumbing diagram in the same way as the splice 
diagram, except that for the weight $\tilde{d}_{ev}$ on a edge $e$ at
the node $v$ one does not take the absolute value 
of the $\det(\Delta(M_{ve}))$, but just sets
$\tilde{d}_{ve}=\det(\Delta(M_{ve}))$, and one
does not put any signs at the nodes. 

\begin{lemma}
Let $\widetilde{\Gamma}(M)$ be an unnormalized splice diagram of the
rational homology sphere graph manifold $M$.
Then $\widetilde{\Gamma}(M)$ has the same form as $\Gamma(M)$ as a
graph, and for a node $v$ we get $d_{ve}=\num{\tilde{d}_{ve}}$ and
$\epsilon_v=\sign(\Delta(M))\prod_{e}\sign(\tilde{d}_{ve})$, where the
product is taken over all edges at $v$. 
\end{lemma}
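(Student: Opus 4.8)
The first two assertions require almost nothing. That $\widetilde{\Gamma}(M)$ has the same underlying graph as $\Gamma(M)$ is built into the definition of $\widetilde{\Gamma}(M)$, and $d_{ve}=\num{\tilde d_{ve}}$ follows by combining the earlier lemma ($d_{ve}=\num{\det(\Delta(M_{ve}))}$) with the definition $\tilde d_{ve}=\det(\Delta(M_{ve}))$. So the plan is to concentrate on the formula for $\epsilon_v$.

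I would start from Lemma \ref{linkingsign}, which gives $\epsilon_v=-\sign(a_{vv})$ where $a_{vv}=(A(\Delta(M))\inv)_{vv}$, and then identify $a_{vv}$ by Cramer's rule as $a_{vv}=\det(A(\Delta(M))_{\hat v})/\det(A(\Delta(M)))$, where $A(\Delta(M))_{\hat v}$ denotes the matrix obtained from $A(\Delta(M))$ by deleting the row and column indexed by $v$; note the $(v,v)$-cofactor sign is $(-1)^{v+v}=+1$, so no extra sign enters here. Since $M$ is a rational homology sphere its plumbing graph $\Delta(M)$ is a tree, so deleting the vertex $v$ disconnects the graph into one component for each edge at $v$, and that component is precisely $\Delta(M_{ve})$. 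Hence $A(\Delta(M))_{\hat v}$ is block diagonal with blocks $A(\Delta(M_{ve}))$, and $\det(A(\Delta(M))_{\hat v})=\prod_e\det(A(\Delta(M_{ve})))$, the product over all edges (including leaf edges) at $v$.

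Next I would pass between $\det(A(-))$ and $\det(\Delta(-))=\det(-A(-))$ by keeping track of matrix sizes: if $\Delta(M_{ve})$ has $n_e$ vertices then $\det(A(\Delta(M_{ve})))=(-1)^{n_e}\tilde d_{ve}$, while $\Delta(M)$ has $1+\sum_e n_e$ vertices, so $\det(A(\Delta(M)))=(-1)^{1+\sum_e n_e}\det(\Delta(M))$. Substituting these into the Cramer expression, the factor $\prod_e(-1)^{n_e}=(-1)^{\sum_e n_e}$ in the numerator and the factor $(-1)^{1+\sum_e n_e}$ in the denominator combine to a single $-1$, and (using that a $\pm1$ is its own reciprocal) one obtains $\sign(a_{vv})=-\sign(\Delta(M))\prod_e\sign(\tilde d_{ve})$. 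Therefore $\epsilon_v=-\sign(a_{vv})=\sign(\Delta(M))\prod_e\sign(\tilde d_{ve})$, as claimed.

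The one point needing care is exactly this bookkeeping of powers of $-1$: one must be sure that the pieces $\Delta(M_{ve})$ together with the single vertex $v$ genuinely partition the vertex set of $\Delta(M)$ — this is where the tree property, hence the rational homology sphere hypothesis, is used — so that the exponents appearing in numerator and denominator differ by exactly $1$ and produce precisely the sign that cancels the minus sign from Lemma \ref{linkingsign}. Everything else in the argument is formal.
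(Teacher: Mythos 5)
Your proof is correct, and it is worth noting how it relates to what the paper actually does: the paper disposes of the sign formula in one line by citing the proof of Theorem 12.2 of Neumann--Wahl (which computes $\det(\Delta(M))\cdot(-A(\Delta(M))\inv)_{ij}$ as a product of splice diagram weights), whereas you supply a self-contained derivation of exactly the diagonal case $i=j=v$ of that identity. Your chain --- Lemma \ref{linkingsign} giving $\epsilon_v=-\sign(a_{vv})$, Cramer's rule giving $a_{vv}=\det(A(\Delta(M))_{\hat v})/\det(A(\Delta(M)))$, the tree structure of $\Delta(M)$ (valid since $M$ is a rational homology sphere) giving the block decomposition of $A(\Delta(M))_{\hat v}$ into the $A(\Delta(M_{ve}))$, and the parity bookkeeping $(-1)^{\sum_e n_e}$ versus $(-1)^{1+\sum_e n_e}$ producing the single $-1$ that cancels the one from Lemma \ref{linkingsign} --- is exactly the computation hiding behind the citation, and your observation that the edges of $\widetilde{\Gamma}(M)$ at $v$ correspond bijectively to the edges of $\Delta(M)$ at $v$, with components $\Delta(M_{ve})$, is the point that makes the block decomposition legitimate. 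So this buys a proof that visibly does not need negative definiteness, which is precisely the caveat the paper has to make about the Neumann--Wahl reference. The only degenerate situation is an edge with $\tilde d_{ve}=0$, where $\prod_e\sign(\tilde d_{ve})=0$ and simultaneously $a_{vv}=0$; the statement is then vacuous rather than wrong, and this is an issue with the formulation shared by the paper, not a gap in your argument.
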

\begin{proof}
That the graph has the same form and
$d_{ve}=\num{\tilde{d}_{ve}}$ is clear from the constructions.
The last follows from the proof of theorem 12.2 in
\cite{neumannandwahl2}. In their theorem they assume negative definite
intersection form, but if one looks at the proof one sees that for the
part we need it is not necessary to assume negative definiteness.
\end{proof}   

If we have a edge between two nodes in a unnormalized splice diagram
that look likes this 

$$\splicediag{8}{30}{
  &&&&\\
  \Vdots&\overtag\Circ {v_0} {8pt}\lineto[ul]_(.5){\tilde{n}_{01}}
  \lineto[dl]^(.5){\tilde{n}_{0k_0}}
  \lineto[rr]^(.25){\tilde{r}_0}^(.75){\tilde{r}_1}&& \overtag\Circ{v_1}{8pt}
  \lineto[ur]^(.5){\tilde{n}_{11}}
  \lineto[dr]_(.5){\tilde{n}_{1k_1}}&\Vdots\\
  &&&&\hbox to 0 pt{~,\hss} }$$
Then we the define the unnormalized edge determinant $\widetilde{D}$
associated to a edge to be
\begin{align}\label{defedgedet}
 \widetilde{D}=\tilde{r}_0\tilde{r}_1-\widetilde{N}_0 \widetilde{N}_1
\end{align}  
where $\widetilde{N}_i=\prod_{j+=}^{k_i}\tilde{n}_{ij}$.
Then it's clear that $D=\sign(\tilde{r}_0)\sign(\tilde{r}_1)\widetilde{D}$.  

We are also going to need what is called a maximal splice diagram, it
is a tree with integer weights on edges leaving vertices.

\begin{defn}
The maximal splice diagram of a manifold $M$ with plumbing diagram
$\Delta(M)$ has the underlying graph the graph of the plumbing
diagram. On edges one adds decorations  as in the construction of a
unnormalized splice diagram from the plumbing diagram $\delta(M)$.
\end{defn}

To get a unnormalized splice diagram from a maximal splice diagram, one
just removes the vertices of valence two and removes the decoration on
edges next to vertices of valence one.

An edge in our splice diagram between nodes $v_0$ and $v_1$
corresponds to a torus $T^2$ which the pieces corresponding to the
nodes are glued along. In that
torus we get several natural knots from the Seifert fibered structure
on each side, namely a fiber $F_i$ and a section $S_i$ from the
fibration of the piece corresponding to $M_i$. We are going to be
interested in the fiber intersection of $F_0$ with $F_1$ in the torus
$T^2$. We make the following convention, if we write $F_0\cdot F_1$ we
mean the intersection product in $T^2$, where $T^2$ is oriented as the
boundary of $M_0$ and when we write $F_1\cdot F_0$ we mean the
intersection product in $T^2$ oriented as the boundary of $M_1$. In
this way $F_0\cdot F_1=F_1\cdot F_0$, since we change the orientation
on $T^2$ when we interchange $F_0$ and $F_1$. 

Because $M$ is a rational homology sphere, the diagram is a tree.
It is then possible to orient the $F_i$'s and $S_i$'s such that the intersection
number of the fibers $F_i$ and $F_j$ from the Seifert fibered pieces
on each side of a separating torus is always positive, and so that
$F_i\cdot S_i=1$. It should be mentioned
that $S_i$ is only well defined up to adding a multiply of $F_i$, for
the case of orienting the $F_i$'s and the $S_i$'s, it does not mater. 

One does this be choosing a Seifert fibered piece
corresponding to a leaf and choose a orientation on the piece. This
then gives an orientation on the fiber in the boundary
of that piece and we the choose the right orientation of the
section. Choose a orientation on the piece glued along the torus, such
that the fiber intersection number is positive, and choose the right
orientation on the section. Then continue to do the same in the other
boundary pieces of this second Seifert fibered piece. We can then get
all the fibers and sections oriented this way, since $\Gamma(M)$ is a tree.  

We will always assume our fibers and sections are oriented this way. 

\section{Determine the decomposition graph from a splice diagram}\label{det}

Given a graph 3 manifold $M$ there is another graph invariant one can
associate to the JSJ decomposition of $M$ called the decomposition
graph. We will in this section show that given the splice diagram of a
manifold and the order of its first homology group, one can construct
the decomposition graph of that manifold.

The decomposition graph has one node for each Seifert fibered piece of
$M$, and a edge between nodes if they are glued by a torus. At node $v$
one puts 2 weights, the first is the rational euler number $e_v$ and
the other number is the orbifold euler characteristic of
the base $\chi^{orb}_v$. If the Seifert fibered piece corresponding to
the node $v$ has Seifert invariant
$M(g;(\alpha_1,\beta_1),\dots,(\alpha_k,\beta_k))$ then 
\begin{align}
e_v=-\sum_{i=1}^k\frac{\beta_i}{\alpha_i}
\end{align}
and
\begin{align}
\chi^{orb}_v=\chi_v-\sum_{i=1}^k(1-\frac{1}{\alpha_i})\label{orbifoldeuler}
\end{align}
where $\chi_v$ is the euler characteristic of the base surface. 
This formula is in fact only true for closed Seifert fibered spaces,
if the space has boundary, one needs additional information. The
additional information is a simple closed curve in each of the
boundary components, which we get
from the JSJ decomposition, by at each piece of the boundary take a
the curve corresponding to a fiber from the other side. Then one closes
the manifold by gluing in solid tori in the boundary pieces, by gluing
a meridian to the closed curve. Finally take
$e(v)$ to be the rational euler number of this closed manifold.

One weights an edge $e$ of the decomposition graph by the the intersection
number in $T$ of a nonsingular fibers of the Seifert fibrations on each side of
the torus $T$ corresponding to $e$. 

One gets the graph structure of the decomposition graph of $M$ from the splice
diagram $\Gamma(M)$ of $M$ by removing all leaves, i.e. by removing
all vertices of valence one and the edges leading to them. It is clear
that since nodes in the $\Gamma(M)$ corresponds to Seifert fibered
pieces in the JSJ-decomposition of $M$, and a edge between two nodes
means there are glued along a torus, that the result has the shape of the
decomposition graph. 

We start by given a formula for the orbifold euler characteristic
\begin{prop}\label{orbifoldeulerform}
Let $v$ be a node in the splice diagram $\Gamma(M)$ of the manifold
$M$. Then
\begin{align}
\chi^{orb}_v=2-n(v)+\sum_e\frac{1}{d_{ve}}
\end{align}
where $n(v)$ is the valence of $v$ and the sum is taken over all edges
leading to leaves.
\end{prop}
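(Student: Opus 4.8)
The plan is to compute $\chi^{orb}_v$ directly from the Seifert invariants of the piece $M_v$ corresponding to the node $v$, using the formula \eqref{orbifoldeuler}, and then to identify the quantities appearing there with splice-diagram data. Since $M$ is a rational homology sphere the base surface of $M_v$ is a sphere (genus $0$), so $\chi_v$ equals $2$ minus the number of boundary components of the base, and each boundary component of the base corresponds either to an edge of $\Gamma(M)$ joining $v$ to another node or to a puncture that we then cap off with a solid torus. Thus the first obstacle is bookkeeping: writing $n(v)$ as (number of edges to nodes) $+$ (number of edges to leaves), and separately accounting for how many of the singular fibers of the \emph{closed-up} Seifert piece come from genuine singular fibers of $M_v$ versus from the solid tori glued in along the non-leaf boundary tori. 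Once this is sorted, \eqref{orbifoldeuler} becomes $\chi^{orb}_v = 2 - (\text{number of non-leaf edges at }v) - \sum (1 - 1/\alpha_i)$, where the sum ranges over the multiplicities $\alpha_i$ attached to all boundary tori of $M_v$, both the leaf ones and the glued-in ones.

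The heart of the argument is then to show that, for each edge $e$ at $v$ (leaf or not), the multiplicity $\alpha_i$ of the corresponding singular/exceptional fiber in the closed-up piece is exactly $d_{ve}$. For an edge to a leaf this is essentially the definition of the leaf decoration in the plumbing-to-splice translation, but for safety I would re-derive it from the preceding lemma expressing $d_{ve} = |\det(\Delta(M_{ve}))|$ and the standard fact that the order of $H_1$ of the lens-space-type local picture is the multiplicity. For an edge $e$ joining $v$ to another node $w$: cutting $\Delta(M)$ just after $v$ on the string corresponding to $e$ and gluing in a solid torus produces a Seifert fibered space whose orbit invariant at that boundary is precisely the one governing $d_{ve}$, again by the lemma identifying $d_{ve}$ with $|\det(\Delta(M_{ve}))|$ together with the description of $M_{ve}$ as the glued-up complementary piece. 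So in both cases the contribution is $1 - 1/d_{ve}$.

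Plugging back in: letting $L(v)$ be the set of leaf edges and $E(v)$ the set of non-leaf edges, with $n(v) = |L(v)| + |E(v)|$, we get
\begin{align*}
\chi^{orb}_v &= 2 - |E(v)| - \sum_{e\in L(v)}\Bigl(1-\tfrac{1}{d_{ve}}\Bigr) - \sum_{e\in E(v)}\Bigl(1-\tfrac{1}{d_{ve}}\Bigr)\\
&= 2 - |E(v)| - |L(v)| + \sum_{e\in L(v)}\tfrac{1}{d_{ve}} + \sum_{e\in E(v)}\tfrac{1}{d_{ve}} - \Bigl(|E(v)| - |E(v)|\Bigr).
\end{align*}
Here I must be careful: the non-leaf edges contribute $1 - 1/d_{ve}$ to the singular-fiber sum but their ``$1$'' is already counted inside $n(v)$, so the net effect is that each non-leaf edge removes $1/d_{ve}$ rather than $1$; recombining, the $|E(v)|$ terms cancel correctly and one is left with $\chi^{orb}_v = 2 - n(v) + \sum_{e\in L(v)} 1/d_{ve} + \sum_{e\in E(v)} 1/d_{ve}$. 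The final step is to observe that for a non-leaf edge the extra $1/d_{ve}$ should not be there in the claimed formula, which forces the check that the glued-in solid tori along non-leaf boundaries are in fact \emph{non-singular} fibers (multiplicity $1$) of the closed-up piece — equivalently that the closing-up curve specified by the JSJ data is the one making that fiber ordinary — so that $\sum_{e\in E(v)} 1/d_{ve}$ is actually $\sum_{e\in E(v)} 1 = |E(v)|$ and cancels. Thus the main obstacle I anticipate is precisely this orientation/framing verification — pinning down exactly which curve gets killed when we cap off the non-leaf boundary tori, and confirming it yields an ordinary fiber — after which the identity drops out of \eqref{orbifoldeuler} by pure arithmetic.
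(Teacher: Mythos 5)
The leaf-edge half of your argument is fine and is essentially the paper's: the only cone points of the base orbifold are the genuine singular fibers of the piece corresponding to $v$, these biject with the leaf edges at $v$, and for such an edge $d_{ve}=\num{\det(\Delta(M_{ve}))}$ is the numerator $\alpha$ of the continued fraction of the string, i.e.\ the multiplicity of that fiber. (The paper does this by explicitly diagonalizing the tridiagonal minor and citing Corollary 5.7 of \cite{plumbing}; your appeal to the lens-space local picture is the same computation, just deferred.)

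The gap is in your treatment of the non-leaf edges, precisely where you locate your ``main obstacle,'' and the verification you propose there would fail. First, your intermediate claim that for a non-leaf edge the relevant multiplicity is $d_{ve}$ is wrong: $d_{ve}$ is the order of $H_1$ of the \emph{entire} complementary piece (which contains other nodes), whereas the multiplicity of the core of a solid torus glued in along the fiber from the other side is the intersection number in $T^2$ of the two fibers, i.e.\ the fiber intersection number $p$. Second, your proposed patch --- that this capped-off fiber is ordinary, of multiplicity $1$ --- is also false in general: by Corollary \ref{edgedeterminanteq}, $p=\num{D}/\num{H_1(M)}$, which can be any positive integer. The actual resolution is that no capping off enters the definition of $\chi^{orb}_v$ at all: it is the orbifold Euler characteristic of the base orbifold of the Seifert piece \emph{with boundary} (the closing-up described in the paper is used only to define the rational Euler number $e_v$). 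Hence each non-leaf edge contributes $-1$ through a boundary circle of the base, giving $\chi_v=2-r(v)$, and contributes nothing to the cone-point sum; your first display, which subtracts $|E(v)|$ from $\chi_v$ \emph{and} subtracts $(1-1/d_{ve})$ over $e\in E(v)$, double-counts these edges, which is why your algebra does not close. With the correct bookkeeping the identity is immediate: $\chi^{orb}_v=2-r(v)-\sum_{e\in L(v)}(1-1/d_{ve})=2-n(v)+\sum_{e\in L(v)}1/d_{ve}$.
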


\begin{proof}
Since each leaf of the node $v$ corresponding to the Seifert fibered
piece corresponds to a singular fiber, and a singular fiber corresponds
to a leaf at $v$, so taking the sum in \eqref{orbifoldeuler} over
singular fibers is the same as taking the sum over edges at $v$
leading to leaves. The negative intersection matrix
$-A(\Delta(M_{ve}))$ has numbers $b_i\geq 2$ on the diagonal and $-1$
adjacent to diagonal entries and $0$ elsewhere since $e$ leads to a
leaf. We can then diagonalize $-A(\Delta(M_{ve}))$ only by adding rows
and columns in the following way. If the matrix is $n$ by $n$ we clear
the $1$ at the $(n,n-1)$ entry by adding $-\frac{1}{a_{nn}}$ times the
$n$'th row to the $n-1$'st row. Then we clear the $1$ at $(n-1,n)$ by
adding $-\frac{1}{a_{nn}}$ times the
$n$'th column to the $n-1$'st column. We now have that in the $n$'th
row and $n$'th column, only the diagonal entry is now zero. We then
proceed to clear the $(n-1,n-2$ and $(n-2,n-1)$ entries the same
way. This then continues until the matrix is diagonal.

If we do this we get that the $ii$'th entry of
$-A(\Delta(M_{ve}))$ is $[b_i,b_{i-1},\dots ,b_1]$ which is the
continued fraction 
\begin{align}
[b_i,b_{i-1},\dots ,b_1]=b_i-\cfrac{1}{b_{i-1}-\cfrac{1}{b_{i-2}-\dots}}.
\end{align}
Then $d_{ve}=\num{\det(\Delta(M_{ve}))}=\num{[b_n,b_{n-1}\dots
  ,b_1][b_{n-1},b_{n-2},\dots ,b_1]\cdots [b_1]}$. The continued
fraction $[b_1]=b_1$ and the denominator of $[b_i,,b_{i-1},\dots
,b_1]$ is the numerator of $[b_{i-1},,b_{i-2},\dots ,b_1]$. This
implies that $d_{ve}$ is equal to the numerator of $\num{[b_n,,b_{n-1},\dots
,b_1]}$. It follows from corollary 5.7 in \cite{plumbing}
that the numerator of $[b_n,b_{n-1},\dots ,b_1]$ is equal to $\alpha$,
where $\alpha$ is the first part of the Seifert
invariant of the singular fiber corresponding to the leaf at $e$. So
$d_{ve}=\alpha$ since $\alpha>0$.
We now have that
\begin{align}
\chi^{orb}_v=\chi_v-\sum_e(1-\frac{1}{d_{ve}})=\chi_v-l(v)+\sum_e\frac{1}{d_{ve}}.
\end{align}
where $l(v)$ is the number of singular fibers, which is the same as the
number of leaves at $v$. The base surface is a sphere since $M$ is a
rational homology sphere, so $\chi_v=2-r(v)$ where $r(v)$ is the
number of boundary components which is the same as the the number of
edges leading to other nodes. The formula the follows since $l(v)+r(v)=n(v)$. 
\end{proof}

We next proves a lemma relating the fiber intersection number to the
edge determinant.
\begin{lemma}[Unnormalized edge determinant
  equation]\label{nedgedeterminanteq} Assume that we have an edge in
  our splice diagram between two nodes. Let $T$ be the torus
  corresponding to the edge and $p$ the intersection number in $T$ of Seifert
  fibers from each of the sides of $T$. Let $d=\det(\Delta(M))$, then 
\begin{align}
p=\frac{\widetilde{D}}{d}
\end{align}
\end{lemma}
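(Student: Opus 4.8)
The plan is to compute the fibre intersection number $p$ inside the plumbed $4$-manifold $X$ with $\partial X=M$, in the same spirit as the proof of Lemma~\ref{linkingsign}, and then to match the resulting expression against $\widetilde D/d$ by a purely linear-algebraic manipulation of the intersection matrix $A(\Delta(M))$.

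First I would set up the torus $T$ concretely. In the plumbing graph the edge $e$ between $v_0$ and $v_1$ is a string $v_0,u_1,\dots,u_m,v_1$ of vertices (the $u_i$ of valence two, and $m$ possibly $0$), and $T$ may be taken so that $F_0$ is a regular fibre of $v_0$'s Seifert piece lying on it and $F_1$ is the regular fibre of $v_1$'s Seifert piece transported across the $T^2\times[0,1]$ corresponding to the string $u_1,\dots,u_m$. The key elementary observation is that for two curves lying on a separating torus $T\subset M$, the intersection number $F_0\cdot F_1$ in $T$ equals, up to a sign fixed once and for all by the orientation conventions of Section~\ref{pre}, the difference $\lk_M(F_0^{+},F_1)-\lk_M(F_0^{-},F_1)$, where $F_0^{\pm}$ are the two push-offs of $F_0$ to the two sides of $T$; indeed the annulus $F_0\times[-1,1]$ in a bicollar of $T$ is a $2$-chain bounding $F_0^{+}-F_0^{-}$ and it meets $F_1$ in exactly $F_0\cdot F_1$ points. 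One of these push-offs is a regular fibre $F_{v_0}$ of $v_0$'s Seifert piece, and the other is isotopic in $M$ to a normal circle of one of the exceptional curves along the string; hence, running the argument of Lemma~\ref{linkingsign} for both push-offs, $p$ gets expressed as a difference of two entries of $A(\Delta(M))^{-1}$.

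It then remains to show that this difference of entries of $A(\Delta(M))^{-1}$ equals $(\tilde r_0\tilde r_1-\widetilde N_0\widetilde N_1)/d$ with $d=\det(\Delta(M))$. This is where I expect the real work to be. By Cramer's rule each entry of $A(\Delta(M))^{-1}$ is a cofactor of $A(\Delta(M))$ divided by $\det A(\Delta(M))$, and because $\Delta(M)$ is a tree with $-1$'s on its edges these cofactors factor as products of determinants of the connected components obtained after deleting a path, with no sign surprises. The components that appear are precisely the sub-plumbings computing the unnormalized edge weights $\tilde d_{v_ie}=\tilde r_i$ and $\tilde d_{v_ie'}$ for the other edges $e'$ at $v_i$, whose product over $e'$ is by definition $\widetilde N_i$; reorganising the resulting product of subtree determinants, and using the relation $\det(\Delta(M))=\pm\det(A(\Delta(M)))$, yields $\tilde r_0\tilde r_1-\widetilde N_0\widetilde N_1$ over $d$, i.e.\ $\widetilde D/d$. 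The main obstacle, and the point needing the most care, is precisely this translation: identifying the relevant curves on $T$ with normal circles of the correct exceptional curves (the string of $(\le -2)$-vertices and their Euler-number twists intervene here), and then keeping every sign straight so that $p$, which is positive by the orientation conventions of Section~\ref{pre}, comes out with the correct sign. This is exactly why the lemma is phrased with the unnormalized quantities $\widetilde D$, $\tilde d_{ve}$ and $\det(\Delta(M))$ rather than their absolute values: taking absolute values too early would destroy the cofactor bookkeeping. (A purely three-dimensional alternative, computing $p$ directly from the continued-fraction gluing matrix of the string together with the Seifert data at $v_0$ and $v_1$, should also work and would make the combinatorics explicit, but the sign tracking there is no easier.)
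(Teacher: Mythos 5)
Your reduction of $p$ to a difference of linking numbers of the two push-offs of $F_0$ is correct and is a legitimate alternative starting point, and the final cofactor computation you sketch is essentially the Neumann--Wahl product formula for the entries of $A(\Delta(M))^{-1}$ that the paper also relies on. But the middle step contains a genuine error: the push-off $F_0^-$ of the regular fibre of $v_0$'s piece to the far side of $T$ is \emph{not} isotopic to a normal circle of any exceptional curve along the string. In the local model $D^2\times D^2$ of the plumbing at the intersection of $E_{v_0}$ with the next curve $E_{u_1}$, the fibre of $v_0$'s circle bundle restricted to $T=\partial D^2\times\partial D^2$ is the meridian of $E_{v_0}$, and on the other side it becomes a \emph{section} of $u_1$'s circle bundle over a boundary circle of its base --- not the meridian of $E_{u_1}$. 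Consequently $\lk(F_0^-,F_1)$ is not a single entry of $A(\Delta(M))^{-1}$, and the claimed ``difference of two entries'' computes the wrong quantity: already for two adjacent nodes ($m=0$) your recipe yields $\pm(a_{v_1v_1}-a_{v_0v_1})=\pm N_1(\widetilde{N}_0-\tilde{r}_1)/d$ rather than $(\tilde{r}_0\tilde{r}_1-\widetilde{N}_0\widetilde{N}_1)/d$; in the paper's own example of two $(-3)$-vertices each with two $(-2)$-neighbours this gives $\mp 16/d$ instead of the correct $48/d$. Expressing the section class of $F_0^-$ in terms of normal circles is precisely the Euler-number/continued-fraction bookkeeping you relegate to a closing parenthesis; without it the argument does not close.

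For comparison, the paper sidesteps this issue entirely: it introduces the two curves $L_i\subset T$ generating $\ker\bigl(H_1(T;\Q)\to H_1(M_i;\Q)\bigr)$, writes both fibres $H_0,H_1$ in the basis $\{L_0,L_1\}$, and then uses the three linking numbers $\lk(H_0,H_0)$, $\lk(H_1,H_1)$, $\lk(H_0,H_1)$ --- each between genuine fibres, hence each a single entry of $A(\Delta(M))^{-1}$ given by the product-of-branch-weights formula --- as three equations from which $H_0\cdot H_1$ is extracted algebraically. It then needs a separate polynomial-identity argument (varying diagonal entries of the intersection matrix) to cover the case where some adjacent weight $n_{ij}$ vanishes, since the algebraic elimination divides by $N_0N_1$; your write-up does not address this degenerate case either, though a corrected version of your route might avoid the case split. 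If you want to pursue your approach, the essential missing lemma is an expression for the class of $F_0^-$ in $H_1(M\setminus F_1)$ as a $\Z$-combination of the normal circles $L_w$ on the $v_1$-side, which is where all the string data enters.
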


\begin{proof}
Let the numbers on the edge be as in

$$\splicediag{8}{30}{
  &&&&\\
  \Vdots&\overtag\Circ {v_0} {8pt}\lineto[ul]_(.5){n_{01}}
  \lineto[dl]^(.5){n_{0k_0}}
  \lineto[rr]^(.25){r_0}^(.75){r_1}&& \overtag\Circ{v_1}{8pt}
  \lineto[ur]^(.5){n_{11}}
  \lineto[dr]_(.5){n_{1k_1}}&\Vdots\\
  &&&&\hbox to 0 pt{~,\hss} }$$
And let $N_i=\prod_{j=1}^{k_i}n_{ij}$ for $i\in \{0,1\}$.

We start by proving the formula under the additional assumption that
there is no edge of weight $0$ adjacent to the nodes, except possibly
$r_0$ and $r_1$.

Let $H_i$ be a fiber at the $i$'th node. Let $L_i' \subset T^2$ be a simple
curve which generates $\ker(H_1(T^2,\Q)\hookrightarrow H_1(M_i,\Q))$ 
where $M_i$ is the piece
of $M$ gotten by cutting along the torus corresponding to the edge,
including the piece corresponding to the node $v_i$. 
Since $M$ is a rational homology sphere the Meyer Vietoris sequence
gives us that $H_1(T^2,\Q)\cong H_1(M_0,\Q)\bigoplus
H_1(M_1,\Q)$. $H_1(M_i,\Q)\cong H_1(T^2,\Q)/L_i$ by the long
exact sequence  
and $H_1(M_i/T^2)=H_1(M/M_{i+1})$ is a finite group. This implies
that $L_0$ and $L_1$ are linearly independent, so $L_0\cdot L_1\neq
0$, where $\cdot$ denotes the intersection product in $T^2$.  

We have the
following relation
\begin{align}
a_iH_i=b_{i0}L_0+b_{i1}L_1
\end{align} 
for some $a_i,b_{i0},b_{i1}\in \Z$, since $L_0,L_1$ are linearly
independent in $H^1(T^2,\Q)=\Q^2$ and hence a basis. We also note that
$L_i\cdot L_i=0$.  

We now want to compute the linking numbers $\lk(H_i,H_j)$ for $i,j\in
\{0,1\}$. Let
$C_i\subset M_i$ be such that $\partial C_i=
L_i$. This implies that $a_iH_i=b_{i0}\partial
C_0 + b_{i1}\partial C_1$. Then one can compute $\lk(H_i,H_j)$ as
 $\lk(H_i,\frac{1}{a_j}(b_{j0}\partial
C_0 + b_{j1}\partial C_1)$, but this is
the same as to compute $H_i\bullet(\frac{1}{a_j}(b_{j0} C_0 +
b_{j1}C_1)$, where $\bullet$ denotes the intersection number in
$M$. Now $C_0$ lives in the $M_0$ piece and $C_1$ in 
the $M_1$ piece, so when one computes
$H_0\bullet(\frac{1}{a_j}(b_{j0} C_0 + 
b_{j1}\frac{1}{c_1}C_1)$, it is only the
$C_0$ parts that matters, since $H_0$ is in the $M_0$ piece, and therefore
does not intersect things in the $M_1$ piece. This means we can
compute $\lk(H_0,H_j)$ as $H_0\bullet(\frac{1}{a_j}b_{j0} C_0)$.

$T^2$ has a collar neighborhood in $M_0$, so when we want to compute
$\lk(H_0,H_0)$ we can assume that the push-off of one of the copies of
$H_0$ in $T^2$ lives
in this collar neighborhood. I.e. if the collar neighborhood is
$(0,1]\times T^2$, then the push off is ${s}\times H_0$ for some $s\in
(0,1]$. Over the collar neighborhood $C_0$ is just $(0,1]\times
L_0$, so 
\begin{align*}
H_0\bullet(\frac{1}{a_0}b_{00}C_0)&=({s}\times H_0)\bullet
(\frac{1}{a_0}b_{00} ((0,1]\times  c_0L_0))\\
&= H_0\cdot (\frac{1}{a_0}b_{00}L_0)\\ 
&= \frac{1}{a_0}(b_{00}L_0+b_{01}L_1)\cdot (\frac{1}{a_0}b_{00}L_0)\\ 
&=\frac{1}{a_0^2}b_{01}b_{00}(L_1\cdot L_0).
\end{align*}
So we get that $\lk(H_0,H_0)=\frac{1}{a_0^2}b_{01}b_{00}(L_1\cdot
L_0)$. By a similar calculation one gets that
$\lk(H_0,H_1)=\frac{1}{a_0a_1}b_{01}b_{10}(L_1\cdot L_0)$ and
$\lk(H_1,H_1)=\frac{1}{a_1^2}b_{10}b_{00}(L_1\cdot L_0)$.

Another way to calculate the linking number of two fibers is that it's
given by the inverse intersection matrix. More precisely, the linking
number of a fiber at the $i$'th piece in a plumbing diagram
$\Delta(M)$ of $M$ with a fiber at the $j$'th piece is given by the
negative of the
$(i,j)$'th entry of $A(\Delta(M))\inv$, where $A(\Delta(M))$ is the
intersection matrix of the plumbing $\Delta(M)$, as we showed in the
proof of Lemma \ref{linkingsign}. By theorem 12.2 in
\cite{neumannandwahl2} it is equal to
$\frac{l_{ij}}{\det(\Delta(M))}$, where $l_{ij}$ is the product of
the weights adjacent to but not on the path from the $i$'th node to
the $j$'th node in the splice diagram and $n$ is the number of
vertices in $\Delta(M)$. In their theorem they are
calculating $l_{ij}$ in a maximal splice diagram, but
it is clear that if one has calculated the maximal splice diagram from
the plumbing diagram $\Delta(M)$, one gets our unnormalized
splice diagram from the maximal one, by removing any vertices with only 2
edges and not changing any weights. So if $i$ and $j$ represents
Seifert fibered pieces, then one gets the same number $l_{ij}$ by
calculating it in our unnormalized splice diagram, since no vertices
with only 2 edges can contribute to $l_{ij}$.

Returning to our situation, we then get the the following equations
for the linking numbers using the notation from
above. $\lk(H_0,H_0)=\frac{N_0r_0}{d}$,
$\lk(H_1,H_1)=\frac{N_1r_1}{d}$ and
$\lk(H_0,H_1)=\frac{N_0N_1}{d}$. Combining this with our other
equations for the linking numbers we get. 
\begin{align}
\frac{N_0r_0}{d}&=\frac{1}{a_0^2}b_{01}b_{00}(L_1\cdot
L_0)\label{h0h0}\\         
\frac{N_1r_1}{d}&=\frac{1}{a_1^2}b_{10}b_{11}(L_1\cdot
L_0)\label{h1h1}\\         
\frac{N_0N_1}{d}&=\frac{1}{a_0a_1}b_{01}b_{10}(L_1\cdot
L_0)\label{h0h1}        
\end{align}

It follows from \eqref{h0h1} that the $b_{ij}\neq 0$, since $N_i\neq 0$
by our assumptions. So we can divide the product of \eqref{h0h0} and
\eqref{h1h1} by \eqref{h0h1}, this gives us. 
\begin{align}
\frac{r_0r_1}{d}&=\frac{1}{a_0a_1}b_{00}b_{11}(L_1\cdot L_0)\label{r0r1}
\end{align}
Let us now compute $p$, which is equal to $H_0\cdot H_1$ by definition,
\begin{align*}
H_0\cdot H_1&=
\frac{1}{a_0}(b_{00}L_0+b_{01}L_1)\cdot\frac{1}{a_1}(b_{10}L_0+b_{11}L_1)\\  
&=\frac{1}{a_0a_1}(b_{01}b_{10}L_1L_0+b_{00}b_{11}L_0L_1)\\
&= \frac{r_0r_1-N_0  N_1}{d}\\
 &=\frac{\widetilde{D}}{d}   
\end{align*}
Here we use \eqref{h0h1}, \eqref{r0r1} and the definition of
$\widetilde{D}$.

We have now proved the the equality
\begin{align}
dp=r_0r_1-\prod_{i=1}^{k_0}n_{0i}\prod_{j=1}^{k_1}n_{1j}
\end{align}
whenever $n_{ij}\neq 0$. Now this is a equation concerning minors of
the negative intersection matrix $-A(\Delta(M))$ of $M$. We want to see
what happens if we vary the diagonal entries of $-A(\Delta(M))$. We are
especially interested in what happens when we change the entries of the
diagonal corresponding to changing one of the $n_{ij}$'s. Let $a$ be a
entry on the diagonal of $-A(\Delta(M))$ which lies in the minor
$n_{0l}$. Replacing $a$ with any integer $b$, we get a new
matrix $-A(\Delta(M_b))$, which is the intersection matrix of the graph
manifold $M_b$ one gets from a plumbing diagram corresponding to one for
$M$ with the weight corresponding to $a$ replaced with $b$. For all
values of of $b$, except maybe one, $M_b$ is a rational homology
sphere, since by computing $d=\det(\Delta(M_b))$ by expanding by the
row which include $b$, one gets $d=bA+B$ which has at most one
solution for $d=0$, because $aA+B\neq 0$. 

$M_b$ has splice diagram with the same form as the one for $M$,
in particular around the node we are working with it look like
$$\splicediag{8}{30}{
  &&&&\\
  \Vdots&\overtag\Circ {v_0} {8pt}\lineto[ul]_(.5){n_{01}^b}
  \lineto[dl]^(.5){n_{0k_0}^b}
  \lineto[rr]^(.25){r_0^b}^(.75){r_1^b}&& \overtag\Circ{v_1}{8pt}
  \lineto[ur]^(.5){n_{11}^b}
  \lineto[dr]_(.5){n_{1k_1}^b}&\Vdots\\
  &&&&\hbox to 0 pt{~,\hss} }$$

The only weights of the splice diagram of $M_b$ which are
different of the weights from the splice diagram of $M$, are $n_{0l}$
and $r_1$ since none of the others see the the entry of $-\Delta{M}_b$
which we have changed. Again $n_{0l}^b=bA_{01}+B_{01}$ and
$n_{01}=aA_{01}+B_{01}$, so $n_{0l}^b=0$ for at most one value of $b$. So
for all but maybe two values of $b$, we have the following equation
\begin{align}
d_bp=r_0r_1^b-n_{0l}^b\prod_{\substack{i=1 \\ i\neq
    l}}^{k_0}n_{0i}\prod_{j=1}^{k_1}n_{1j}. 
\end{align}
Let $\widetilde{N}_l=\prod_{\substack{i=1 \\ i\neq
    l}}^{k_0}n_{0i}\prod_{j=1}^{k_1}n_{1j}$. We get that $r_1^b=bA_1
+B_1$ and the above equation becomes
\begin{align}
(bA+B)p=r_0(bA_1+B_1)-(bA_{01}+B_{01})\widetilde{N}_l.
\end{align}
This is equivalent to 
\begin{align}\label{beq}
b(Ap-A_1r_0+A_{01}\widetilde{N}_j)=
Bp-B_1r_0+B_{01}\widetilde{N}_j. 
\end{align}
Since this is true for more than one value of $b$, it implies that
\begin{align}
Ap-A_1r_0+A_{01}\widetilde{N}_j=
Bp-B_1r_0+B_{01}\widetilde{N}_j=0. 
\end{align}
But this implies that equation \eqref{beq} holds for any value of
$b$. So the equation $dp=D$ holds even if we change the diagonal
entries of $-\Delta(M)$, so, in particular, it holds if some $n_{ij}=0$. Now
since we are only interested in rational homology spheres, and for
them $d\neq 0$, we get that the unnormalized edge determinant equation
always holds, by dividing by $d$.

\end{proof}

We get following corollary, just by using the relation between $D$ and
$\widetilde{D}$, that $\num{\det(\Delta(M))}=\num{H_1(M)}$ and taking
absolute value.
\begin{cor}[Edge determinant
  equation]\label{edgedeterminanteq}
For an edge between nodes in the splice diagram for the rational
homology sphere graph 
manifold $M$, we get the following equation
\begin{align}
p=\frac{\num{D}}{\num{H_1(M)}}
\end{align}
where $p$ is the intersection number in the torus corresponding to
the edge of a fiber from each of the sides of the torus, and $D$ is
the edge determinant associated to that edge.
\end{cor}

A consequence of the edge determinant equation is that no node in the
splice diagram can have more than one adjacent edge weight of value
$0$. This is because we know that no leaf has edge weight $0$, so if
we have a node with at least two adjacent edge weights of value $0$,
the edge determinant of an edge with $0$ on would be
$0r_1-\epsilon_0\epsilon_10N_1=0$, and then the edge determinant
equation implies that $p=0$. But $p=0$ means that the fibers from each
side of the torus corresponding to the edge have intersection number
$0$, but then we could extend the fibration over $T^2$. So the nodes
$v_0$ and $v_1$ 
correspond to one node $v$ corresponding to a Seifert fibered piece,
which would not be cut in the $JSJ$ decomposition.

Next we need a formula for computing the rational euler class
of the Seifert fibered pieces of our graph manifold, using only
information from the splice diagram and the order of the first
homology group. If we have a node in our splice diagram, as in Fig.\ 1
below, where everything to the left is leaves

$$\splicediag{8}{30}{
  &&&&\\
 &&& \overtag\Circ{v_1}{8pt}
      \lineto[ur]^(.5){m_{11}}
  \lineto[dr]_(.5){m_{1l_l}}&\Vdots\\
  \Circ&&&&\\ 
  \Vdots&\overtag\Circ {v} {8pt}\lineto[ul]_(.5){n_1}
  \lineto[dl]^(.5){n_k}
  \lineto[uurr]^(.25){r_1}^(.75){s_1}
  \lineto[ddrr]^(.25){r_k}^(.75){s_k}
  &\Vdots&&  \\
  \Circ&&&&\\
  &&& \overtag\Circ{v_k}{8pt}
      \lineto[ur]^(.5){m_{k1}}
  \lineto[dr]_(.5){m_{kl_k}}&\Vdots\\
  &&&&\\
  &&\text{Figure 1}&&\hbox to 0 pt{~,\hss} }$$
we let $N=\prod_{j=1}^kn_j$ and let $M_i=\prod_{j=1}^{l_i}m_{ij}$. Then
we have the following proposition.

\begin{prop}\label{eulernumber}
Let $v$ be a node in a splice diagram decorated as in Fig.\ 1 above with $r_i\neq
0$ for $i\neq 1$, let $e_v$ be the rational euler number of the 
Seifert fibered piece corresponding to $v$, then
\begin{align}  
e_v=-d\big(\frac{\epsilon s_1}{ND_1\prod_{j=2}^kr_k}+
\sum_{i=2}^k\frac{\epsilon_iM_i}{r_iD_i}\big) 
\end{align}
where $d=\num{H_1(M)}$ and $D_i$ is the edge determinant associated to
the edge between $v$ and $v_i$.
\end{prop}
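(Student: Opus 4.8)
The plan is to express $e_v$ as a self-linking number of the fibre in the closed Seifert fibered rational homology sphere $\bar M_v$ obtained by filling the boundary tori of $M_v$ as in the definition of the rational Euler number, and then to evaluate that linking number in terms of $\Gamma(M)$ by the Mayer--Vietoris bookkeeping used in the proof of Lemma~\ref{nedgedeterminanteq}.

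First I would use the standard fact that for a closed Seifert fibered rational homology sphere over $S^2$ with regular fibre $H$ the rational Euler number satisfies $e\cdot\lk(H,H')=-1$, where $H'$ is a parallel fibre; this follows from the Seifert invariants together with $\num{H_1}=\num{\alpha_1\cdots\alpha_m\,e}$, and is implicit in the linking-number computations of \cite{neumannandwahl2}. Applied to $\bar M_v$ this gives $e_v=-1/\lk_{\bar M_v}(H_v,H_v)$, and in particular $\sign(e_v)=-\epsilon_v$, consistent with the shape of the asserted formula; so everything is reduced to computing $\lk_{\bar M_v}(H_v,H_v)$.

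To do that I would cut $M$ along the tori $T_1,\dots,T_k$ corresponding to the edges at $v$, write $M=M_v\cup\bigcup_iM_i^{+}$ with $M_i^{+}$ the piece on the $v_i$ side of $T_i$, and $\bar M_v=M_v\cup\bigcup_iV_i$ with $V_i$ a solid torus glued to $T_i$ so that its meridian is $H_{v_i}$. Choose a rational $2$-chain $C$ in $\bar M_v$ with $\partial C=mH_v$, split it as $C=C_v+\sum_iC_i$ with $C_v\subset M_v$ and $C_i\subset V_i$ a multiple of a meridian disk, so that $\partial C_v=mH_v+\sum_ic_iH_{v_i}$ in $M_v$; since a second fibre $H_v'$ may be chosen inside $M_v$ and disjoint from the tori, $\lk_{\bar M_v}(H_v,H_v)=\tfrac1m H_v'\bullet C=\tfrac1m H_v'\bullet C_v$. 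As in Lemma~\ref{nedgedeterminanteq} I would pin down the rationals $(m,c_1,\dots,c_k)$, determined up to a common scalar by the single relation $m[H_v]+\sum_ic_i[H_{v_i}]=0$ in $H_1(M_v;\Q)$, by passing to the kernel curves $L_v^{(i)},L_{v_i}\subset T_i$ of the two sides and the relations $a_iH_v=b_{i0}L_v^{(i)}+b_{i1}L_{v_i}$ and $a_i'H_{v_i}=b_{i0}'L_v^{(i)}+b_{i1}'L_{v_i}$. Feeding in the data already available,
\[
\lk_M(H_v,H_v)=\frac{N\prod_jr_j}{\det(\Delta(M))},\qquad
\lk_M(H_v,H_{v_i})=\frac{NM_i\prod_{j\ne i}r_j}{\det(\Delta(M))},\qquad
H_v\cdot H_{v_i}=\frac{\widetilde D_i}{\det(\Delta(M))}
\]
(the first two as in the proof of Lemma~\ref{nedgedeterminanteq}, the last being Lemma~\ref{nedgedeterminanteq} itself, with $\num{\det(\Delta(M))}=d$), together with the identity $D_i=r_is_i-\epsilon_v\epsilon_{v_i}M_iN\prod_{j\ne i}r_j$ defining the edge determinant, the contribution of $T_i$ to $H_v'\bullet C_v$ becomes a term proportional to $r_is_i/D_i$; summing over $i$ and taking the reciprocal produces
\[
e_v=-\frac{\epsilon_v\,d}{N\prod_jr_j}\Bigl(\sum_{i=1}^k\frac{r_is_i}{D_i}-(k-1)\Bigr),
\]
which is the stated identity once the $i=1$ summand is rewritten so that $r_1$ does not appear in a denominator.

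The hard part will be the sign bookkeeping, intertwined with showing that the relevant slopes are governed by $r_i$ near $v$ and $s_i$ near $v_i$: one must track the orientations of fibres and sections fixed before Section~\ref{det}, and identify $L_v^{(i)}$, $L_{v_i}$ and the slope of $H_{v_i}$ in $T_i$ through the linear sub-plumbings whose determinants are $r_i$ and $s_i$, invoking $D_i=\sign(\tilde r_i)\sign(\tilde s_i)\widetilde D_i$ and $\epsilon_v=\sign(\det(\Delta(M)))\prod_e\sign(\tilde d_{ve})$. One must also allow exactly one of the $r_i$, say $r_1$, to vanish --- no node carries two zero weights by the remark following Corollary~\ref{edgedeterminanteq}, and then $D_1\ne0$ since otherwise $p_1=\widetilde D_1/\det(\Delta(M))$ would be $0$ --- so, as in the $b$-variation argument ending the proof of Lemma~\ref{nedgedeterminanteq}, I would establish the formula first when all $r_i\ne0$ and then clear the denominator to reach the displayed form. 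An equivalent and perhaps cleaner route: observe that $\Delta(\bar M_v)$ is obtained from $\Delta(M)$ by replacing, for each $i$, the subtree beyond $T_i$ with a linear arm encoding the filling, and compute $\bigl(A(\Delta(\bar M_v))^{-1}\bigr)_{vv}$ from $\bigl(A(\Delta(M))^{-1}\bigr)_{vv}=-\lk_M(H_v,H_v)$ by a Schur-complement identity for the intersection matrix, the continued-fraction data of the new arms again being the crux.
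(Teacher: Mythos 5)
Your route is genuinely different from the paper's. The paper works entirely inside the plumbing matrix: it writes $-A(\Delta)$ in block form around the node $v$, diagonalises the arms by continued fractions, invokes the identity $-e_v=b-\sum[\,b_{i1},b_{i2},\dots]-\sum[\,c_{i1},c_{i2},\dots]$ from \cite{commensurability} to recognise $e_v$ in the top-left entry, and then identifies the correction terms $\xi_j$ by comparing $\det(\Delta(M))$ with $\det(\Delta')=s_j$ for the one-torus fillings, using Lemma~\ref{nedgedeterminanteq} to convert $p_j$ into $\widetilde D_j/d$. You instead reduce to the self-linking $e_v=-1/\lk_{\bar M_v}(H_v,H_v)$ in the closed filled-in piece and propose to evaluate that by splitting a rational Seifert surface across the cutting tori. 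The reduction itself is sound (for a star-shaped plumbing $\det(-A)=-e\,\alpha_1\cdots\alpha_m$, so $\lk(H,H)=\ell_{vv}/\det=-1/e$), your three input formulas are quoted correctly, and your intermediate identity $e_v=-\frac{\epsilon_v d}{N\prod_jr_j}\bigl(\sum_i r_is_i/D_i-(k-1)\bigr)$ is algebraically equivalent to the paper's \eqref{neuler}, so the target is right. Your closing ``alternative route'' via the truncated plumbing $\Delta(\bar M_v)$ and a Schur complement is in fact essentially the paper's proof.

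That said, as it stands the argument has real gaps. First, the entire computational content is in the one asserted sentence that ``the contribution of $T_i$ to $H_v'\bullet C_v$ becomes a term proportional to $r_is_i/D_i$'': you must actually solve $m[H_v]+\sum_ic_i[H_{v_i}]=0$ in $H_1(M_v;\Q)$ (note the $[H_{v_i}]$ are not the kernel curves $L_{v_i}$ of the $M_i^+$ sides, so the coefficients differ from those of a Seifert surface in $M$), justify that $H_v'\bullet C_v$ depends only on $\partial C_v$ (it does, since $H_2(M_v;\Q)$ is generated by vertical tori, which miss a fibre), and carry out the change-of-basis between $H_{v_i}$ and $L_{v_i}$ using $a_i'H_{v_i}=b_{i0}'L_v^{(i)}+b_{i1}'L_{v_i}$; this is comparable in length to the paper's proof and is not done. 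Second, the proposition does not assume $e_v\neq0$, and $e_v=0$ can occur for an interior piece of a rational homology sphere graph manifold; then $\bar M_v$ is not a rational homology sphere, $\lk_{\bar M_v}(H_v,H_v)$ is undefined, and the reciprocal relation you start from degenerates. You handle $r_1=0$ by the deformation trick but not this case; it needs the same polynomial-in-$b$ continuation or a separate argument. Third, your side remark that $\sign(e_v)=-\epsilon_v$ is false in general: already for one adjacent node, $e_v=-d\,\epsilon s_1/(ND_1)$ has sign $+\epsilon_v$ when $D_1<0$. The sign $\epsilon_v$ is the sign of the fibre self-linking in $M$, whereas $-\sign(e_v)$ is the sign of the fibre self-linking in $\bar M_v$, and these need not agree; the remark is not load-bearing for your argument, but it should be deleted rather than relied on in the sign bookkeeping you defer.
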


\begin{proof}
We start by proving a formula for $e(v)$ using an unnormalized splice
diagram, and then show that the relation between unnormalized and
normalized splice diagram will give us the formula. 

We first assume that $r_1\neq 0$ and prove the formula under that
hypothesis.  

Let $\Gamma(M)$ be a non normalized splice diagram, looking like the
above one. It is constructed from the plumbing diagram $\Delta$, which
look like Fig.\ 2 below around the node $v$.

$$\splicediag{8}{30}{
  &&&&&&&\\
 \overtag\Circ{-b_{1a_1}}{8pt}\lineto[dr]
&&&&&& \overtag\Circ{c_1}{8pt}
      \lineto[ur]
  \lineto[dr] \lineto[dl]
  &\Vdots\\
  &\overtag\Circ{-b_{1(a_1-1)}}{8pt}\dashto[ddr]
  &&&&\overtag\Circ{-c_{1m_1}}{8pt}\dashto[ddl]&&\\    
  &&&&&&&\\
  &&\overtag\Circ{-b_{11}}{8pt}\lineto[dr]
  &&\overtag\Circ{-c_{11}}{8pt}\lineto[dl]
  &&&\\
  &&\Vdots&\overtag\Circ{b}{8pt}\lineto[dr]
  \lineto[dl]
  &\Vdots&&&  \\
  &&\overtag\Circ{-b_{l1}}{8pt}\dashto[ddl]
  &&\overtag\Circ{-c_{k1}}{8pt}\dashto[ddr]
  &&&\\
  &&&&&&&\\
  &\overtag\Circ{-b_{l(a_l-1)}}{8pt}\lineto[dl]
  &&&&\overtag\Circ{-c_{km_k}}{8pt}\lineto[dr]&&\\
  \overtag\Circ{-b_{la_l}}{8pt}
  &&&&&&\overtag\Circ{c_k}{8pt}
      \lineto[ur]
   \lineto[dr]
  &\Vdots\\
  &&&&&&&\\
  &&&\text{Figure 2}&&&&\hbox to 0 pt{~,\hss} }$$
where $b_{ij},c_{ij}\geq 2$.

We want to compute $\det(-\Delta)$, so we look at the negative intersection
matrix $-A(\Delta)$ of $\Delta$, which we can write like  

\begin{align*}
-A(\Delta)=
\begin{pmatrix}
b&-1&0&\dots&-1&\dots&-1&\dots&-1&\dots\\
-1&b_{11}&-1&\dots&0&\dots&0&\dots&0&\dots\\
0&-1&b_{12}&\dots&0&\dots&0&\dots&0&\dots\\
\vdots&\vdots&\vdots&\ddots&\vdots&&\vdots&&\vdots&\\
-1&0&0&\dots&b_{21}&\dots&0&\dots&0&\dots\\
\vdots&\vdots&\vdots&&\vdots&\ddots&\vdots&&\vdots&\\
-1&0&0&\dots&0&\dots&c_{11}&\dots&0&\dots\\
\vdots&\vdots&\vdots&&\vdots&&\vdots&\ddots&\vdots&\\
-1&0&0&\dots&0&\dots&0&\dots&c_{21}&\dots\\
\vdots&\vdots&\vdots&&\vdots&&\vdots&&\vdots&\ddots
\end{pmatrix}
\end{align*}

We get it the following way. If we delete the $b$-weighted vertex
$v\in\Delta$ we 
get $l$ components on the left and $k$ components on the right.

If we let $B_i$ be the negative intersection matrix of the $i$'th component to
the left. It is of the form  
\begin{align*}
B_i=
\begin{pmatrix}
b_{i1}&-1&\dots&0&0\\
-1&b_{i2}&\dots&0&0\\
\vdots&\vdots&\ddots&\vdots&\vdots\\
0&0&\dots&b_{ia_{i-1}}&-1\\
0&0&\dots&-1&b_{ia_i}
\end{pmatrix}.
\end{align*} 
Likewise we let $C_i$ be the negative intersection matrix of the
$i$'th component to the right.
\begin{align*}
C_i=
\begin{pmatrix}
c_{i1}&-1&\dots&0&0&\dots\\
-1&c_{i2}&\dots&0&0&\dots\\
\vdots&\vdots&\ddots&\vdots&\vdots&\vdots\\
0&0&\dots&c_{im_i}&-1&0\\
0&0&\dots&-1&c_i&-1\\
\vdots&\vdots&\dots&0&-1&\ddots
\end{pmatrix}
\end{align*} 
We get that $-A(\Delta)$ has $b$ at the $A_{11}$ entry, then the
$B_i$'s and the $C_i$'s are following along the diagonal. The first
row and column has a $-1$ in the column/row there corresponds to the
upper left corner of a $B_i$ or $C_i$, and all other entries $0$. 

\begin{align*}
-A(\Delta)=\left(
\begin{array}{c c c c c c c}
b &\begin{matrix}
 -1& 
\end{matrix}
&&\begin{matrix}
 -1& 
\end{matrix}&\begin{matrix}
 -1& 
\end{matrix}&&\begin{matrix}
 -1& 
\end{matrix}\\
\begin{matrix}
 -1\\ \quad 
\end{matrix}&
\text{\Huge $B_1$}
&&&&&\\
&&\ddots&&&&\\
\begin{matrix}
 -1\\ \quad 
\end{matrix}&&&
\text{\Huge $B_l$}
&&&
\\
\begin{matrix}
 -1\\ \quad 
\end{matrix}&&&&
\text{\Huge $C_1$}
&&\\
&&&&&\ddots&\\
\begin{matrix}
 -1\\ \quad 
\end{matrix}&&&&&&
\text{\Huge $C_k$}
\end{array}
\right)
\end{align*}

We will diagonalize the matrix to compute
$\det(-\Delta)=\det(-A(\Delta))$. This can be done by first
diagonalising the matrix, except for the first row and column. To see
how this is done we look at one of the $B_i$. We clear the off-diagonal
term in the last column by adding $\tfrac{1}{b_{ia_i}}$
times the last row from the second to last row, then we can clear the
$-1$ at the left of the $b_{ia_i}$ by a symmetric argument. We have
now cleared the off diagonal terms of the last row and column, and at
the second to last diagonal entry we have
$b_{ia_{i-1}}-\tfrac{1}{b_{ia_1}}$. Since all the $b_{ij}\geq 2$ we
can continue doing this using the last row and columns with
off-diagonal entries to clear the one before it. By diagonalising $B_i$ this
way we only add rows and columns, and we never use the first row and
column of $B_i$, this assures us that it does not change the matrix
outside the $B_i$ block, since the rows and columns we use have zeros
outside $B_i$. The first entry of the block  after diagonalising it
will then be
\begin{align*}
\beta_i=b_{i1}-\cfrac{1}{b_{i2}-\cfrac{1}{b_{i3}-\dots}}
\end{align*}  
We can also diagonalise the $C_i$'s in the same way, by starting at
the bottom right corner and working up, only adding rows
and columns that are not the first row and column. We will denote the
first entry of the diagonalization of $C_i$ by $\gamma_i$ 

To get the matrix completely diagonal we have to remove the
$-1$ in the first row and the first column. If $-1$ is in the first
row corresponds to the entry $\beta_i$ of a diagonalized $B_i$
 then we remove it by adding $\frac{1}{\beta_i}$
times the $i$'th 
row to the first. This changes the the first entry by subtracting
$\frac{1}{\beta_i}$. Similarly if the $-1$ corresponds to the entry
$\gamma_i$ of the diagonalized $C_i$ we let
\begin{align*}
\lambda_i=\gamma_i-c_{i1}-\cfrac{1}{c_{i2}-\cfrac{1}{c_{i3}-\dots}}
\end{align*}
Our assumptions on the splice diagram assures the $A_{ii}\neq 0$ since
$\det(C_i)=r_i$ and $A_{ii}\mid \det(C_i)$. We let
\begin{align*}
\xi_i=\cfrac{1}{c_{i1}-\cfrac{1}{c_{i2}-\cfrac{1}{c_{i3}-\dots}}}
-\cfrac{1}{c_{i1}-\cfrac{1}{c_{i2}-\cfrac{1}{c_{i3}-\dots}}+
  \lambda_i}
\end{align*}
So the change to the first entry of the matrix will be adding
\begin{align*}
\xi_i-\cfrac{1}{c_{i1}-\cfrac{1}{c_{i2}-\cfrac{1}{c_{i3}-\dots}}} 
\end{align*}

 We then get that first entry of the diagonalised matrix is
\begin{align*}
b-\sum_{i=1}^l\cfrac{1}{b_{i1}-\cfrac{1}{b_{i2}-\cfrac{1}{b_{i3}-\dots}}}
-\sum_{i=1}^k\cfrac{1}{c_{i1}-\cfrac{1}{c_{i2}-\cfrac{1}{c_{i3}-\dots}}} 
+\sum_{i=1}^k\xi_i
\end{align*}
Now we know that 
\begin{align*}
-e_v= b-\sum_{i=1}^l\cfrac{1}{b_{i1}-\cfrac{1}{b_{i2}-\cfrac{1}{b_{i3}-\dots}}}
-\sum_{i=1}^k\cfrac{1}{c_{i1}-\cfrac{1}{c_{i2}-\cfrac{1}{c_{i3}-\dots}}}
\end{align*}
by arguments of Walter Neumann in the proof of theorem 3.1 in
\cite{commensurability}. So if $d=\det(A(\Delta))$ we get that
\begin{align}
d=(-e_v+\sum_{i=1}^k\xi_i)
\prod_{i=1}^l\det(B_1)\prod_{i=1}^k\det(C_i)
\end{align}

But we also know that $n_i=\det(B_i)$ and $r_i=\det(C_i)$, so we get
the following formula. 
\begin{align}
d=(-e_v+\sum_{i=1}^k\xi_i)
\prod_{i=1}^ln_i\prod_{i=1}^kr_i=(-e(v)+\sum_{i=1}^k\xi_i)
N\prod_{i=1}^kr_i\label{d}
\end{align}

If we cut $M$ along the torus just before the Seifert fibered piece
corresponding to the node $c_j$, when coming from $v$, and glue in a
solid tori, we get a graph manifold with a non normalized splice
diagram $\Gamma'$ corresponding to $\Gamma$, where we remove
everything after $r_j$, so that $r_j$ becomes the weight corresponding to
a leaf. And the plumbing diagram $\Delta'$ of this manifold corresponds  
to $\Delta$ with everything after $c_{jm_j}$ removed. We can now make
the same calculation $\det(\Delta')$ as above and get that 
\begin{align}
\det(\Delta')=p_j(-e_v+\sum_{\substack{i=1 \\ i\neq j}}^k\xi_i)
N\prod_{\substack{i=1 \\ i\neq j}}^kr_i \label{sj}
\end{align}
where 
\begin{align*}
p_j=\det
\begin{pmatrix}
c_{i1}&-1&\dots&0\\
-1&c_{i2}&\dots&0\\
\vdots&\vdots&\ddots&\vdots\\
0&0&\dots&c_{im_i}\\
\end{pmatrix}
\end{align*}
But it follows from the proof of theorem 3.1 in
\cite{commensurability} that $p_j$ is the fiber intersection number for the
edge. By definition $\det(\Delta')=s_j$. So by combining \eqref{d} and
\eqref{sj} we get that  
\begin{align*}
-\xi_j=\frac{s_j}{p_jN\prod_{\substack{i=1 \\ i\neq
      j}}^kr_i}
-\frac{d}{N\prod_{i=1}^kr_i}=\frac{s_jr_j-dp_j}{p_jN\prod_{i=1}^kr_i} 
\end{align*}
by using that $p_j=\frac{\widetilde{D}}{d}$ by
\ref{nedgedeterminanteq} we get
\begin{align*}
-\xi_j=d\frac{s_jr_j-\widetilde{D}_j}{\widetilde{D_j}N\prod_{i=1}^kr_i} 
=d\frac{NM_j\prod_{\substack{i=1 \\ i\neq
      j}}^kr_i}{\widetilde{D_j}N\prod_{i=1}^kr_i}=\frac{dM_j}{r_j\widetilde{D}_j} 
\end{align*}
So using this in \eqref{d} we get
\begin{align}  
e(v)&=-d\big(\frac{1}{N\prod_{j=1}^kr_k}+
\sum_{i=1}^k\frac{M_i}{r_i\widetilde{D}_i}\big)\\
&=-d\big(\frac{\widetilde{D}_1}{N\widetilde{D}_1\prod_{j=1}^kr_k}+
\sum_{i=1}^k\frac{M_i}{r_i\widetilde{D}_i}\big)\\
&=-d\big(\frac{r_1s_1-N\prod_{j=1}^kr_kM_1}{N\widetilde{D}_1\prod_{j=1}^kr_k}+
\sum_{i=1}^k\frac{M_i}{r_i\widetilde{D}_i}\big)\\
&=-d\big(\frac{s_1}{N\widetilde{D}_1\prod_{j=2}^kr_k}+
\sum_{i=2}^k\frac{M_i}{r_i\widetilde{D}_i}\big)\label{neuler}
\end{align}

This proves the formula if $r_1\neq 0$. We get a new equation by
multiplying both sides of \eqref{neuler} by
$\prod_{i=1}^k\widetilde{D}_i\prod_{i=2}^kr_k$. This equation is as in the
proof of unnormalized edge determinant equation, an  equation in the
minors of $-\Delta(M)$. By changing a diagonal entry $b$ of $-\Delta(M)$,
lying in $C_1 $, we get that the equation becomes a polynomial
equation in $b$,
which is an equality for infinity many values of $b$, hence it is an
equality, so the equation holds for all value of $b$. We get our formula  by
dividing this equation by
$\prod_{i=1}^k\widetilde{D}_i\prod_{i=2}^kr_i$, which is not $0$  by
our assumption on the $r_i$'s.   

We saw earlier that $\epsilon=\sign(d)\prod_{i=1}^l\sign(n_i)
\prod_{i=1}^k\sign(r_i)$, we get that $\frac{d}{N\prod_{i=1}^kr_i}=
\frac{\epsilon\num{d}}{\num{N}\prod_{i=1}^k\num{r_i}}$. 

Using that
$D_i=\sign(r_i)\sign(s_i)\widetilde{D}$ and
$\epsilon_i=\sign(M_i)\sign(s_i)\sign(d)$, we also get that
$d\frac{M_i}{r_i\widetilde{D}_i}=
\num{d}\frac{\epsilon_j\num{M_i}}{\num{r_i}D_i}$, which proves the
proposition.

\end{proof}

From Corollary \ref{edgedeterminanteq} and Propositions
\ref{orbifoldeulerform} and
\ref{eulernumber} we get the information needed to make the
decomposition graph.

\section{Proof of the First Main Theorem}\label{prof}

In last section we saw that knowing the splice diagram and the order
of the first homology group lets us construct the decomposition graph
of $M$. It therefore also lets us construct the decomposition matrix,
also called the reduced
plumbing matrix as defined in \cite{commensurability}. By
theorem 3.1 in \cite{commensurability} we just need to show that the
decomposition matrix is negative definite if all edge determinants
are positive and the splice diagram has no negative decorations. 

\begin{thm}
Let $M$ be a rational homology sphere graph manifold, with splice
diagram $\Gamma$, such that all edge determinants are positive and
$\Gamma$ has no negative decorations at nodes. Then $M$ is a
singularity link.  
\end{thm}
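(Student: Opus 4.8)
The plan is to reduce the statement, via Theorem 3.1 of \cite{commensurability}, to showing that the decomposition matrix (reduced plumbing matrix) of $M$ is negative definite, and to extract negative definiteness directly from the hypotheses on the splice diagram. As observed at the start of the section, the splice diagram $\Gamma$ together with $\num{H_1(M)}$ determines the decomposition graph --- hence the decomposition matrix --- by Corollary \ref{edgedeterminanteq} and Propositions \ref{orbifoldeulerform} and \ref{eulernumber}. So everything is phrased in terms of the numerical data $e_v$, $\chi^{orb}_v$, and the edge weights $p_e$, which we can read off from $\Gamma$ (once $d=\num{H_1(M)}$ is fixed). The decomposition matrix $P$ is the symmetric matrix with off-diagonal entry $p_e$ on an edge $e$ and diagonal entry at a node $v$ given (up to the usual normalization) by $e_v$ together with a correction term involving the adjacent $p_e$'s; concretely it is the matrix of the intersection form on the reduced plumbing, and negative definiteness of $P$ is exactly the condition that $M$ is a singularity link.

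The main step is then a positivity/sign computation: I would show that, under the assumption $\epsilon_v>0$ for all nodes (no negative decorations) and $D_e>0$ for all edges, every relevant quantity has the sign forcing negative definiteness. First, by Corollary \ref{edgedeterminanteq}, $p_e=\num{D_e}/\num{H_1(M)}>0$, so all off-diagonal entries of $-P$ are negative, i.e. $-P$ is (after a sign normalization of the generators, which is legitimate on a tree) a matrix with nonpositive off-diagonal entries. Second, I would use Proposition \ref{eulernumber}: when all $\epsilon_v=+1$ and all $D_i>0$, the formula
\begin{align*}
e_v=-d\Big(\frac{s_1}{ND_1\prod_{j=2}^k r_j}+\sum_{i=2}^k\frac{M_i}{r_i D_i}\Big)
\end{align*}
shows $e_v<0$ (all factors appearing are positive: the $N$, $M_i$, $r_i$, $s_1$ are products of nonnegative edge weights, none of which can vanish in the relevant positions by the remark that at most one adjacent zero weight is possible and the edge-determinant positivity rules that out here). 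Negativity of the rational Euler numbers together with positivity of the edge weights is precisely what one needs to run the standard argument --- a convexity/diagonal-dominance argument on the tree, exactly as in the plumbing-calculus proof that a negative-definite plumbing gives a singularity link --- to conclude that $-P$ is positive definite, equivalently $P$ is negative definite. Essentially this amounts to writing $-P = L^{t}DL$ or to a direct inductive diagonalization along the tree, using $e_v<0$ to guarantee each pivot stays negative, in the same spirit as the diagonalization of $-A(\Delta(M))$ carried out in the proof of Proposition \ref{eulernumber}.

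I would organize the write-up as: (1) recall that by \cite{commensurability}, Theorem 3.1, $M$ is a singularity link iff its decomposition matrix is negative definite; (2) note the decomposition matrix is computable from $\Gamma$ and $\num{H_1(M)}$ by the results of Section \ref{det}; (3) show $p_e>0$ for every edge and $e_v<0$ for every node, using Corollary \ref{edgedeterminanteq} and Proposition \ref{eulernumber} respectively; (4) perform the inductive diagonalization along the tree to conclude negative definiteness. The main obstacle I anticipate is step (4): verifying that the pivots in the tree-diagonalization remain strictly negative requires controlling the cumulative correction terms picked up from the branches --- one needs an inequality saying that the negative contribution of $e_v$ dominates the positive contributions $\sum_e p_e^2/(\text{pivot of neighbor})$ coming in from adjacent nodes. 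This is where the positivity of \emph{all} edge determinants (not just those to leaves) is genuinely used, and it should follow from the edge-determinant equation relating $p_e$ to $D_e$ and $\num{H_1}$ combined with the Euler-number formula, but assembling the estimate cleanly is the part that needs care. A possible shortcut for step (4) is to invoke the plumbing calculus directly: exhibit a plumbing graph in normal form whose associated reduced data matches $P$ and whose intersection form is manifestly negative definite because all framings are $\le -2$ and the node framings, built from the $e_v<0$, are sufficiently negative --- then Grauert's theorem gives the singularity link immediately.
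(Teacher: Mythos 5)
Your steps (1)--(3) coincide with the paper's setup: reduce via Theorem 3.1 of \cite{commensurability} to negative definiteness of the reduced plumbing matrix, note that the hypotheses force all edge weights to be nonzero, and use Corollary \ref{edgedeterminanteq} and Proposition \ref{eulernumber} to get $p_e>0$ and $e_v<0$. But your step (4) is where the actual content of the theorem lives, and you have not supplied it. Negativity of every diagonal entry $e_v$ together with positivity of the off-diagonal entries is \emph{not} sufficient for negative definiteness of a tree-shaped symmetric matrix: when you eliminate a node, the neighbouring pivot is shifted by a positive correction term, and you need a quantitative inequality guaranteeing it stays negative. You correctly identify this as the obstacle ("assembling the estimate cleanly is the part that needs care") but then leave it unassembled, which is a genuine gap rather than a routine verification.

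The way the paper closes this gap is not by a diagonal-dominance estimate but by an exact identity. Choose an \emph{end node} $v$ (all neighbours but one, say $v'$, are leaves), for which Proposition \ref{eulernumber} gives $e(v)=-\epsilon s d/(DN)$. Eliminating $v$ shifts the $v'$ entry to $e(v')+\epsilon Nd/(Ds)$ (using $1/p^2=d^2/D^2$), and the formula of Proposition \ref{eulernumber} applied at $v'$ shows this correction exactly cancels the term that the edge to $v$ contributes to $e(v')$; the result is precisely $\tilde e(v')$, the rational Euler number of the corresponding piece in the manifold $M'$ obtained by splicing off $v$ and gluing in a solid torus. Hence the Schur complement is the reduced plumbing matrix of $M'$, whose splice diagram has $n-1$ nodes and still satisfies the hypotheses, and induction on the number of nodes finishes the proof. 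Without recognizing the corrected pivot as the Euler number of a smaller graph manifold in the same class, your "inductive diagonalization along the tree" has no mechanism to control the pivots. Your proposed shortcut via a normal-form plumbing with all framings $\le -2$ does not work either: node framings in negative definite plumbings need not be $\le -2$, and producing such a plumbing from $\Gamma$ is essentially equivalent to the statement being proved.
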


\begin{proof}
The assumption that all edge determinants are $>0$ and we do not have
any negative decorations at edges assures that no edge weight is
$0$. Because if we had an edge weight of $0$, then it had to be on a
edge between nodes, but the edge determinant of this edge would be
$0r_1-\epsilon_0\epsilon_1N_0N_1=-N_0N_1<0$.

Let $d=\num{H_1(M)}$. 
We proceed by induction in the number of nodes of $\Gamma$. If
$\Gamma$ only has one node, then $M$ is Seifert fibered and the
reduced plumbing matrix is a $1\times 1$ matrix, with the rational
euler number $e$ of $M$ as it's entry. By Proposition \ref{eulernumber}, 
$e=-d\frac{\epsilon}{N\prod_{j=0}^kr_k}$. But $N,r_k,d$ are all
greater than $0$ by definition, and $\epsilon=1$ by assumption, so $e$
is negative. Hence the reduced plumbing matrix is negative definite.

Assume that there are $n$ nodes in $\Gamma$. Let $v$ be a end node of
$\Gamma$, meaning a node of the form

$$\splicediag{8}{30}{
  \Circ&&&&\\
  \Vdots&\overtag\Circ {v} {8pt}\lineto[ul]_(.5){n_1}
  \lineto[dl]^(.5){n_l}
  \lineto[rr]^(.25){r}^(.75){s}&& \overtag\Circ{v'}{8pt}
  \lineto[ur]^(.5){m_1}
  \lineto[dr]_(.5){m_k}&\Vdots\\
  \Circ&&&&\hbox to 0 pt{~,\hss} }$$
such nodes always exist since $\Gamma$ is a tree. Then the reduced
plumbing matrix is of the form 

\begin{align*}
\begin{pmatrix}
e(v)&\frac{1}{p}&0&\dots\\
\frac{1}{p}&e(v')&&\\
0&&\ddots&\\
\vdots&&&\\
\end{pmatrix}
\end{align*}

If we set $N=\prod_{i=0}^ln_i$ and $M=\prod_{i=0}^km_i$ we get
by Proposition \ref{eulernumber} that
\begin{align*}
e(v)=-d\frac{\epsilon s}{DN} 
\end{align*}
where $D$ is the edge determinant of the edge between $v$ and
$v'$. This means that the matrix look like this 
\begin{align*}
\begin{pmatrix}
\frac{-\epsilon sd}{DN}&\frac{1}{p}&0&\dots\\
\frac{1}{p}&e(v')&&\\
0&&\ddots&\\
\vdots&&&\\
\end{pmatrix}
\end{align*}
By a row and column operation we get the matrix to the form 
\begin{align*}
\begin{pmatrix}
\frac{-\epsilon sd}{DN}&0&0&\dots\\
0&e(v')+\frac{\epsilon DN}{p^2sd}&&\\
0&&\ddots&\\
\vdots&&&\\
\end{pmatrix}
=
\begin{pmatrix}
\frac{-\epsilon sd}{DN}
\end{pmatrix}
\oplus
\begin{pmatrix}
e(v')+\frac{\epsilon Nd}{Ds}&&\\
&\ddots&\\
&&&\\
\end{pmatrix},
\end{align*}
where the equality follows from using that $\frac{1}{p^2}=\frac{d^2}{D^2}$.
Since $s,d,N$ are positive by definition and $D,\epsilon$ are positive
by assumption, the reduced plumbing matrix is negative definite if the
matrix 
\begin{align*}
\begin{pmatrix}
e(v')+\frac{\epsilon Nd}{Ds}&&\\
&\ddots&\\
&&&\\
\end{pmatrix}
\end{align*}

is negative definite. Now
\begin{align*}
e(v')+\frac{\epsilon Nd}{Ds}=
-\frac{d}{Ms}-\sum_{i=0}^k\frac{\epsilon_iM_i}{r_iD_i}-\frac{\epsilon
  Nd}{Ds}+\frac{\epsilon
  Nd}{Ds}=-\frac{d}{Ms}-\sum_{i=0}^k\frac{\epsilon_iM_i}{r_iD_i}= \tilde{e}(v') 
\end{align*}
But $\tilde{e}(v')$ is the rational euler number of the Seifert
fibered piece corresponding to $v'$ in the manifold $M'$ which is the
manifold one gets by cutting $M$ along the edge between $v$ and $v'$,
and gluing in a solid tori in the piece containing $v'$. Then the matrix
\begin{align*}
\begin{pmatrix}
\tilde{e}(v')&&\\
&\ddots&\\
&&&\\
\end{pmatrix}
\end{align*}
is the reduced plumbing matrix for the manifold $M'$. But since the
splice diagram of $M'$ is the same as $\Gamma$ except it has a leaf
instead of the node $v$, it only has $n-1$ nodes. Then by induction the
reduced plumbing matrix of $M'$ is negative definite, so the reduced
plumbing matrix of $M$ is negative definite, and then by theorem 3.1
in \cite{commensurability} $M$ is the link of a complex surface
singularity. 

\end{proof}

\section{Graph orbifolds}\label{graphorbifold}

To prove the second main theorem we need to extend our notions and results
about graph manifolds. The reason is, that in the proof we are
doing induction on our graph manifold, which means we have to cut our
manifold along a torus and glue in solid tori to get some smaller
manifolds in which the statement holds by induction and whose
universal abelian cover contribute pieces to the universal abelian
cover of $M$. The problem is that
we do not always get manifolds when we glue in the solid torus. Already
in the simple case with the following plumbing diagram,
$$\splicediag{8}{30}{
  &&&\\
 \overtag\Circ{-2}{8pt}\lineto[dr]
&&& \overtag\Circ{-2}{8pt}
      \lineto[dl]\\
  &\overtag\Circ{-3}{8pt}\lineto[r]
  &\overtag\Circ{-3}{8pt}&\\    
  \overtag\Circ{-2}{8pt}\lineto[ur]
  &&&\overtag\Circ{-2}{8pt}\lineto[ul]\\
  &&&\hbox to 0 pt{~,\hss} }$$
the spaces one gets by gluing in the solid tori if one cuts along
the central edge are not manifolds. 

Fortunately the space we get when glue in the solid torus is not that
bad, and is what we will call a graph orbifold, which we define as
follows. 
\begin{defn}
 Let $M$ be a $3$ dimensional orbifold. We call $M$ a graph orbifold
 if there exist a collecting of disjoint smoothly embedded tori
 $T_i\subset M$, such that each connected component of $M-\bigcup T_i$
 is an $S^1$ orbifold bundle over orbifold surfaces.  
\end{defn}
It is clear that if a connected component of $M-\bigcup T_i$ is smooth,
then it is a Seifert fibered manifold, hence if $M$ is smooth it is a
graph manifold. 

We want to define the splice diagram of a rational homology graph
orbifold. But to do this we have to consider which homology we are
going to use. Remember that if $M$ is smooth then
$\pi_1^{orb}(M)=\pi_1(M)$ where $\pi_1^{orb}(M)$ is the orbifold
fundamental group defined by Thurston see e.g.\ \cite{scott}. So in
the case of smooth 
manifolds orbifold coverings and coverings are the same. We need
orbifold coverings, and the interesting homology group is then
$H_1^{orb}(M)$, which 
for our purpose it is enough to define as the abelianization of
$\pi_1^{orb}(M)$, since it governs the abelian orbifold covers of
$M$. It should be mentioned that there exists a de Rham theorem for
orbifold cohomology with rational coefficients, which says that
$H_{orb}^*(X;\Q)\cong H^*(X;\Q)$. So an orbifold is a rational
homology sphere as an orbifold if and only if its underlying space is 
a rational homology sphere. Orbifolds
also satisfies Poincare duality with rational coefficients. See e.g.\
\cite{orbifold} for these results. 

Next we look at the decomposition of a graph orbifold $M$ into fibered
pieces. To have 
a unique decomposition we do it the following way. Start by removing
a solid torus neighborhood of each orbifold curve 
$K_j\subset N_j\subset M$, i.e.\ a curve along which $M$ is not a manifold. Let
$M'=M-\bigcup_{j=1}^m N_j$, then $M'$ is a graph manifold with $m$
torus boundary components. We then take the JSJ decomposition of $M'$,
and  glue 
the $N_j$'s back in the pieces of the JSJ decomposition of $M$. This
give us our decomposition of $M$ into fibered pieces. It is unique
since the JSJ decomposition of $M'$ is unique. 



To define the splice diagram $\Gamma(M)$ of a graph orbifold $M$,
we start by taking a node for each connected component of
$M-\bigcup_{i=1}^n T_i$, where the set of $T_i$ comes from the
decomposition we defined above.
We then connect two nodes in
$\Gamma(M)$ if the corresponding connected components of
$M-\bigcup_{i=1}^n T_i$ are glued along a torus. We add a leaf at a
node for each singular fiber of the $S^1$ orbifold bundle over the
orbifold surface $\Sigma$, this is the same as adding a leaf for each
point in $\Sigma$ which does not have trivial isotropy group.

To put decorations on $\Gamma(M)$ we do the same as in the splice
diagram, except that where for a manifold we used the first singular
homology group, we now use the first orbifold homology group. That
is, to
get decorations on a edge we cut $M$ along the corresponding torus,
glue in a solid torus in the same way as for manifolds and take the
order of the first orbifold homology group of the new graph orbifold
as the decoration, and at nodes we put the sign of the linking number
of two non singular fibers of the $S^1$ fibration corresponding to the
node.

Let us take a closer look at the orbifold curves. In any 3 dimensional
orbifold $M$, an orbifold curve $K\subset M$ is a embedding of $S^1$
such that there exist a neighborhood $N_K$ of $K$ and $N_K-K$ is
smooth. Now $N_K$ can be chosen to be topological a solid torus, and in this
case $H_1^{orb}(N_K)=\Z\oplus \Z/p\Z$. We call $p$ the orbifold degree
of $K$. Another way to view $N_K$ is as a $S^1$ fibration over a disk $D_\alpha$
with one orbifold point where the isotropy group is
$\Z/\alpha\Z$. Then $N_K$ is defined by a integer $\beta$ which tells
you how much the fibers over the non orbifolds points wrap around the
singular fiber over the orbifold point. Now if $\gcd(\alpha,\beta)=1$
then $N_K$ is in fact smooth, and $K$ is not a orbifold curve,
but a singular fiber of the Seifert fibration in a neighborhood of
$K$. In general a calculation shows that if $K$ is a orbifold curve of
degree $p$, then $\gcd(\alpha,\beta)=p$.

 Next look at how $N_K$ is glued to
$M'=\overline{M-N_K}$. We have a collar neighborhood
$U=(0,1]\times T^2$ of $\partial (M')$. The fibration of $N_K\to
D_\alpha$ gives a fibration $\partial N_K\to S^1$, this again gives a
fibration of $\partial U$ which can be extended to all of $U$. The
image of a meridian $N_K$ in $\partial U$ defines a simple closed
curve transverse to the fibration. By a meridian of $N_K$ we mean a
simple closed curve of the boundary, that is transverse to the
fibration and has homology class of finite order in $H_1^{orb}(N_K)$. The
fibration on $U$ and the simple closed curve transverse to the
boundary uniquely describe a way to glue in a solid torus in the
boundary of $U$ to make it an Seifert fibered manifold, we call this
manifold $M_K$. Note that the gluing maps $\morf{\phi}{\partial
  M'}{\partial N_K}$ and $\morf{\phi'}{\partial
  M'}{\partial (S^1\times D^2)}$ are the same, and it is therefore also
clear that as topological spaces $M$ and $M_K$ are the same.   
   
\begin{prop}\label{orbhom}
Let $K\subset M$ be an orbifold curve of degree $p$ in a rational
homology sphere orbifold $M$. Then $\num{H_1^{orb}(M)}=p\num{H_1^{orb}(M_K)}$. 
\end{prop}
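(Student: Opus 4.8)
The plan is to view both $M$ and $M_K$ as fillings of $M'=\overline{M-N_K}$, a compact graph orbifold (possibly with further orbifold curves) having a single torus boundary component $T^{2}=\partial M'=\partial N_K$: filling $T^{2}$ by $N_K$ gives $M$, while filling it by a smooth solid torus $S^{1}\times D^{2}$ glued along the \emph{same} slope --- namely the meridian $m$ of $N_K$, which is the content of the remark that $\phi$ and $\phi'$ agree --- gives $M_K$. Thus $M$ and $M_K$ differ only in that $M$ carries the $\Z/p\Z$ orbifold structure along $K$; in particular the underlying space of $M_K$ equals that of $M$, which is a rational homology sphere (the orbifold $M$ being one iff its underlying space is), so $M_K$ too is a rational homology sphere orbifold and $H_1^{orb}(M_K)$ is finite.

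First I would record two homology computations. Applying the Seifert--van Kampen theorem for orbifold fundamental groups to $M=M'\cup_{T^{2}}N_K$ and abelianising gives
\[
H_1^{orb}(M)\;\cong\;\operatorname{coker}\!\big(H_1(T^{2})\xrightarrow{\,(i_*,-j_*)\,}H_1^{orb}(M')\oplus H_1^{orb}(N_K)\big),
\]
where $i\colon T^{2}\hookrightarrow M'$ and $j\colon T^{2}\hookrightarrow N_K$ are the inclusions, and likewise for $M_K$ with $H_1(S^{1}\times D^{2})=\Z$ in place of $H_1^{orb}(N_K)$. The local input needed about $N_K$ is that $H_1^{orb}(N_K)=\Z\oplus\Z/p\Z$ (as already noted before the proposition), that $j_*$ is surjective with $j_*[m]$ of order exactly $p$ generating the torsion subgroup, and that a curve $m^{*}$ completing $m$ to a basis of $H_1(T^{2})$ is sent by $j_*$ to a generator of the free part; I would verify this from the description of $N_K$ recalled above (its meridian disk meets $K$ once and is therefore an orbifold disk with a single $\Z/p\Z$ cone point). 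Substituting: the relation coming from $m^{*}$ eliminates the free generator of $H_1^{orb}(N_K)$, and the relation coming from $m$ then identifies its order-$p$ torsion generator with $i_*[m]$, giving
\[
H_1^{orb}(M)\;\cong\;H_1^{orb}(M')\big/\langle\,p\,i_*[m]\,\rangle ,\qquad H_1^{orb}(M_K)\;\cong\;H_1^{orb}(M')\big/\langle\,i_*[m]\,\rangle ,
\]
the second because the meridian $m$ bounds a disc in $S^{1}\times D^{2}$.

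The crux is that $i_*[m]$ has infinite order in $H_1^{orb}(M')$, and this is where the rational-homology-sphere hypothesis is used. Since the underlying space of $M'$ is a compact orientable $3$-manifold with non-empty torus boundary it has first Betti number $\ge 1$, and $H_1^{orb}(M')$ differs from $H_1$ of that underlying space only by a finite torsion subgroup (the classes of meridians of the other orbifold curves), so $\operatorname{rank}H_1^{orb}(M')\ge 1$; were $i_*[m]$ a torsion class, the quotient $H_1^{orb}(M_K)=H_1^{orb}(M')/\langle i_*[m]\rangle$ would still be infinite, contradicting $M_K$ being a rational homology sphere orbifold. Writing $g:=i_*[m]$ (of infinite order) and $G:=H_1^{orb}(M')$, we have $\langle pg\rangle\subset\langle g\rangle\subset G$ with $\langle g\rangle/\langle pg\rangle\cong\Z/p\Z$, and $G/\langle pg\rangle$ is an extension of the finite group $G/\langle g\rangle$ by $\Z/p\Z$, hence finite; the third isomorphism theorem then yields
\[
\num{H_1^{orb}(M)}=\num{G/\langle pg\rangle}=\num{G/\langle g\rangle}\cdot\num{\langle g\rangle/\langle pg\rangle}=p\,\num{H_1^{orb}(M_K)} .
\]
The step I expect to be the real obstacle is the careful identification of $j_*$ on $H_1(\partial N_K)$ --- pinning down that the orbifold meridian $m$ is exactly the slope killed by the honest filling and that $j_*[m]$ has order exactly $p$ in $H_1^{orb}(N_K)$, not a proper divisor of $p$; everything after that is formal.
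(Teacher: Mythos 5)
Your argument is correct and follows essentially the same route as the paper: both compare the two fillings of $M'=\overline{M-N_K}$ along the boundary torus via Mayer--Vietoris/van Kampen, using $H_1^{orb}(N_K)=\Z\oplus\Z/p\Z$ with the meridian an order-$p$ torsion class glued to the same slope in each case. The only difference is packaging: the paper runs the snake lemma on a map between the two short exact sequences to obtain $0\to\Z/p\Z\to H_1^{orb}(M)\to H_1^{orb}(M_K)\to 0$, whereas you compute both groups as explicit quotients $H_1^{orb}(M')/\langle p\,i_*[m]\rangle$ and $H_1^{orb}(M')/\langle i_*[m]\rangle$ and count via the third isomorphism theorem, your check that $i_*[m]$ has infinite order playing the role of the injectivity of the Mayer--Vietoris map for $M_K$ (i.e.\ of $M_K$ being a rational homology sphere) in the paper's version.
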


\begin{proof}
We get the following exact sequence from the Meyer-Vietoris sequence
of the cover of $M$ by $M'$ and $N_K$ 
\begin{align}
0\to\Z^2\xrightarrow{i_*} H_1^{orb}(M')\oplus \Z\oplus\Z/p\Z\to H_1^{orb}(M)\to 0
\end{align}
by using that $H_1^{orb}(N_K)=\Z\oplus \Z/p\Z$. The zero follows since
$M$ is a rational homology sphere, hence $H_2^{orb}(M)$ has to be
finite, and therefore have image zero in $\Z^2$. We likewise get the
exact sequence
\begin{align}
0\to\Z^2\xrightarrow{i_*'} H_1^{orb}(M')\oplus \Z\to H_1^{orb}(M_K)\to 0
\end{align}
from the Meyer-Vietoris sequence of $M_K$ by the cover of $M'$ and
$S^1\times D^2$. Now $i_*=(i_*',g)$ where the image of $g$ is in
$\{0\}\times\Z/p\Z$ from the way we constructed $M_K$
above. We have the following maps $\morf{\pi}{H_1^{orb}(M')\oplus
  \Z\oplus\Z/p\Z}{H_1^{orb}(M')\oplus \Z}$ given by
$\pi(a,b,c)=(a,b)$ and $\morf{f_*}{H_1^{orb}(M)}{H_1^{orb}(M_K)}$ which
  is the map induced on orbifold homology groups, by the homeomorphism
  $\morf{f}{M}{M_K}$ which is the identity on the compliment of
  $N_K$. Note that $\morf{f\vert_{M'}}{M'}{M'}$ is the identity and
  $\morf{f\vert_{N_K}}{N_K}{S^1\times D^2}$ induces the map from
  $\Z\bigoplus\Z/p\Z$ to $\Z$ given by $(b,c)=b$, so we have the
  following map of short exact sequences
\begin{align}\label{sec1}
\xymatrix{
0\ar[r]&\Z^2\ar[r]^-{i_*}\ar[d]^\cong &H_1^{orb}(M')\oplus
\Z\oplus\Z/p\Z\ar[r]\ar[d]^\pi &H_1^{orb}(M)\ar[r]\ar[d]^{f_*} &0\\ 
0\ar[r] &\Z^2\ar[r]^-{i_*'} &H_1^{orb}(M')\oplus \Z\ar[r] &H_1^{orb}(M_K)\ar[r] &0
}.
\end{align}
Using the snake lemma on \eqref{sec1} we get the following short exact
sequence.
\begin{align} 
0\to\Z/p\Z\to H_1^{orb}(M)\to H_1^{orb}(M_K)\to 0
\end{align}
and since this is a short exact sequence of finite abelian groups, the
order of the group in the middle is the product of the order of the
other two groups, which proves the theorem.
\end{proof}

\begin{cor}
Let $M$ be a rational homology sphere graph orbifold and
$\overline{M}$ be its underlying topological manifold then
$\num{H_1^{orb}(M)}=P\num{H_1(\overline{M})}$, where $P$ is the
product of the degrees of all orbifold curves in $M$. 
\end{cor}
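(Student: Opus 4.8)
The plan is to prove the corollary by induction on the number $m$ of orbifold curves of $M$, peeling them off one at a time with Proposition \ref{orbhom}. The base case $m=0$ is the observation that a rational homology sphere graph orbifold with no orbifold curves is a smooth manifold, so $M=\overline{M}$ and $H_1^{orb}(M)=H_1(\overline{M})$ by the remark that $\pi_1^{orb}(M)=\pi_1(M)$ in the smooth case; here the empty product $P$ is $1$, so the formula holds.

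For the inductive step I would fix an orbifold curve $K\subset M$ of degree $p$ and pass to the orbifold $M_K$ constructed just before Proposition \ref{orbhom}. Proposition \ref{orbhom} gives $\num{H_1^{orb}(M)}=p\num{H_1^{orb}(M_K)}$, so it suffices to identify $M_K$ as a rational homology sphere graph orbifold with one fewer orbifold curve, whose remaining orbifold curves have the same degrees as those of $M$, and whose underlying manifold is still $\overline{M}$. Granting this, if $P'$ denotes the product of the degrees of the orbifold curves of $M_K$ then $P=pP'$, the induction hypothesis applied to $M_K$ gives $\num{H_1^{orb}(M_K)}=P'\num{H_1(\overline{M})}$, and multiplying by $p$ yields $\num{H_1^{orb}(M)}=P\num{H_1(\overline{M})}$.

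The step that needs care is exactly the bookkeeping about $M_K$. It is already noted above that $M$ and $M_K$ are the same as topological spaces, so $\overline{M_K}=\overline{M}$; since the rational de Rham theorem for orbifolds makes ``rational homology sphere'' a condition on the underlying space, $M_K$ is again a rational homology sphere, and it is a graph orbifold because one only replaces the fibered solid torus $N_K$ by a Seifert fibered solid torus along the same gluing map. The substantive point is that the modification is supported in $N_K$, a neighborhood of $K$ that meets no other orbifold curve and that becomes smooth after the surgery, so the orbifold curves of $M_K$ are precisely those of $M$ other than $K$; their degrees are unchanged since the orbifold degree of a curve is detected by $H_1^{orb}$ of a small solid torus neighborhood of it, lying away from $N_K$. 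This locality argument is the one real obstacle; everything else is the two-line computation above.
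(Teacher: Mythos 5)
Your proposal is correct and is exactly the argument the paper intends: the corollary is stated without proof precisely because it follows by iterating Proposition \ref{orbhom} over the orbifold curves, using that $M_K$ has the same underlying space and one fewer orbifold curve, with the smooth case $H_1^{orb}(M)=H_1(\overline{M})$ as the base. Your extra care about the locality of the surgery in $N_K$ is a reasonable point to make explicit, but it does not change the route.
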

 
We say that a edge weight $r$ of a splice diagram sees a leaf $v$ of
that splice diagram if, when we delete the node which $r$ is adjacent
to, the leaf $v$ and the edge which $r$ is on are in the same connected
component. 

\begin{cor}
The splice diagram $\Gamma(M)$ is equal to the splice diagram
$\Gamma(\overline{M})$ except if a edge weight sees a leaf
corresponding to an orbifold curve of $M$ it is multiplied by the
degree of the orbifold curve. 
\end{cor}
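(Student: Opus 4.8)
The plan is to compare $\Gamma(M)$ and $\Gamma(\overline{M})$ piece by piece. First note that $\overline{M}$ is by definition the underlying topological space of $M$, that the decomposition tori $T_i$ chosen for $M$ also decompose $\overline{M}$, and that the underlying space of each $S^1$‑orbifold bundle piece is a Seifert fibered manifold; hence $\Gamma(M)$ and $\Gamma(\overline{M})$ have the same underlying tree — one node per fibered piece, one edge per gluing torus, one leaf per non‑trivial isotropy point. The one point needing a convention: an orbifold curve of degree $p$ with local Seifert data $(\alpha,\beta)$, $\gcd(\alpha,\beta)=p$, has $\alpha=p\alpha'$, and in $\overline{M}$ it becomes a Seifert singular fiber of multiplicity $\alpha'$, which is a genuine leaf when $\alpha'>1$ and which we record as a weight‑one leaf when $\alpha'=1$ so that the trees genuinely match. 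Finally $\overline{M}$ is a rational homology sphere graph manifold, either by the preceding corollary (which forces $H_1(\overline{M})$ finite) or by the quoted de Rham theorem for orbifold cohomology, so $\Gamma(\overline{M})$ makes sense.

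Next I would dispatch the signs at the nodes. The decoration at a node $v$ is the sign of $\lk(H,H')$ for two non‑singular fibers of the $S^1$‑fibration at $v$, and such fibers lie in the smooth locus of $M$, disjoint from every orbifold curve. Choosing a $2$‑chain $C'$ in $\overline{M}$ with $\partial C'=d'H'$, the same $C'$ is a $2$‑chain in $M$; since the linking number is independent of the choice of bounding chain (and of $d'$) and the relevant intersection numbers are computed in the common underlying space away from the $1$‑dimensional singular locus, $\lk_M(H,H')=\lk_{\overline{M}}(H,H')=\tfrac{1}{d'}H\bullet C'$. In particular $\epsilon_v$ is unchanged.

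The heart of the argument is the edge weights. Fix a node $v$ and an edge $e$ at $v$, with corresponding torus $T$; let $M_{ve}$ be the piece of $M$ cut out by $T$ on the side away from $v$, and let $X_{ve}$ be the orbifold obtained by gluing in the solid torus in the prescribed way (meridian to the fiber on $T$, longitude to the JSJ curve on $T$). Since $T$ lies in the smooth locus, the fiber and the JSJ curve are the same curves on $T$ whether $T$ is viewed in $M$ or in $\overline{M}$, so the underlying manifold $\overline{X_{ve}}$ coincides with the manifold $(\overline{M})_{ve}$ obtained from $\overline{M}$ by the corresponding cut‑and‑glue, whose first homology by definition computes the edge weight $d_{ve}$ of $\Gamma(\overline{M})$. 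Applying Proposition~\ref{orbhom} and the corollary following it to the orbifold $X_{ve}$, whose orbifold curves are exactly the orbifold curves of $M$ contained in $M_{ve}$ — equivalently, exactly the orbifold‑curve leaves of $\Gamma(M)$ seen by the weight $d_{ve}$ — gives
\[ d_{ve}(\Gamma(M))=\num{H_1^{orb}(X_{ve})}=\Big(\prod_{\text{orbifold curves seen by }d_{ve}}\deg\Big)\,\num{H_1(\overline{X_{ve}})}=\Big(\prod_{\text{orbifold curves seen by }d_{ve}}\deg\Big)\,d_{ve}(\Gamma(\overline{M})), \]
which is precisely the asserted relation. For a leaf edge this is consistent with a direct application of Proposition~\ref{orbhom}: the far side is a Seifert‑fibered solid torus closed up, with $\num{H_1^{orb}}=p\alpha'=\alpha$, matching $\alpha'$ (the weight in $\Gamma(\overline{M})$) multiplied by the degree $p$.

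The step I expect to be the main obstacle is the identification $\overline{X_{ve}}=(\overline{M})_{ve}$, i.e.\ checking carefully that the recipe ``glue the solid torus in the same way as for manifolds'' depends only on the underlying space together with the $S^1$‑fibration near $T$, so that the orbifold and manifold cut‑and‑glue constructions produce the same underlying manifold; alongside this is the bookkeeping pinning down which orbifold curves land in $M_{ve}$ and matching them with the leaves seen by $d_{ve}$. The only other delicate point is the weight‑one‑leaf convention when an orbifold curve degenerates to a non‑singular fiber of $\overline{M}$, which I would state explicitly at the outset.
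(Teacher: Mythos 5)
Your argument is correct and is essentially the paper's proof: the paper disposes of this corollary in one sentence by observing that an edge weight seeing a leaf means the corresponding orbifold curve lies in the cut-off piece whose first orbifold homology computes that weight, so Proposition~\ref{orbhom} and its corollary supply exactly the factor of the degree. Your additional checks (matching the underlying trees, the node signs, and the identification of the underlying manifolds of the cut-and-glued pieces) are sound elaborations of what the paper leaves implicit.
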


\begin{proof}
If an edge weight $r$ sees a leaf then the orbifold curve
corresponding  to that leaf is in the orbifold piece whose order
of the first homology group gives $r$.
\end{proof} 

\begin{cor}\label{cedgedeterminanteq}
Assume that we have an edge in
  our $\Gamma(M)$ between two nodes. Let $T$ be the torus
  corresponding to the edge and $p$ the intersection number in $T$ of
  non-singular
  fibers from each of the sides of $T$. Let $d=\num{H_1^{orb}(M)}$, then 
\begin{align}
p=\frac{\num{D}}{d}
\end{align}
where $D$ is the edge determinant of that edge.
\end{cor}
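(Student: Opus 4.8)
The plan is to reduce the statement to the manifold case already established in Corollary~\ref{edgedeterminanteq}, applied to the underlying manifold $\overline{M}$.

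First I would record that $\overline{M}$ is a rational homology sphere graph manifold: the decomposing tori of $M$ descend to tori of $\overline{M}$ whose complementary pieces are the underlying Seifert fibered manifolds of the $S^1$ orbifold bundles, and $M$ is a rational homology sphere as an orbifold precisely when $\overline{M}$ is one, as recalled above. Next I would observe that the fiber intersection number $p$ for the edge in question is the same whether computed in $M$ or in $\overline{M}$: the torus $T$ joins two \emph{nodes}, hence lies away from every orbifold curve (all of which sit at leaves), the fibration is an honest circle fibration near $T$ on both sides, and the relevant fibers restrict to $T$ as literally the same curves. Thus Corollary~\ref{edgedeterminanteq} for $\overline{M}$ gives $p=\num{\overline D}/\num{H_1(\overline M)}$, where $\overline D$ is the edge determinant of the corresponding edge of $\Gamma(\overline M)$.

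Then I would compare $D$ with $\overline D$ using the corollary describing $\Gamma(M)$ in terms of $\Gamma(\overline M)$: the two splice diagrams have the same underlying tree and the same node signs, and each edge weight of $\Gamma(M)$ equals the corresponding weight of $\Gamma(\overline M)$ times the product of the degrees of the orbifold curves whose leaves that weight sees. Label the edge as in the definition of the edge determinant, with $r_0,r_1$ the two weights on it and $N_0,N_1$ the products of the remaining weights at $v_0$ and $v_1$. Let $P_0$ (resp.\ $P_1$) be the product of the degrees of the orbifold curves on the $v_0$-side (resp.\ $v_1$-side) of the edge; since every orbifold curve is a leaf and deleting the edge partitions the leaves into a $v_0$-side class and a $v_1$-side class, the product of all degrees is $P=P_0P_1$. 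Unwinding ``sees a leaf'': deleting $v_0$ puts the edge carrying $r_0$ into the component of $v_1$, so $r_0$ picks up the factor $P_1$, and symmetrically $r_1$ picks up $P_0$; the edges contributing to $N_0$ are those at $v_0$ other than the central edge, and after deleting $v_0$ their components together contain exactly the leaves on the $v_0$-side, so $N_0$ picks up $P_0$, and symmetrically $N_1$ picks up $P_1$. Hence
\begin{align*}
D=r_0r_1-\epsilon_0\epsilon_1N_0N_1=P_0P_1\bigl(\overline r_0\,\overline r_1-\epsilon_0\epsilon_1\overline N_0\,\overline N_1\bigr)=P\,\overline D,
\end{align*}
so $\num D=P\,\num{\overline D}$. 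Finally, the corollary following Proposition~\ref{orbhom} gives $\num{H_1^{orb}(M)}=P\,\num{H_1(\overline M)}$ with the same $P$, so
\begin{align*}
p=\frac{\num{\overline D}}{\num{H_1(\overline M)}}=\frac{P\,\num{\overline D}}{P\,\num{H_1(\overline M)}}=\frac{\num D}{\num{H_1^{orb}(M)}}=\frac{\num D}{d}.
\end{align*}
The one place needing care is the bookkeeping in the third paragraph: one must check that each of $r_0$, $r_1$, $N_0$, $N_1$ acquires exactly one of the factors $P_0,P_1$ so that $P=P_0P_1$ factors out of both terms of the edge determinant. This rests on the precise meaning of ``an edge weight sees a leaf'' together with the fact that every leaf lies on exactly one side of an edge joining two nodes.
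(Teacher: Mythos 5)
Your proof is correct and follows the same route as the paper's: reduce to Corollary \ref{edgedeterminanteq} for $\overline{M}$ via the invariance of $p$ under passing to the underlying manifold, then cancel the common factor $P$ using $\num{H_1^{orb}(M)}=P\num{H_1(\overline{M})}$. The bookkeeping you carry out in the third paragraph is exactly what the paper compresses into the phrase ``each term of $D$ sees each orbifold curve once,'' so you have simply made the paper's argument explicit.
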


\begin{proof}
Since $T$ is in the smooth part of $M$, the fiber intersection number
is the same in $M$ and $M'$, and hence the same in $\overline{M}$. The
equation holds in $\overline{M}$ by \ref{edgedeterminanteq}, and, since
each term of $D$ sees each orbifold curve once, it holds in  $M$.
\end{proof}

\begin{cor}\label{ceulernumber}
Let $v$ be a node in a splice diagram decorated as in Fig.\ 1
with $r_i\neq 
0$ for $i\neq 1$, let $e(v)$ be the rational euler number of the 
$S^1$ fibered orbifold piece corresponding to $v$, then
\begin{align}  
e(v)=-d\big(\frac{\epsilon s_1}{ND_1\prod_{j=2}^kr_k}+
\sum_{i=2}^k\frac{\epsilon_iM_i}{r_iD_i}\big) 
\end{align}
where $d=\num{H_1^{orb}(M)}$ and $D_i$ is the edge determinant associated to
the edge between $v$ and $v_i$.
\end{cor}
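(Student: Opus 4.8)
The plan is to reduce the statement to its manifold analogue, Proposition~\ref{eulernumber}, applied to the underlying graph manifold $\overline{M}$, just as Corollary~\ref{cedgedeterminanteq} was reduced to Corollary~\ref{edgedeterminanteq}. The first point to establish is that the rational euler number $e(v)$ is the same whether we compute it for the $S^1$-fibered orbifold piece of $M$ at $v$ or for the Seifert fibered piece of $\overline{M}$ corresponding to the same node. To see this, recall that $e(v)$ is the rational euler number of the closed (orbifold) Seifert space obtained by gluing solid tori into the boundary tori of the piece; these tori lie in the smooth part of $M$, so the gluing curves are the same for $M$ and $\overline{M}$, and the closed space built from $\overline{M}$ is the underlying space of the one built from $M$. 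Passing to the underlying space only alters the exceptional fibers coming from orbifold curves of $M$, where, by the local description recalled before Proposition~\ref{orbhom}, a cone point with unnormalized invariant $(\alpha,\beta)$ and $\gcd(\alpha,\beta)=p$ becomes an honest exceptional fiber with invariant $(\alpha/p,\beta/p)$; since the rational euler number is $-\sum_i\beta_i/\alpha_i$ and $\beta/\alpha=(\beta/p)/(\alpha/p)$, it is unchanged. Hence $e(v)$ equals the rational euler number of the piece of $\overline{M}$ at $v$.

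Now I would apply Proposition~\ref{eulernumber} to $\overline{M}$, which is again a rational homology sphere graph manifold. Writing $\bar d=\num{H_1(\overline{M})}$ and using a bar for the data of $\Gamma(\overline{M})$ (the signs $\epsilon,\epsilon_i$ are unchanged, as the edge weights of $\Gamma(M)$ and $\Gamma(\overline{M})$ differ only by positive factors), this gives
\begin{align*}
e(v)=-\bar d\Big(\frac{\epsilon\bar s_1}{\bar N\bar D_1\prod_{j=2}^k\bar r_j}+\sum_{i=2}^k\frac{\epsilon_i\bar M_i}{\bar r_i\bar D_i}\Big),
\end{align*}
which is legitimate because $\bar r_i\neq 0$ for $i\neq 1$ (as $r_i\neq 0$ by hypothesis and $r_i$, $\bar r_i$ differ by a positive factor). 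It then remains to convert this into the data of $\Gamma(M)$. Let $K$ run over the orbifold curves of $M$, of degrees $p_K$, and put $P=\prod_K p_K$. By the first corollary to Proposition~\ref{orbhom} we have $d=\num{H_1^{orb}(M)}=P\bar d$; by the second, every edge weight $w$ of $\Gamma(M)$ equals $\bar w$ times the product of the $p_K$ over the orbifold curves that $w$ sees; and by the argument in the proof of Corollary~\ref{cedgedeterminanteq}, each edge determinant satisfies $D_i=P\bar D_i$, since both of its two terms see every orbifold curve exactly once.

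Finally I would substitute these relations into the displayed formula and check that the scaling factors cancel term by term. The leaves of $\Gamma$ split into those at $v$ and, for each $i$, those in the branch of $v_i$; let $\Pi_v$, respectively $B_i$, be the product of $p_K$ over the orbifold curves lying in these sets, so that $P=\Pi_v\prod_{i=1}^kB_i$. In the first term, the numerator $d\bar s_1$ picks up a factor $P\cdot(P/B_1)$, since $s_1$ sees exactly the leaves not in the branch of $v_1$, while the denominator $ND_1\prod_{j=2}^kr_j$ picks up $\Pi_v\cdot P\cdot\prod_{j=2}^kB_j=P\cdot(P/B_1)$; these cancel. In the $i$-th term with $i\geq 2$, the numerator $dM_i$ picks up $P\cdot B_i$, since $M_i$ sees exactly the leaves in the branch of $v_i$, and the denominator $r_iD_i$ picks up $B_i\cdot P$; again they cancel. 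Thus the right-hand side of the formula is unchanged when the data of $\Gamma(\overline{M})$ is replaced by that of $\Gamma(M)$ and $\bar d$ by $d$, giving the asserted identity. I expect the only genuinely delicate step to be the first one, the invariance of the rational euler number under passage to the underlying space, which relies on the local structure of a neighbourhood of an orbifold curve; the remaining work, keeping track of which orbifold curves each edge weight and edge determinant sees, is bookkeeping.
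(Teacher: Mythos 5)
Your proposal is correct and follows essentially the same route as the paper's own (very brief) proof: reduce to Proposition \ref{eulernumber} applied to $\overline{M}$, using that the rational euler number of the piece at $v$ is unchanged on passing to the underlying space, and that the orbifold degrees cancel between numerator and denominator. Your version simply fills in the details the paper leaves implicit (the local $(\alpha,\beta)\mapsto(\alpha/p,\beta/p)$ argument and the term-by-term bookkeeping of which weights see which orbifold curves), and that bookkeeping checks out.
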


\begin{proof}
Since the rational euler number associated to the node is the same in
$M$ and $\overline{M}$ and the formula holds for $\overline{M}$ by
\ref{eulernumber}, it
follows by noticing that the same orbifold degrees show up in the
numerator and the denominator. 
\end{proof}

\section{Proof of the Second Main Theorem}\label{proof2}

A splice diagram for a Seifert fibered manifold $M$, which has Seifert
invariant $(0;(\alpha_1,\beta_1),\dots ,(\alpha_n,\beta_n))$ look
like this
$$\splicediag{8}{30}{
   \Circ&&\\
  \Vdots&\overtag\Circ {v} {8pt}\lineto[ul]_(.5){\alpha_1}
  \lineto[dd]_(.5){\alpha_{n-2}}
  \lineto[r]^(.5){\alpha_n}
   \lineto[dr]_(.5){\alpha_{n-1}}&\Circ\\
 &&\Circ\\
 &\Circ& 
\hbox to 0 pt{~,\hss} }$$
and by \ref{eulernumber} the sign of the rational euler number is
equal to minus the sign at the node of the splice diagram. So from the splice diagram, we can read off the
$\alpha_i$'s and the sign of the rational euler number, but this is
exactly the information that determines the universal abelian cover
of $M$. By theorem 8.2 in \cite{neumann832} the universal abelian cover
of $M$ is homeomorphic to the Brieskorn complete intersection
$\sum(\alpha_1,\dots,\alpha_n)$ provided $e<0$. If $e>0$ one has to
compose with a orientation reversing map. The case $e=0$ does not occur for a
rational homology Seifert fibered manifold. We will generalize this
this result to graph manifolds. But to do this we need to prove it for
graph orbifolds. 

A $S^1$ fibered orbifold will have a splice diagram as above in the
case of Seifert fibered manifolds, and its universal abelian cover
will likewise be $\sum(\alpha_1,\dots,\alpha_n)$, but now it follows
from the proof of the above theorem in \cite{geometryss} which also
works when $\gcd(\alpha_i,\beta_i)\neq 1$.

Now this will prove the induction start in most cases but to prove it in
general we need the following lemma.

\begin{lemma}\label{connectedsum}
  Let $\morf{\pi_1}{M}{M_1}$ and $\morf{\pi_2}{M}{M_2}$ be universal
  abelian orbifold covers such that $\deg(\pi_1)=\deg(\pi_2)=d$ and
  both $M_1$ and $M_2$ have an orbifold curve of degree $p$. Let
  $L(n,m_1)$ and $L(n,m_2)$ be orbifold quotients of $S^3$ by $\Z/n\Z$
   which contain orbifold curves of degree
  $p$. Then the universal abelian cover of $L(n,m_1)\#_pM_1$ is
  homeomorphic to $L(n,m_2)\#_pM_2$, where $\#_p$ means taking
  connected sum along $B^3$ that intersects the orbifold curve of
  degree $p$, and the degree of the cover is $nd/p$.
\end{lemma}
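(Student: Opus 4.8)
The plan is to reduce the statement about connected sums along orbifold curves to the behavior of universal abelian covers under the two elementary moves: (1) passing from an orbifold piece to the "capped off" manifold $M_K$ after removing a neighborhood of an orbifold curve, and (2) splicing (connected sum along a curve). The key observation is that $L(n,m_i)\#_p M_i$ has a splice diagram obtained by gluing the splice diagram of $M_i$ to that of $L(n,m_i)$ along the leaves corresponding to the degree-$p$ orbifold curves, with the edge weights adjusted by the orbifold-curve corrections from Corollary~\ref{cedgedeterminanteq} and its neighbors. So the first step is to write down explicitly the splice diagram of $L(n,m_i)\#_p M_i$ and check that it does \emph{not} depend on $i$: the $L(n,m)$ factor is a lens-space orbifold whose splice diagram is a single node with two leaves decorated by $n$ (independent of $m$, since $m$ only affects the $\beta$'s, not the $\alpha$'s), and the connected sum along $B^3$ meeting the orbifold curve identifies the leaf of degree $p$ on each side. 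Thus $\Gamma(L(n,m_1)\#_p M_1)=\Gamma(L(n,m_2)\#_p M_2)$ because $\Gamma(M_1)=\Gamma(M_2)$ (both being covered by $M$ via degree-$d$ universal abelian covers, hence having the same splice diagram by the reasoning behind Theorem~\ref{thm2}).

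Next I would identify the universal abelian cover of $L(n,m_i)\#_p M_i$ concretely. The connected sum along $B^3$ meeting an orbifold curve corresponds, on orbifold fundamental groups, to an amalgamated product over $\Z/p\Z$ (the local group of the orbifold curve along which we glue), so $H_1^{orb}(L(n,m_i)\#_p M_i)$ fits into the pushout of $H_1^{orb}(L(n,m_i))\leftarrow \Z/p\Z \rightarrow H_1^{orb}(M_i)$. Since $H_1^{orb}(L(n,m_i))=\Z/n\Z$ and $\Z/p\Z \hookrightarrow \Z/n\Z$ is the standard inclusion (as $p\mid n$ by hypothesis, the orbifold curve of $L(n,m_i)$ having degree $p$ inside a $\Z/n\Z$-quotient), a Mayer--Vietoris / van Kampen computation gives $|H_1^{orb}(L(n,m_i)\#_p M_i)| = nd/p$, matching the claimed degree. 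Then I would describe the universal abelian cover as the fiber product: over the $M_i$ part it restricts to (copies of) the universal abelian cover $M$ of $M_i$, and over the lens-space part it restricts to covers of $L(n,m_i)$; these get glued along the preimages of the separating $B^3$, which are balls meeting the orbifold curve. The point is that the cover of the orbifold curve of degree $p$ in $M_i$ by its preimage in $M$ is \emph{forced} to have a prescribed number of components and a prescribed local structure determined by $d$ and $p$ alone, so after capping off (using the gluing description of $M_K$ from the paragraph before Proposition~\ref{orbhom}) the two covers of $L(n,m_1)\#_p M_1$ and $L(n,m_2)\#_p M_2$ have pieces that are pairwise homeomorphic and are glued along homeomorphic balls; a standard Seifert-fibered gluing argument (as in the $e<0$ case of \cite{neumann832}, \cite{geometryss}) then produces a homeomorphism between them.

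Concretely the steps, in order, are: (i) compute the splice diagram of $L(n,m)\#_p M'$ for a general orbifold $M'$ with a degree-$p$ curve and observe the independence on $m$; (ii) verify $\Gamma(M_1)=\Gamma(M_2)$ from $\deg(\pi_i)=d$ and the uniqueness of splice diagrams, hence $\Gamma(L(n,m_1)\#_pM_1)=\Gamma(L(n,m_2)\#_pM_2)$; (iii) compute $|H_1^{orb}(L(n,m_i)\#_pM_i)|=nd/p$ via Mayer--Vietoris over $\Z/p\Z$, identifying the cover degree; (iv) build the universal abelian cover as a union of $M$-pieces (over $M_i$) and lens-space-cover pieces, glued along balls meeting the lifted orbifold curves, using the $M_K$-gluing description and the behavior of orbifold curves under covers; (v) check that corresponding pieces for $i=1,2$ are homeomorphic and the gluing data agree (the lens-space covers $S^3/(\Z/(n/?)\Z)$ over $L(n,m_1)$ and $L(n,m_2)$ must be identified, which is where the freedom in $m$ gets absorbed), and conclude by assembling the homeomorphism.

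\textbf{The main obstacle} I expect is step (v): even though the splice diagrams and the covering degrees match, one must check that the \emph{actual} universal abelian cover — not just some abelian cover of the right degree — is reassembled the same way for $i=1,2$, which requires controlling the lens-space-orbifold covers $L(n,m_i)$ up to the orientation-reversing ambiguity and showing the gluing of the $M$-pieces to the lens-space-cover pieces along the lifted balls is independent of $m_i$. This is essentially a bookkeeping problem about how $\Z/p\Z\hookrightarrow\Z/n\Z$ sits inside the full abelianization and how the corresponding sub-covers of $S^3/(\Z/n\Z)$ look; the key input is that a degree-$p$ orbifold curve in a $\Z/n\Z$-quotient of $S^3$ lifts in the universal abelian cover to a standard unknotted circle, so all the ambiguity is concentrated in lens spaces whose homeomorphism type (up to orientation) is already pinned down, exactly as in the base case handled by \cite{neumann832} and \cite{geometryss}.
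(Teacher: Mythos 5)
Your step (ii) rests on a claim that is both unjustified and circular. From the hypotheses that $M_1$ and $M_2$ have homeomorphic universal abelian covers of the same degree $d$ you cannot conclude $\Gamma(M_1)=\Gamma(M_2)$: that would be the \emph{converse} of Theorem~\ref{universalabcover}, which is not asserted anywhere and is not a hypothesis of the lemma. Worse, any appeal to ``same splice diagram $\Rightarrow$ same universal abelian cover'' here is circular, because Lemma~\ref{connectedsum} is an ingredient in the induction proving that theorem (it is exactly what handles the one-node case with a zero-weight leaf). The lemma is deliberately stated so that its conclusion depends only on $M$, $n$, $d$ and $p$, with no relation assumed between $\Gamma(M_1)$ and $\Gamma(M_2)$; your steps (i)--(ii) should be deleted rather than repaired, and fortunately nothing downstream needs them.

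Steps (iii)--(v) do point in the right direction --- the intended proof is precisely ``identify the preimages of the two summands and reassemble'' --- but you stop short of the computations that carry the argument, and you locate the difficulty in the wrong place. What actually has to be shown is: $\pi_i\inv(M_i\setminus B^3_p)$ is connected and, because the degree-$p$ orbifold curve unwinds in the universal abelian cover, each component of its boundary is the $p$-fold branched cover of the separating orbifold sphere, so there are $d/p$ boundary spheres and the piece is $M$ with $d/p$ balls removed --- a space depending only on $M$, $d$, $p$; likewise the preimage of $L(n,m_i)\setminus B^3_p$ in $S^3$ is $S^3$ with $n/p$ balls removed, independent of $m_i$; one then takes $n/p$ copies of the first piece and $d/p$ copies of the second, glues each copy of the first to each copy of the second exactly once, and the Mayer--Vietoris count $\num{H_1^{orb}(L(n,m_i)\#_pM_i)}=nd/p$ certifies that the resulting degree-$nd/p$ abelian cover is the \emph{universal} one. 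Your announced ``main obstacle'' in (v) --- matching gluing data along the lifted spheres and controlling the lens-space covers up to orientation --- is a non-issue: the universal abelian cover of $L(n,m_i)$ is $S^3$ for every $m_i$, and the reassembly is along $2$-spheres, i.e.\ connected sum, which is well defined up to homeomorphism, so no Seifert-fibered gluing argument is required. The genuine content you omit is the component count and the bipartite gluing pattern.
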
 
\begin{proof}
We are going to prove the lemma by constructing the universal abelian
cover of $L(n,m_i)\#_pM_i$, and see it is determined by $M,n,d$ and
$p$. 

Let $B^2_p\subset M_i$ be the ball with a orbifold curve of degree $p$
passing through which we are going to remove to take connected sum. Let
$M'_i=M_i-B^3_p$ and $S^2_{p}=\partial M'_i$. $\pi\inv
(M'_i)=\widetilde{M}_i$ is connected submanifold of $M$ and
$\morf{\pi_1\vert_{\widetilde{M}_i}}{\widetilde{M}_i}{M_i}$ is an abelian
  cover. Now $\pi_i$ restricted to a connected component of
  $\partial\widetilde{M}_i$ is
  the $p$-fold cyclic branched cover of $S^2$, hence the number of
  boundary components of $\widetilde{M}_i$ is $d/p$. So clearly
  $\widetilde{M}_i$ is homeomorphic to $M$ with $d/p$ balls removed,
  and hence does not depend on $M_i$ and $\pi_i$.  If we then look at
  $S^2_{p}=\partial (L(n,m_i)-B^3_p)$ then the preimage under the
  universal abelian cover of $\morf{p_i}{S^3}{L(n,m_i)}$ of
  $L(n,m_i)-B^3_p$, is $S^3 $  with $n/p$ balls removed. Let
  $\widetilde{M}$ be the manifold constructed the following way. Take
  $n/p$ copies of  $\widetilde{M}_i$ and $d/p$ copies of $S^3$ with $n/p$ balls
  removed. Then glue each of the $\widetilde{M}_i$ to each of the
  $S^3$'s exactly once, to form $\widetilde{M}$. Since $\pi_i$ and
  the universal abelian cover map from $S^3$ to $L(n,m_i)$ agrees on
  boundary components, we get an abelian cover
  $\morf{\tilde{\pi}_i}{\widetilde{M}}{L(n,m_i)\#_pM_i}$ of degree
  $nd/p$, by letting $\tilde{\pi}_i$ be equal to $\pi_i$ on each of
  the $\widetilde{M}_i$ components and to the $p_i$ on the $S^3$
  components. 

Using Meyer-Vietoris sequence we get that
$\num{H_1^{orb}(L(n,m_i)\#_pM_i)}=nd/p$ hence
$\morf{\tilde{\pi}_i}{\widetilde{M}}{L(n,m_i)\#_pM_i}$ is the
universal abelian cover, which proves the lemma since the
homeomorphism type for $\widetilde{M}$ only depends on $M,n,d$ and $p$. 
\end{proof}

Remark that the above theorem is not true if we took connected sum
along spheres with different degrees of the orbifold points, e.g\
$L(6,3)\#_3L(6,3)$ has universal abelian cover $S^1\times S^2$, but
the universal abelian cover of $L(6,3)\#L(6,3)$ has first homology
group of rank $25$. Since the splice diagram cannot see the orbifold
curve we are going to take connected sum along, this forces us to make
the assumption on our graph orbifolds in the theorem
below. Alternatively one could make a new definition of splice diagram,
where at leaves of weight zero one  specifies the
degree of the orbifold
curve in the solid torus corresponding to the leaf. The theorem then
holds for all graph orbifolds with this invariant. 

\begin{thm}\label{universalabcover}
If $M$ and $M'$ are two rational homology sphere graph orbifolds having
the same splice diagram $\Gamma$, and assume that all solid tori
corresponding to leaves of weight zero do not have orbifold curves.
Then $\widetilde{M}$ is homeomorphic
to $\widetilde{M}'$, where $\morf{\pi}{\widetilde{M}}{M}$ and
$\morf{\pi'}{\widetilde{M}'}{M'}$ are the universal abelian orbifold
covers.  
\end{thm}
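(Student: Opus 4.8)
The plan is to argue by induction on the number of nodes of $\Gamma$, in the spirit of the proof of the first main theorem: at each step one peels off an end node, recognises its piece as an $S^1$-fibered orbifold whose universal abelian cover is a Brieskorn manifold read off from $\Gamma$, and then reassembles $\widetilde M$ from the universal abelian covers of the pieces by a gluing dictated only by $\Gamma$. For the base case $\Gamma$ has a single node, $M$ is an $S^1$-fibered orbifold over an orbifold $2$-sphere with the star-shaped diagram displayed at the start of this section, its leaf weights are the $\alpha_i$, and by Corollary~\ref{ceulernumber} the sign at the node is $-\sign e$ with $e$ the rational Euler number; so by Neumann's theorem (\cite{neumann832}, and \cite{geometryss} in the non-reduced orbifold case) $\widetilde M\cong\Sigma(\alpha_1,\dots,\alpha_n)$ oriented by the sign of $e$, which is determined by $\Gamma$. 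It should be stressed at the outset that $\num{H_1^{orb}(M)}=\deg\pi$ is \emph{not} determined by $\Gamma$ (already for Seifert manifolds $\Gamma$ forgets the $\beta_i$), so the argument must never use equality of the covering degrees --- only the homeomorphism type of the total space is claimed. If the single node carries a leaf of weight $0$ one is already in the connected-sum situation and invokes Lemma~\ref{connectedsum}.

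For the inductive step let $\Gamma$ have $n\ge 2$ nodes and choose an end node $v$, that is, a node all of whose incident edges but one lead to leaves; this exists because $\Gamma$ is a tree. Let $e$ be the remaining edge, $v'$ its other endpoint, $T\subset M$ the corresponding torus, and let $r$, $s$ be the weights of $e$ at $v$ and at $v'$ (notation as in the proof of the first main theorem). Cutting $M$ along $T$ and gluing in solid tori the way the splice-diagram construction prescribes produces a one-node graph orbifold $M_v$ and a graph orbifold $M_\star$ with strictly fewer nodes; their splice diagrams are the two halves of $\Gamma$ obtained by cutting $e$, so $M_v$ carries the star at $v$ with one extra leaf of weight $r$, while the splice diagram of $M_\star$ is the $v'$-side of $\Gamma$ with $e$ turned into a leaf of weight $s$. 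In particular both are determined by $\Gamma$. When $r\neq 0$, the edge decoration at $v$ is by definition $r=\num{H_1^{orb}(M_\star)}<\infty$, so $M_\star$ is again a rational homology sphere graph orbifold; the only weight-$0$ leaf it can acquire is the genuine solid torus just glued in, which carries no orbifold curve, so $M_\star$ inherits the standing hypothesis and the inductive hypothesis applies to it, while the base case applies to $M_v$.

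It then remains to reassemble $\widetilde M$ from $\widetilde{M_v}$ and $\widetilde{M_\star}$ and to check that the recipe sees only $\Gamma$. Over the complement of a collar of $T$ the covering $\pi$ restricts to abelian covers of $\Sigma_v$ and of $M_\star$ minus a solid torus; closing up each component --- filling its boundary tori back in --- produces copies of $\widetilde{M_v}$ and of $\widetilde{M_\star}$, and $\widetilde M$ is recovered by gluing these along the tori lying over $T$. The number of copies and the matching of fibres and sections are governed by how the Seifert fibres on the two sides of $T$ sit in first homology; by Corollary~\ref{cedgedeterminanteq} the fibre intersection number on $T$ is $\num{D}/\num{H_1^{orb}(M)}$, and this together with the Euler numbers provided by Corollary~\ref{ceulernumber} for $M_v$ and for the nodes of $M_\star$ --- as well as the propagation of orientation through the node signs --- is all read off from $\Gamma$ (equivalently from the two half-diagrams, which $\Gamma$ determines). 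Since $M$ and $M'$ share $\Gamma$, the pieces $\widetilde{M_v}$ and $\widetilde{M_v'}$ coincide by the base case, $\widetilde{M_\star}$ and $\widetilde{M_\star'}$ coincide by the inductive hypothesis, and the gluing recipes agree, so $\widetilde M\cong\widetilde{M'}$.

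I expect the real obstacle to be the degenerate case, in which one of $r$, $s$, or a leaf weight at $v$ vanishes. If $s=0$ then $e(v)=0$ by Corollary~\ref{ceulernumber}, so the closed-up piece $M_v$ is an $S^1$-orbifold bundle over $S^2$ of Euler number $0$ --- a connected sum of lens-space-type orbifolds rather than a Brieskorn manifold. The meridian of the filling solid torus then has homology class of finite order, the cover fills back in not by solid tori but by copies of $S^1\times D^2$ which can glue up to $S^1\times S^2$ summands, and --- as the remark on $L(6,3)\#_3L(6,3)$ shows --- the homeomorphism type of $\widetilde M$ genuinely depends on the degree of the orbifold curve running through the connect-sum ball, information $\Gamma$ cannot detect. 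This is exactly the situation Lemma~\ref{connectedsum} is built for, and it is here that the standing hypothesis is used: forbidding orbifold curves in the solid tori at weight-$0$ leaves forces the relevant orbifold-curve degree to be one already recorded by the pieces on either side, hence by $\Gamma$, so Lemma~\ref{connectedsum} applies and $\widetilde M$ is once more determined by $\Gamma$. The cases $r=0$ and of a vanishing leaf weight at $v$ are symmetric, completing the induction.
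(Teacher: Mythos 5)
Your overall strategy---induction on the number of nodes, cutting $M$ along a torus corresponding to an edge between nodes, and reassembling $\widetilde M$ from covers of the closed-up pieces---is the same as the paper's, and your treatment of the one-node base case and of the weight-zero degeneration via Lemma~\ref{connectedsum} is in the right spirit. But there is a genuine gap at the central step. You close up the two sides of $T$ ``the way the splice-diagram construction prescribes'' (meridian of the glued solid torus to the fiber from the far side) and assert that the components of $\pi\inv(M_i)$, once their boundary tori are filled in, are the universal abelian covers of these pieces, whose splice diagrams are just the two halves of $\Gamma$ with the cut edge turned into a leaf carrying its old weight. Neither assertion holds. A component of $\pi\inv(M_i)$ is the cover of $M_i$ classified by $\ker\big(H_1^{orb}(M_i)\to H_1^{orb}(M)\big)$, so the closed orbifold $M_i'$ whose universal abelian cover restricts to it must be built by filling so that precisely a generator of that kernel dies; this is a different gluing, and it may force an orbifold curve in the glued-in solid torus when that generator is $p$ times a primitive class. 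One then has to compute the splice diagram of $M_i'$: the new leaf weight is not $r$ or $s$ but $r_i/d_j$, and \emph{every} edge weight that sees the cut edge gets divided by $d_j$, where $d_j$ is the Neumann--Wahl ideal generator of the edge---a splice-diagram invariant by their Theorem~12.9. Without this, your pieces $\widetilde{M_v}$ and $\widetilde{M_\star}$ are simply the wrong covers (their degrees $r$ and $s$ do not in general match the degree of $\pi$ over a component of $\pi\inv(M_i)$, which is $d/d_j$), and the reassembly cannot produce $\widetilde M$.

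The reassembly itself also needs more than you give it: one must extract from $\Gamma$ the number of components over each side (the ideal generators again), the incidence of components (each on one side meets each on the other exactly once, via $H_1^{orb}(M,T^2)\cong H_1^{orb}(M_0,T^2)\oplus H_1^{orb}(M_1,T^2)$), and the actual gluing homeomorphisms, which the paper pins down by computing the rational Euler numbers $\widetilde e_i$ of the covering Seifert pieces. Your appeal to Corollary~\ref{cedgedeterminanteq} is circular here: $p=\num{D}/\num{H_1^{orb}(M)}$ is \emph{not} read off from $\Gamma$, precisely because, as you correctly stress at the outset, $\num{H_1^{orb}(M)}$ is not a splice-diagram invariant. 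The essential point is the cancellation of $d=\num{H_1^{orb}(M)}$ in $\widetilde e_i=\tfrac{\lambda_i}{d_i d}\,e_{v_i}$, using that $e_{v_i}$ is $d$ times a quantity determined by $\Gamma$ (Corollary~\ref{ceulernumber}); that verification is what makes the gluing data splice-diagram determined, and it is missing from your argument.
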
 

\begin{proof}
We will prove the theorem by inductively constructing $\widetilde{M}$
only using information from the splice diagram. 

For the case with one node, this is mostly the theorem from
\cite{neumann832} and \cite{geometryss} cited above, since every
one-node graph orbifold is a 
$S^1$ fibered orbifold, if we have no edge weight of $0$. So we have
to consider the case of a one-node splice diagram with a edge weight
of $0$.

Let $M$ be a orbifold with the following splice diagram

$$\splicediag{8}{30}{
  \Circ&&\\
  \Vdots&\overtag\Circ {v_0} {8pt}\lineto[ul]_(.5){n_1}
  \lineto[dl]^(.5){n_k}
  \lineto[r]^(.25){0}& \Circ\\
  \Circ&&\hbox to 0 pt{~.\hss} }$$
%
%
This orbifold is a $S^3$ connect summed along smooth $S^2$'s with the
orbifold quotients of $S^3$ by $\Z/n_i\Z$
$L(n_1,q_1),\dots, L(n_k,q_k)$, where the pair $(n_i,q_i)$ is the
Seifert invariant of the $i$'th singular fiber. 

One sees this the following way. The leaf with edge weight
zero means that the fibers of the piece corresponding to the central
node bounds a meridional disc in the solid torus $Z$ corresponding to the
leaf of weight zero. Take two fibers $F_1$ and $F_2$ and a simple path $p$
between them in the boundary of the $Z$. Then the region $B\subset Z$
bounded by
all the fibers intersecting $p$ and the meridional discs bounded by
$F_1$ and $F_2$ is a ball. We can now extend $B$ to the boundary of
the solid torus $L$ corresponding to the leaf of weight $n_i$. So by this,
part of the boundary of $B$ is a annulus of fibers in $\partial
L$. Now $L\bigcup B$ is a solid torus glued to a ball
along a strip cross a knot which is a representative of a non trivial
homology class of the boundary of $L$, and hence clearly has
boundary $S^2$. Another way to see this is that the boundary is an
annulus union 2 discs. $L\bigcup B$ includes a singular fiber, hence it is
not a ball, and therefore the $S^2$ is a separating sphere, and
$M=(L\bigcup B)\# (M-L\bigcup B)$. 

What is left is just to see what
$L\bigcup B$ is. To see this we see that the complement of $B$ in $Z$
is a ball. So gluing this ball to $L\bigcup B$ we get the
same orbifold as if we glued $L$ to $Z$, and since $Z$ does not have any
singular fibers it is a quotient of $S^3$ with a orbifold curve with
Seifert invariant $(n_i,q_i)$.      

So by doing this for each of the leaves with non zero weight, we get
that $M$ is connected sum of the $S^3$ quotients $L(n_1,q_1),\dots,
L(n_k,q_k)$ and a central piece $M'$. What is left is to see that the
central piece is $S^3$. If we glue a ball in $M-L\bigcup B$ to make the
closed manifold $M'$, we see that $M'$ is $Z$ glued to a solid torus,
and the gluing map is the same as when we glued $Z$ to $M-Z$. Since
the weight of the leaf corresponding to $Z$ was zero, it implies that
a fiber of $T^2=\partial(M-Z)$ is a generator of $H_1(T^2)$ and is glued
to a meridian of $Z$ and a simple closed curve $c$ corresponding to
the other generator is glued to a longitude of $Z$. This is because
that is how one specifies the gluing of $Z$ to a solid torus to get
 the decoration of the splice diagram, the glued manifold here being
 $S^1\times S^2$ since the weight is zero. But
gluing two solid tori according to the gluing of
$M$ and $Z$ described above creates a $S^3$. 

To show that the universal abelian cover of $M$ is determined by the splice
diagram, we do induction in the number of $S^3$ quotients in $M$, i.e.\ the
number of leaves of the splice diagram of $M$. If there is only one
$S^3$ quotient, then $S^3$ connect sum $L(n,q)$ is just $L(n,q)$, so the
universal abelian cover of $M$ is just $S^3$ and the covering map has
degree $n$, hence determined by the splice diagram.

Let $M'$ be the connected sum of $S^3$ with $L(n_1,q_1),\dots,
L(n_{k-1},q_{k-1})$. Then $M'$ has splice diagram as follows
$$\splicediag{8}{30}{
  \Circ&&\\
  \Vdots&\overtag\Circ {v_0} {8pt}\lineto[ul]_(.5){n_1}
  \lineto[dl]^(.5){n_{k-1}}
  \lineto[r]^(.25){0}& \Circ\\
 \Circ&&\hbox to 0 pt{~.\hss} }$$
So by induction the universal abelian cover $\widetilde{M}'$ of $M'$
and the degree for this universal abelian cover $d$ is determined by
the splice diagram. We also have that $M=L(n_k,q_k)\#M'$ so by Lemma
\ref{connectedsum} the universal abelian cover of $M$ is determined by
$\widetilde{M}'$, $n_k$ and the degree of the cover $\widetilde{M}'\to
M'$ (remember in this case $p=1$). But all this information is given
by the splice diagram, since the splice diagram of $M'$ is determined
by the splice diagram of $M$.

This completes the one node case. For more than one node we will
reduce our case to one with fewer nodes by cutting along a torus in
$M$ corresponding to an edge joining two nodes in $\Gamma$. This is
more complicated than when we cut along a sphere
since what we cut along is now not
simply covered once by itself, but may be multiply covered, and the
gluing of $2$--torus boundary components is not trivial as it is with
$2$--spheres.

Let us assume that $M$ has splice diagram $\Gamma$ with $n>1$ nodes. We
look at a edge between two nodes of the form

$$\splicediag{8}{30}{
  &&&&\\
  \Vdots&\overtag\Circ {v_0} {8pt}\lineto[ul]_(.5){n_{01}}
  \lineto[dl]^(.5){n_{0k_0}}
  \lineto[rr]^(.25){r_0}^(.75){r_1}&& \overtag\Circ{v_1}{8pt}
  \lineto[ur]^(.5){n_{11}}
  \lineto[dr]_(.5){n_{1k_1}}&\Vdots\\
  &&&&\hbox to 0 pt{~.\hss} }$$

Let $T^2\subset M$ be the separating torus, corresponding to the edge
we have chosen. Let $M_i^\circ\subset M-T^2$ be the connected component
containing the node $v_i$, and let $M_i=M_i^\circ\cup T^2$. $M_0$ and $M_1$
are graph orbifolds with one boundary torus
each. $\morf{\pi\lvert_{\pi\inv(M_i)}}{\pi\inv(M_i)}{M_i}$ is a
  (possibly disconnected) abelian covering. Let
  $\widetilde{M}_i\subset \pi\inv(M_i)$ be a connected component. Then
  $\morf{\pi\lvert_{\widetilde{M}_i}}{\widetilde{M}_i}{M_i}$ is a
  connected abelian covering.

To describe the covering
$\morf{\pi\lvert_{\widetilde{M}_i}}{\widetilde{M}_i}{M_i}$ we are
going to construct a closed graph orbifold $M_i'$, with $M_i\subset
M_i'$, such that if $\morf{p}{\widetilde{M}_i'}{M_i'}$ is the
universal abelian cover, then
$\morf{p\lvert_{p\inv(M_i)}}{p\inv(M_i)}{M_i}$ is equal to
$\morf{\pi\lvert_{\widetilde{M}_i}}{\widetilde{M}_i}{M_i}$,
i.e., $\widetilde{M}_i=p\inv(M_i)$ and the maps
$p\lvert_{\widetilde{M}_i}$ and $\pi\lvert_{\widetilde{M}_i}$ agree. 

We first look at $M_0'$. We will construct it from $M_0$ by gluing a
solid torus in the boundary of $M_0$, in a way we will now
explain. Now $M'_0$ has splice diagram   

$$\splicediag{8}{30}{
  &&\\
  \Vdots&\overtag\Circ {v_0} {8pt}\lineto[ul]_(.5){n_{01}}
  \lineto[dl]^(.5){n_{0k_0}}
  \lineto[r]^(.25){r_0'}& \Circ\\
  &&\hbox to 0 pt{~.\hss} }$$
where every weight on the left is as in the splice diagram of $M$  if
it does not see the edge we are cutting along. We
want to determine $r_0'$ and the other weights that see the edge we
are cutting along so that the universal abelian cover has the
desired properties.

To determine $\widetilde{M}_0$ we use that the components of a non
connected abelian cover are determined by the map from
$H_1^{orb}$ of the base space 
to the abelian group which determines the non connected cover. 
So in our case  $\widetilde{M}_0$ is determined by
$H_1^{orb}(M_0)\rightarrow H_1^{orb}(M)$.
One makes $M_0'$
by gluing in a solid torus with a orbifold curve of degree $p$ 
such that the generator of
$\ker\big(H_1^{orb}(M_0)\rightarrow H^{orb}_1(M)\big)$ is the curve
that get killed, i.e.\ $\ker\big(H_1^{orb}(M_0)\rightarrow
H^{orb}_1(M)\big)=\ker\big(H_1^{orb}(M_0)\rightarrow
H^{orb}_1(M_0')\big)$, by gluing the primitive element $\alpha$ such that
$\langle p\alpha\rangle=\ker\big(H_1^{orb}(M_0)\rightarrow
H^{orb}_1(M)\big)$ to a meridian 
of the solid torus with a orbifold curve of degree $p$ and a simple
closed curve with intersection $1$ 
with the generator to a longitude. This ensures that $\widetilde{M}_0$
embeds into the universal abelian cover of $M_0'$. We also gets that 
\begin{align*}
H_1(M'_0)&=\im\big(H_1^{orb}(M_0)\rightarrow
H_1^{orb}(M_0')\big)=H_1^{orb}(M_0)/\ker\big(H_1^{orb}(M_0)\rightarrow
H^{orb}_1(M_0')\big)\\
&=H_1^{orb}(M_0)/\ker\big(H_1^{orb}(M_0)\rightarrow
H^{orb}_1(M)\big)=\im\big(H_1^{orb}(M_0)\rightarrow
H_1^{orb}(M)\big).
\end{align*} 
This last fact is what we want to use to find the
splice diagram of $M'_0$, so we need to determine
$\im\big(H_1^{orb}(M_0)\rightarrow H_1^{orb}(M)\big)$.   

We first determine $\ker(H_1^{orb}(M_0)\rightarrow H_1^{orb}(M))$, by
looking at the Meyer Vietoris sequence of the covering of $M$ by
$M_0$ and $M_1$.
\begin{align}\label{meyervie}
\dots\rightarrow H_2^{orb}(M)\rightarrow H_1(T^2)\rightarrow
H_1^{orb}(M_0)\oplus H_1^{orb}(M_1)\rightarrow H_1(M)\rightarrow\dots 
\end{align}
Since we have Poincare duality with rational coefficients, it follows
that $H_2^{orb}(M)$ is finite, so $H_1^2(T^2)=\Z\oplus\Z$ injects into
$H_1^{orb}(M_0)\oplus H_1^{orb}(M_1)$. Hence
$\ker(H_1^{orb}(M_0)\rightarrow H_1^{orb}(M))$ is equal to the
intersection of $H_1^{orb}(M_0)$ with $\Z\oplus\Z$, by using again
that the rational homology of $M_0$ is just as the rational homology
of manifold, it follows that $H_1^{orb}(M_0)$ is rank one. Therefore
$\ker(H_1^{orb}(M_0)\rightarrow H_1^{orb}(M))=\Z$ and is generated by
a class in the boundary of $M_0$. 

Let $Q_0\in H_1(M_0)$ be a representative of the homology class of a
section of the fibration of $T^2$, and let $F_0\in H_1(M_0)$ be a
representative of 
the class of the fibers of the Seifert fibered piece corresponding to
the node $v_0$ in $M_0$. Then some
homology class $T^2$ given by $r_0'Q_0+s_0F_0$ is the class that
gets killed when we glue in $M_1$, so it represents the generator of
$\ker(H_1^{orb}(M_0)\rightarrow H_1^{orb}(M))$.

 We have that
$\num{H_1^{orb}(M_1)/\langle F_0\rangle}=r_0$ by the definition of splice
diagram, since $H_1^{orb}(M_1)/\langle F_0\rangle= H_1^{orb}(M_1/F_0)$. Let 
$\num{H_1^{orb}(M_1)/\langle F_0,Q_0\rangle}=d_1$. Then, since $H_1^{orb}(M_1)/\langle
F_0,Q_0\rangle= H_1^{orb}(M_1/\partial)$, $d_1$ is equal to the ideal
generator defined by Neumann and Wahl in \cite{neumannandwahl2}, which
is an invariant of the splice diagram. This is
their theorem 12.9, whose proof also
works for graph orbifolds. This implies that the order of
$Q_0$ in $H_1^{orb}(M_1)/\langle F_0\rangle$ is $r_0/d_1$. Since
$r_0'Q_0+s_0F_0=0$ in $H_1^{orb}(M_1)$ we get that $r_0'Q_0=0$ in
$H_1^{orb}(M_1)/\langle F_0\rangle$, so $r_0'\mid r_0/d_1$.

We also have the following map of exact sequences 
\begin{align*}
\xymatrix@=5mm{
0\ar[r]&\Z\langle
F_0\rangle\ar[r]&H_1^{orb}(M_1)\ar[r]& H_1^{orb}(M_1)/\langle
F_0\rangle\ar[r] &0 \\
0\ar[r]&\Z\langle
F_0\rangle\ar[r]\ar[u]^\cong &(\Z\times\Z)/\langle
r_0'Q_0+s_0F_0\rangle\ar[r]\ar[u]&
\Z/(r_0')\ar[r]\ar[u] &0 
}
\end{align*}
since the left map is an isomorphism and middle map is injective, it
follows that the right map is injective too, hence
$r'_0=r_0/d_1$. Also note that
$H_1^{orb}(M_1,\partial)=H_1^{orb}(M,M_0)=H_1^{orb}(M)/\im\big(H_1^{orb}(M_0)\to
H_1^{orb}(M)\big)$, so by taking the order of the groups, we get that
$d_1=d/\num{\im\big(H_1^{orb}(M_0)\to H_1^{orb}(M)\big)}$, and since
$H_1^{orb}(M_0')=\im\big(H_1^{orb}(M_0)\to H_1^{orb}(M)\big)$, one
gets that $\num{H_1^{orb}(M_0')}=d/d_1$.

Now for the other edge weights which see the edge we are cutting
along, we start by determining the edge weight on an edge which connects to
our chosen node. Since the fiber intersection number corresponding to
the edge is the same in $M$ and $M_0'$, we get by \ref{nedgedeterminanteq}
the following equations
\begin{align}
\frac{\num{D}}{|H_1^{orb}(M)|}=\frac{\num{D'}}{|H_1^{orb}(M'_0)|}
\end{align}
where $D$ is the edge determinant corresponding to the edge in $M$ and
$D'$ the edge determinant corresponding to the edge in $M'$. Since
$\num{H_1^{orb}(M'_0)}=\num{H_1^{orb}(M)}/d_1$ by the above
calculation, we get that $|D'|=|D|/d_1$. Since
$r_0'=r_0/d_1$, the definition of edge determinants now gives that the
changed edge weight on the edge we are looking at has also been
divided by $d_1$. Continuing
inductively we see that all edge weight that sees the edge are divided
by $d_1$.

 Note that if the curve we kill when we glue in
the solid torus in 
$M_0$ is a multiple of a primitive element, then we get a orbifold
curve in the glued in torus. Now this is not a problem if the weight
on the edge is non zero, so let us consider the case when the edge
weight $r_0=0$. The curve we kill is the curve that bounds in $M_1$,
but $r_0=0$ implies that a fiber in $\partial M_0$ bounds in $M_1$, so
the curve we kill is a fiber. But fibers are primitive elements, so we do not get a
orbifold curve in this case. This insures that $M_1'$ satisfies the
hypothesis of the theorem. 

By a similar argument $r_1'=r_1/d_0$ where
$d_0=\num{H_1^{orb}(M_0,\partial)}=\num{H_1(M,M_1)}$. 

All this implies that the splice diagram $\Gamma_0$ of $M_0'$ look likes 

$$\splicediag{8}{30}{
  &&\\
  \Vdots&\overtag\Circ {v_0} {8pt}\lineto[ul]_(.5){n_{01}}
  \lineto[dl]^(.5){n_{0k_0}}
  \lineto[r]^(.5){r_0/d_1}& \Circ\\
  &&\hbox to 0 pt{~.\hss} }$$
 and the splice diagram $\Gamma_1$ for $M_1'$ is
$$\splicediag{8}{30}{
  &&\\
  \Circ \lineto[r]^(.5){r_1/d_0}& \overtag\Circ{v_1}{8pt}
  \lineto[ur]^(.5){n_{11}}
  \lineto[dr]_(.5){n_{1k_1}}&\Vdots\\
  &&\hbox to 0 pt{~.\hss} }$$
Where all edge weights which see the  cut
edge are gotten from the old
weight by dividing by $d_1$ in the first case and $d_0$ in the second case.

Since $d_0$ and $d_1$ are the ideal generators defined in 12.8 of
\cite{neumannandwahl2}, they are completely determined by $\Gamma$,
since they are generators of ideals defined by numbers coming from
the decorations of $\Gamma$. Now the splice diagrams $\Gamma_0$ and
$\Gamma_1$ have at most $n-1$ nodes each, since we get them by removing at
least one node of $\Gamma$, hence by induction $\widetilde{M}_0'$ is
determined by $\Gamma_0$ and $\widetilde{M}_1'$ are
determined by $\Gamma_1$. Since $\Gamma_0$ and
$\Gamma_1$ are determined by $\Gamma$, this implies that
$\widetilde{M}_0'$ and $\widetilde{M}_1'$ are
determined by $\Gamma$, and therefore by the construction of $M_0'$
and $M_1'$, we have that $\widetilde{M}_0$ and $\widetilde{M}_1$ is
determined by $\Gamma$. 
Next we want to see that the splice diagram determines the number of components of $\pi\inv(M_0)$
and  $\pi\inv(M_1)$ and which components are glued to which. The group of permutations  of the
components of $\pi\inv(M_i)$ is given by 
$H_1^{orb}(M)/\im(H_1^{orb}(M_i)\to H_1^{orb}(M))$. But $H_1^{orb}(M)/\im(H_1^{orb}(M_i)\to
H_1^{orb}(M))= H_1^{orb}(M,M_i)$, so the number of components of $\pi\inv(M_i)$
is the order of $H_1^{orb}(M,M_i)$, which we have seen before is $d_i$, and
only depends on the splice diagram.

To see which components are glued together, we notice that all these
gluings are along tori in $\pi\inv(T^2)$, so the gluing are specified
by the group of permutations of the components of $\pi\inv(T^2)$. So
we look at $H_1^{orb}(M)/\im(H_1^{orb}(T^2)\to H_1(M))$, which is the same as
$H_1^{orb}(M,T^2)$. By excision this is $H_1^{orb}(M/T^2)$. Now
$M/T^2=M_1/T^2\vee M_2/T^2$, so the group of permutations of
$\pi\inv(T^2)$, is given by $H_1^{orb}(M_1,T^2)\oplus H_1^{orb}(M_2,T^2)$.     
Since the group of permutations of the tori is just the product of the
permutations of the group of permutations of the components of each
sides, it follows that a component on the one side is glued to each
component on the other side exactly once. 

The last last thing to see is that the gluing of a component of
$\pi\inv(M_0)$ to a component of $\pi\inv(M_1)$ is specified by the
splice diagram. Let $\widetilde{M}_0$ and $\widetilde{M}_1$ be
components on each of the sides.  To specify the gluing, we show that the splice
diagram determines two distinct essential curves up to homotopy in
each component $T^2_{0i}$ of $\partial \widetilde{M}_0$ and in each
components $T^2_{1j}$ of $\partial \widetilde{M}_1$. The curves are
the same in each of the $T^2_{0i}$'s and likewise in each of the
$T^2_{1i}$'s. If the curves in $T^2_{0i}$ are called $F_0,C_0$ and in
$T^2_{1j}$ the curves are $F_1,C_1$, then when a $T^2_{0i}$ is glued
to a $T^2_{1j}$, $F_0$ is identified with $C_1$ and $C_0$ is
identified with $F_1$.

Now $\widetilde{M}_0$ and $\widetilde{M_1}$ are the
total spaces of graph
orbifolds with boundary completely determined by $\Gamma$, hence we
have naturally defined fibers $\widetilde{F}_i$ by the restriction of
$\pi\inv(F_i)$, where $F_i$ are fibers in the boundary of $M_i$. So
these fibers are going to be one of our curves on each side. The other
curve in $T^2_{ij}$ is then the curve the fiber from the other side are
going to be identified with. So we need to find this curve.

Let $\widetilde{N}_i\subset\widetilde{M}_i$ be the Seifert fibered piece sitting
over the $S^1$ fibered piece of $N_i\subset M_i$ corresponding to the node
$v_i$. Now $\widetilde{N}_i$ is a piece of the JSJ decomposition of
$\widetilde{M}$ and therefore has a rational euler number
$\widetilde{e}_i$. This is the rational euler number of the closed
Seifert fibered manifold one gets by gluing solid tori in $\partial
\widetilde{N}_i$, specified by the gluings of $\widetilde{N}_i$ to
the other pieces of the JSJ decomposition as described in beginning of
Section \ref{det}. We are going compute $\widetilde{e}_i$, and then
use this to specify the other curve in each $T^2_{ij}$. To do this
we can assume by induction, that a simple closed curve transverse to
the fibration is determined in all the boundary
component of $\widetilde{N}_i$ except the boundary components
lying over the edge between $v_0$ and $v_1$ are specified. But the
fibers and the simple closed curves in all the boundary components
lying over the 
edge between $v_0$ and $v_1$, will be the same. Since we know
$\widetilde{F}_i$ the rational euler number $\widetilde{e}_i$ of
$\widetilde{N}_i$ determines a simple closed curve transverse 
to the fibration in each of the boundary components of $\widetilde{M}_i$.

To compute $\widetilde{e}_0$ we will use the relation between
$\widetilde{e}_0$ and the
rational euler number of the $S^1$ fibered piece in the base $N_0$, i.e.,
the $e_{v_0}$. This relation is given by theorem 3.3 of
\cite{brandies}, and give in our situation that
$\widetilde{e}_0=\frac{b_0}{f_0}e_{v_0}$, where $b_0$ is the degree of
the covering 
map restricted to the base of the Seifert fibered pieces and $f_0$ is
the degree of $\pi$ restricted to the fibers. Now
$\deg(\pi\lvert_{\widetilde{M}_0})=b_0f_0$ and
$\deg(\pi\lvert_{\widetilde{M}_0})=\frac{d}{d_0}$ since $d_0$ is the
number of component of $\pi\inv(M_0)$. This implies that
$\widetilde{e}_0=\frac{d}{d_0f_0^2}e_{v_0}$. 

To calculate $f_0$ notice that $f_0=\num{\im(H_1^{orb}(F_0)\to
  H_1^{orb}(M))}$, and since 
\begin{align*}
 \num{H_1^{orb}(M)/\im(H_1^{orb}(F_0)\to
   H_1^{orb}(M))}=\num{H_1^{orb}(M)}/\num{\im(H_1^{orb}(F_0)\to  H_1^{orb}(M))}
\end{align*}
 we get that $f_0=\num{H_1^{orb}(M)}/ \num{H_1^{orb}(M)/\im(H_1^{orb}(F_0)\to
   H_1^{orb}(M))}$, so we want to calculate
 $\num{H_1^{orb}(M)/\im(H_1^{orb}(F_0)\to H_1^{orb}(M))}$.  


Now 
$H_1^{orb}(M)/\im(H_1^{orb}(F_0)\to H_1^{orb}(M))$ is is the same as
$H_1^{orb}(M,F_0)=H_1^{orb}(M/F_0)$, so we need to find
$\num{H_1^{orb}(M/F_0)}$. $M/F_0$ 
  is $M$ with the fibers collapsed at the piece $N_0$; this is the
  same as gluing in  disks in the fibers of
  $N_0$. This then implies that we glue a disk to a simple
  closed curve transverse to the fibration of all the 
  $S^1$--fibered pieces glued to $N_0$ along a torus. This
  implies that $H_1^{orb}(M/F_0)=\big( A_{1}\times A_{2}\times \cdots
  A_{k_0}\times G\big)/\ (a_{1},a_{2},\dots,a_{k_0},g)$,
  where the group $A_{j}$ 
  is the first orbifold homology group of the graph orbifold one gets by
  cutting along a torus corresponding to the $0j$'th edge between the $N_0$
  and the $j$'th piece and gluing in a solid torus. In particular
  $\num{A_{j}}=n_{0j}$. The group $G$ is the first orbifold homology group one
  gets by cutting along the edge between $v_0$ and $v_1$ and gluing in a
  solid torus in the part not containing $v_0$. Hence $\num{G}=r_0$. 
  The $a_{j}\in A_{j}$ are the elements that corresponds to the
  singular fibers over the disk we just glued in; in particular
  $A_{j}/\langle a_{j}\rangle$ is the first orbifold homology group of $M$
  with everything except the part across the edge
  $0j$ collapsed.  Thus $\num{A_{j}/\langle
    a_{j}\rangle}$ is the 
  ideal generator $d_{0j}$ corresponding to the edge $0j$. The same holds for
  $g$ and $G$, especially that $\num{G/\langle g\rangle}=d_{1}$.

 If none of the $A_{j}$ and $G$ is infinite, i.e.,  $n_{0j},r_0\neq
 0$, then
 $\num{H_1^{orb}(M/F_0)}=(r_0\prod_{j=0}^{k_0}n_{0j})/\lcm(n_{01}/d_{01},
 \dots,n_{0k_0}/d_{0k_0},r_0/d_{1})$. 
 Assume that $A_{l}$ is infinite, then all the other groups are
 finite, and we have the following exact sequence
 \begin{align*}
0\rightarrow G\times \prod_{\substack{j=1\\ j\neq l}}^{k_0}A_{j} \rightarrow
\big( A_{1}\times \cdots A_{k_0}\times G\big)/\
(a_{1},\dots,a_{k_0},g)\rightarrow A_{l}/\langle a_{l}\rangle
\rightarrow 0 
\end{align*}
where the $\big( A_{1}\times \cdots A_{k_0}\times G\big)/\
(a_{1},\dots,a_{k},g)\rightarrow A_{l}/\langle
a_{l}\rangle$  map is projection, and $G\prod_{\substack{j=1\\ j\neq
    l}}^{k_0}A_{j}$ is the kernel of this map. This implies that in
this case $\num{H_1^{orb}(M/F_0)}=d_{0l}r_0 \prod_{\substack{j=1\\
    j\neq l}}^{k_0}n_{0j}$. If $G$ is infinite we in the same way get
that that $\num{H_1^{orb}(M/F_0)}=d_{1} \prod_{j=1}^{k_0}n_{0j}$. In
all cases we see that $\num{H_1^{orb}(M/F_0)}=\lambda_0$ only depends
on the splice diagram, since the ideal generators only depends on the splice
diagram. 
 

So now we get that $f_0=\num{H_1^{orb}(M)}/ \num{H_1^{orb}(M/
  F_0)}=\frac{d}{\lambda_0}$, hence
$\widetilde{e}_0=\frac{\lambda_0}{d_0d}e_{v_0}$. 
But the value of $e_{v_0}$ is given by proposition \ref{ceulernumber}, and 
the formula given there shows that $e_{v_0}$ is $d$ times a number
given by the splice diagram, hence $\frac{\lambda_0}{d_0d}e_{v_0}$ is
$\frac{\lambda_0}{d_0}$ times a number only depending on $\Gamma$, hence
$\widetilde{e}_0$ only depends on $\Gamma$. We can in the same way
calculate $\widetilde{e}_1$ and see that it also only depends on $\Gamma$.
This implies that the splice diagram
specifies a simple closed curve $C_i$ transverse to the fibration of the
$T^2_{ij}$'s, which in particular is not null homologous.

Now the gluing of $\widetilde{M}_0$ to $\widetilde{M}_1$ is specified
by identifying $\widetilde{F}_0$ with $C_1$ and $\widetilde{F}_1$ with
$C_0$. But since the $\widetilde{F}_i$'s and the $C_i$'s are determined
by $\Gamma$, the gluing is determined by $\Gamma$, and hence
$\widetilde{M}$ is determined by $\Gamma$. 
\end{proof}

\begin{cor}
Let $M$ be a rational homology sphere graph manifold with splice
diagram $\Gamma(M)$, such that around any node in $\Gamma(M)$ the
edge weights are pairwise coprime, then the universal abelian cover of
$M$ is an integer homology sphere.
\end{cor}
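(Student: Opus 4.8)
The plan is to re-run the inductive construction of the universal abelian cover from the proof of Theorem \ref{universalabcover}, checking that the pairwise coprimality hypothesis makes every piece produced along the way an integer homology sphere. I would induct on the number of nodes of $\Gamma$.

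When $\Gamma$ has at most one node, $M$ is Seifert fibered, and by \cite{neumann832} its universal abelian cover is, up to orientation, the Brieskorn complete intersection $\Sigma(\alpha_1,\dots,\alpha_n)$, whose exponents $\alpha_1,\dots,\alpha_n$ are the leaf weights, i.e.\ exactly the edge weights around the unique node, hence pairwise coprime by hypothesis. Since a Brieskorn complete intersection $\Sigma(\alpha_1,\dots,\alpha_n)$ is an integer homology sphere precisely when its exponents are pairwise coprime, this settles the base case. (A one-node splice diagram of a graph \emph{manifold} has no zero weights, so the degenerate connected-sum-of-lens-spaces situation, which would anyway produce $S^3$, does not occur here.)

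For the inductive step I would cut $M$ along the torus of an edge $\overline{v_0v_1}$ and cap off as in the proof of Theorem \ref{universalabcover}, producing closed graph orbifolds $M_0',M_1'$ whose splice diagrams $\Gamma_0,\Gamma_1$ are obtained from $\Gamma$ by dividing each edge weight that sees the cut edge by one of the two Neumann--Wahl ideal generators $d_0,d_1$, which were identified there with $\num{H_1^{orb}(M_i,\partial M_i)}$. The first step — and, I expect, the main obstacle — is to show the hypothesis forces $d_0=d_1=1$. Since $M$ is a manifold, $H_1^{orb}(M_i,\partial M_i)=H_1(M_i,\partial M_i)=\operatorname{Tors}H_1(M_i)$, and $M_i$ is a graph manifold with one torus boundary corresponding to a rooted subtree of $\Gamma$ on which the edge weights around every node are still pairwise coprime; so it suffices to prove such a manifold has torsion-free first homology. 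I would do this by a second induction on the number of nodes of the subtree: for a single node $M_i$ is Seifert fibered over a disc with cone multiplicities the leaf weights $n_1,\dots,n_k$, so $H_1(M_i)=\langle q_1,\dots,q_k,h\mid n_jq_j+\beta_jh\ (1\le j\le k)\rangle$, and a Smith normal form computation shows its torsion is a greatest common divisor of products of the $n_j$, hence trivial since they are pairwise coprime; the inductive step glues such a piece to branches with torsion-free $H_1$ along tori and is a routine Mayer--Vietoris argument. (Equivalently, this is the statement that a splice diagram with pairwise coprime edge weights at each node has all Neumann--Wahl ideal generators equal to $1$; cf.\ \cite{neumannandwahl2}.)

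Granting $d_0=d_1=1$, the weights of $\Gamma_0$ and $\Gamma_1$ now coincide with those of $\Gamma$ everywhere, so both again have pairwise coprime edge weights at every node; the curve killed by each capping solid torus is primitive (its multiplicity divides $d_i$), so no orbifold curves appear and $M_0',M_1'$ are honest graph manifolds with strictly fewer nodes, to which the outer induction applies; and the coverings $\pi^{-1}(M_i)\to M_i$ are connected, so $\widetilde M=\widetilde M_0\cup_{T^2}\widetilde M_1$ with each $\widetilde M_i$ a single boundary-torus piece of the integer homology sphere $\widetilde M_i'$, i.e.\ the exterior of a knot $K_i$ in $\widetilde M_i'$, so $H_1(\widetilde M_i)\cong\Z$. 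It then remains to assemble $\widetilde M$ from $\widetilde M_0$ and $\widetilde M_1$ exactly as in the proof of Theorem \ref{universalabcover} and to compute $H_1(\widetilde M)$ by Mayer--Vietoris; here the simplification $d_0=d_1=1$, together with the fact that in the rational homology sphere $M$ the two rationally null slopes on the torus $\overline{v_0v_1}$ are distinct (a property which lifts to $\widetilde M$), forces the relevant gluing map $H_1(T^2)\to H_1(\widetilde M_0)\oplus H_1(\widetilde M_1)$ to be an isomorphism, giving $H_1(\widetilde M)=0$. As $\widetilde M$ is a closed oriented $3$-manifold with vanishing first homology, it is an integer homology sphere.
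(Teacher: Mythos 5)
Your strategy is workable in outline but it both misses the short proof and leaves a genuine gap at its last step. The paper's own argument is two lines: by \cite{EisenbudNeumann}, a splice diagram with pairwise coprime edge weights at every node is realized by an integer homology sphere $M'$; since $H_1(M')=0$, $M'$ is its own universal abelian cover; and Theorem \ref{universalabcover} then gives that the universal abelian cover of $M$ is homeomorphic to that of $M'$, i.e.\ to $M'$ itself. All the work you propose to redo by hand (ideal generators equal to $1$, connectivity of the preimages, the splice gluing upstairs) is thereby outsourced to the Eisenbud--Neumann realization theorem together with the already-proved main theorem.

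The gap in your version is the final Mayer--Vietoris step: you claim the map $H_1(T^2)\to H_1(\widetilde M_0)\oplus H_1(\widetilde M_1)\cong\Z\oplus\Z$ is an isomorphism because the two rationally null slopes are distinct. Distinctness only gives injectivity. Each map $H_1(T^2)\to H_1(\widetilde M_i)\cong\Z$ is the surjection whose kernel is generated by the (primitive) longitude $\lambda_i$ of the knot exterior $\widetilde M_i$, so the cokernel of the combined map has order $\num{\lambda_0\cdot\lambda_1}$, the intersection number of the two longitudes in the lifted torus; your argument therefore only yields that $H_1(\widetilde M)$ is finite of that order, not that it vanishes. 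To finish you must show $\lambda_0\cdot\lambda_1=\pm1$, i.e.\ that the lifted gluing is an honest splice (meridian to longitude on each side), and that is essentially the substantive content of the statement rather than a formality. A similar issue lurks in the ``routine Mayer--Vietoris'' step of your sub-lemma that $d_0=d_1=1$: torsion in $H_1$ of a graph manifold assembled from torsion-free pieces arises exactly from the gluing matrices at the separating tori, so the inductive step there also requires the explicit presentation, not just torsion-freeness of the branches. Both points can be repaired --- this is in effect Eisenbud and Neumann's computation --- but as written the proof does not close.
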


\begin{proof}
It is shown in \cite{EisenbudNeumann} that a splice diagram with
pairwise coprime edge weights at nodes is the
splice diagram for an integer homology sphere.
 So given a splice
diagram satisfying the assumption there is a integer homology sphere
$M'$ with splice diagram $\Gamma(M)$, hence $M'$ is the universal
abelian cover of $M$ by \ref{universalabcover}. 
\end{proof}   

This actually gives a way to construct the universal abelian cover for any
rational homology sphere graph manifold  that
has a splice diagram with 
pairwise coprime edge weights at nodes, since \cite{EisenbudNeumann}
describes how to construct the integral homology sphere by splicing,
and to construct a plumbing diagram for it in the case where we have
positive decorations at nodes. If we want a construction of the
plumbing diagram in the case with
negative decorations at nodes it follows from theorem 3.1 in
\cite{bilinearforms} which give us that given a splice diagram
$\Gamma(M)$ as above, one can construct an unimodular tree
$\Delta(M)$, which will be the plumbing diagram.

\newpage

\bibliography{splicbibliografi}

\end{document}